\numberwithin{equation}{section}
\numberwithin{figure}{section}
\theoremstyle{plain}
\newtheorem{thm}{\protect\theoremname}
\theoremstyle{plain}
\newtheorem{prop}[thm]{\protect\propositionname}
\theoremstyle{remark}
\newtheorem{rem}[thm]{\protect\remarkname}
\theoremstyle{plain}
\newtheorem{cor}[thm]{\protect\corollaryname}
\theoremstyle{definition}
\newtheorem{example}[thm]{\protect\examplename}
\providecommand{\corollaryname}{Corollary}
\providecommand{\examplename}{Example}
\providecommand{\propositionname}{Proposition}
\providecommand{\remarkname}{Remark}
\providecommand{\theoremname}{Theorem}
\begin{document}
\title[Bochner Laplacian and semipositive Bergman kernel]{Bochner Laplacian and Bergman kernel expansion of semipositive line
bundles on a Riemann surface}
\author{George Marinescu}
\address{Universit{\"a}t zu K{\"o}ln, Mathematisches Institut, 
Weyertal 86-90, 50931 K{\"o}ln, 
Deutschland    \newline
\mbox{\quad}\,Institute of Mathematics `Simion Stoilow', 
Romanian Academy, Bucharest, Romania}
\email{gmarines@math.uni-koeln.de}
\author{Nikhil Savale}
\thanks{G. M. and N. S. 
are partially supported by the DFG funded project SFB/TRR 191 
`Symplectic Structures in Geometry, Algebra and Dynamics' 
(Project-ID 281071066-TRR 191)
and the ANR-DFG project `Quantization, 
Singularities, and Holomorphic Dynamics' (Project-ID 490843120).
}
\address{Universität zu Köln, Mathematisches Institut, Weyertal 86-90, 50931
Köln, Germany}
\email{nsavale@math.uni-koeln.de}
\subjclass[2000]{53C17, 58J50, 32A25, 53D50}
\begin{abstract}
We generalize the results of Montgomery \cite{Montgomery-95paper}
for the Bochner Laplacian on high tensor powers of a line bundle.
When specialized to Riemann surfaces, this leads to the Bergman kernel
expansion for semipositive line bundles whose curvature vanishes at
finite order. The proof exploits the relation of the Bochner Laplacian
on tensor powers with the sub-Riemannian (sR) Laplacian.
\end{abstract}

\maketitle

\section{Introduction}

In \cite{Montgomery-95paper} Montgomery studied the spectrum, and
in particular the smallest eigenvalue, of the Bochner (magnetic) Laplacian
on the tensor powers $L^{k}\coloneqq L^{\otimes k}$ of a Hermitian
line bundle $L$. He assumed that the underlying manifold is a Riemann
surface and that the curvature of the line bundle vanishes transversally
along a curve. The problem goes back at least to the work Simon et
al. \cite{Avron-Herbst-Simon78,Simon-83} and Guillemin-Uribe \cite{Guillemin-Uribe88}
among others, who assumed the curvature is symplectic. The problem
has since also been actively explored under different assumptions
on the curvature. The first theorem in this article proves the most
general such leading asymptotic for the smallest eigenvalue of the
Bochner Laplacian on tensor powers.

The holomorphic analog of the above is the study of the Bergman kernel
of a holomorphic line bundle $L$ on a complex manifold. The Bergman
kernel is the Schwartz kernel of the projector from smooth sections
of $L$ onto holomorphic ones. The analysis of the Bergman kernel
and holomorphic sections associated to tensor powers has important
applications in complex geometry (see \cite{Demailly-published-text,Ma-Marinescu}).
When $L$ is positive, the leading asymptotic for the Bergman kernel
along the diagonal was first proved in \cite{Tian90} and later improved
to a full expansion in \cite{Catlin97-Bergmankernel,Zelditch98-Bergmankernel}
using the Szeg\H{o} kernel parametrix of \cite{Boutet-Sjostrand76}.
Subsequently, a different geometric method for the expansion was developed
in \cite{Dai-Liu-Ma2006-Bergman-kernel,Ma-Marinescu,MaMarinescu2008}
inspired by the analytic localization technique of \cite{Bismut-Lebeau91}.
The problem of the expansion for semipositive line bundles is largely
open. A second objective of this article is to give the first proof
of the Bergman kernel expansion at vanishing points of the curvature.
We achieve this for a semipositive line bundle $L$ on a Riemann surface. 

\subsection{Statement of the main results}

We now state our results more precisely. Let $Y^{n-1}$ be a compact
Riemannian manifold of dimension $n-1$ with complex Hermitian line
bundle $\left(L,h^{L}\right)$ and vector bundle $\left(F,h^{F}\right)$.
We equip these with unitary connections $\nabla^{L}$, $\nabla^{F}$
to obtain the Bochner Laplacian 
\begin{equation}
\Delta_{k}\coloneqq\left(\nabla^{F\otimes L^{k}}\right)^{*}\nabla^{F\otimes L^{k}}:C^{\infty}\left(Y;F\otimes L^{k}\right)\rightarrow C^{\infty}\left(Y;F\otimes L^{k}\right),\quad k\in\mathbb{N},\label{e:BL}
\end{equation}
on tensor powers $F\otimes L^{k}$, where the adjoint is taken with
respect to the natural $L^{2}$ metric. As the above is elliptic,
self-adjoint and positive, one has a complete orthonormal basis $\left\{ \psi_{j}^{k}\right\} _{j=1}^{\infty}$
of $L^{2}\left(Y;F\otimes L^{k}\right)$ consisting of its eigenvectors
$\Delta_{k}\psi_{j}^{k}=\lambda_{j}\left(k\right)\psi_{j}^{k}$, with
eigenvalues $0\leq\lambda_{0}\leq\lambda_{1}\ldots$. Denote by $R^{L}=\left(\nabla^{L}\right)^{2}\in\Omega^{2}\left(Y;i\mathbb{R}\right)$
the purely imaginary curvature form of the unitary connection $\nabla^{L}$.
The order of vanishing of $R^{L}$ at a point $y\in Y$ is now defined\footnote{The reason for this normalization, besides a simplification of resulting
formulas, is the significance of $r_{y}$ as the degree of nonholonomy
of a relevant sR distribution (see Proposition \prettyref{prop: degree of nonholonomy is ord van.}).} 
\begin{equation}
r_{y}-2=\textrm{ord}_{y}\left(R^{L}\right)\coloneqq\min\left\{ l|J^{l}\left(\Lambda^{2}T^{*}Y\right)\ni j_{y}^{l}R^{L}\neq0\right\} ,\quad r_{y}\geq2,\label{eq:order of vanishing curv.}
\end{equation}
where $j^{l}R^{L}$ denotes the $l$th jet of the curvature. We shall
assume that this order of vanishing is finite at all points of the
manifold i.e. 
\begin{equation}
r\coloneqq\max_{y\in Y}r_{y}<\infty.\label{eq: curv vanishes finite order}
\end{equation}
The function $y\mapsto r_{y}$ being upper semi-continuous, gives
a decomposition $Y=\bigcup_{j=2}^{r}Y_{j}$ of the manifold via 
\begin{align}
Y_{j}\coloneqq\left\{ y\in Y|r_{y}=j\right\} \quad\text{with each }\quad & Y_{\leq j}\coloneqq\bigcup_{j'=2}^{j}Y_{j'}\label{eq:Yr def}
\end{align}
being open. Our first theorem is now the following. 
\begin{thm}
\label{thm:First eigenvalue Bochner} Let $(L,h^{L})\to\left(Y,g^{TY}\right)$,
$(F,h^{F})\to\left(Y,g^{TY}\right)$ be Hermitian line and vector
bundles on a compact Riemannian manifold with unitary connections
$\nabla^{L}$, $\nabla^{F}$ . Assuming that the curvature $R^{L}$
vanishes to finite order at all points, with maximal order $r$ \prettyref{eq: curv vanishes finite order},
the first eigenvalue $\lambda_{0}\left(k\right)$ of the Bochner Laplacian
satisfies 
\begin{equation}
\lambda_{0}\left(k\right)\sim Ck^{2/r},\quad\textrm{as }k\rightarrow\infty,\label{eq: estimate first eigenvalue}
\end{equation}
for some positive constant $C$. Moreover, the first eigenfunction
concentrates on $Y_{r}$: 
\begin{equation}
\left|\psi_{0}^{k}\left(y\right)\right|=O\left(k^{-\infty}\right);\;y\in Y_{\leq r-1}.\label{eq: estimate first eigenfunction}
\end{equation}
\end{thm}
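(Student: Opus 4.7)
The plan is to reduce the problem to a sub-Riemannian one, exploiting the footnote's remark that $r_y$ equals the degree of nonholonomy of a natural sR distribution on the circle bundle $X = P(L^*) \to Y$. Sections of $F\otimes L^k$ correspond to weight-$k$ equivariant sections of a pullback bundle on $X$, and under this identification $\Delta_k$ becomes the horizontal (Bochner) Laplacian $\Delta_H$ along $H\subset TX$ restricted to the weight-$k$ isotypic component, modulo lower order terms involving $\nabla^F$ and the subprincipal symbol. Since $R^L$ vanishes to finite order, Frobenius fails and $H$ is bracket-generating with step at most $r$; one can then apply Hörmander-type subelliptic estimates in the form $k^{2/r}\|u\|^2\lesssim \langle \Delta_k u,u\rangle + \|u\|^2$, which yield the lower bound $\lambda_0(k)\geq ck^{2/r}$.

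For the matching upper bound, I would fix a point $y_0 \in Y_r$ and work in normal coordinates in which $\nabla^L$ admits a local potential $A$ whose leading part is a homogeneous polynomial of degree $r-1$ (so that $dA = R^L$ has a nonzero $(r-2)$-jet). Rescaling by $x \mapsto k^{-1/r}x$ makes $k^{-2/r}\Delta_k$ converge to a model polynomial-magnetic Schrödinger operator on $\mathbb{R}^{n-1}$ whose ground state is an honest $L^2$ function. Cutting off this model ground state at scale $k^{-1/r}$ and computing the Rayleigh quotient yields a trial section with $\langle \Delta_k u,u\rangle/\|u\|^2 \leq Ck^{2/r}$, giving \eqref{eq: estimate first eigenvalue}. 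The main technical care is in choosing a gauge in which the Taylor expansion of $A$ is manifestly homogeneous, so that the rescaled operator has a clean limit.

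For the concentration statement \eqref{eq: estimate first eigenfunction}, the idea is a Agmon/IMS-style argument combined with the sharper subelliptic lower bound that holds away from $Y_r$. On $Y_{\leq r-1}$ the step of the distribution $H$ is at most $r-1$, so a local version of the subelliptic estimate above provides a bound $k^{2/(r-1)}\|\chi u\|^2 \lesssim \langle \Delta_k u, \chi^2 u\rangle + \text{lower order}$ for any cutoff $\chi$ supported in $Y_{\leq r-1}$. Testing against $\psi_0^k$, using $\Delta_k\psi_0^k = \lambda_0(k)\psi_0^k = O(k^{2/r})$ and the fact that $k^{2/(r-1)}\gg k^{2/r}$, and iterating an IMS localization at successively finer scales yields $\|\psi_0^k\|_{L^2(Y_{\leq r-1})} = O(k^{-\infty})$; Sobolev embedding combined with elliptic regularity in noncharacteristic directions upgrades this to the pointwise estimate.

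The principal obstacle I expect is the lower bound: the standard Rothschild--Stein lifting and Fefferman--Phong estimates give $k^{2/r}$ up to a universal constant, but extracting the correct constant $C$ uniformly across the stratification $Y = \bigcup_{j=2}^r Y_j$, and proving that the contributions from strata $Y_j$ with $j<r$ are genuinely negligible rather than merely nondominant, requires carefully glued local models. The sub-Riemannian geodesic reformulation ensures that the degree of nonholonomy $r_y$ controls the local scaling exponent exactly, which is what allows the stratified analysis to close.
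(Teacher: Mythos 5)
Your overall strategy matches the paper's in its opening move: pass to the circle bundle $X$, observe that the finite order of vanishing of $R^L$ makes the horizontal distribution bracket-generating of step $r$ (this is Proposition \ref{prop: degree of nonholonomy is ord van.}), and invoke the subelliptic estimate \eqref{eq:subelliptic estimate} restricted to the $k$-isotypic component to get the coarse spectral gap $\lambda_0(k)\geq c_1 k^{2/r}-c_2$ (Proposition \ref{prop: gen. spectral gap}). For the upper bound your trial-function idea is a classical alternative, and for the concentration your Agmon/IMS iteration is a workable route. The paper instead derives all three conclusions from a single pointwise on-diagonal heat-kernel expansion for $e^{-t k^{-2/r}\Delta_k}(y,y)$ (Theorem \ref{thm:Bochner heat kernel expansion}), obtained by rescaling the circle-bundle sR heat kernel and doing a stationary-phase in the fiber variable: the upper bound comes from a ratio $\tfrac{1}{t_2-t_1}\ln(\int e^{-t_1\cdot}/\int e^{-t_2\cdot})$ trick that avoids constructing quasimodes, the concentration \eqref{eq: estimate first eigenfunction} is the one-line consequence $|\psi_0^k(y)|^2\leq e^{\lambda_0(k)/k^{2/r}}\,e^{-k^{-2/r}\Delta_k}(y,y)=O(k^{-\infty})$ on $Y_{\leq r-1}$, and the sharp lower bound comes from a functional-calculus version of the expansion.

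The genuine gap is the one you yourself flag and then leave open: you do \emph{not} obtain the lower bound $\liminf \lambda_0(k)/k^{2/r}\geq C$ with $C=\inf_{y\in Y_r}\lambda_0(\Delta_{g_y^{TY},j^{r-2}R_y^L})$, only $\lambda_0(k)\geq ck^{2/r}$ for some unidentified $c$. Since the statement $\lambda_0(k)\sim Ck^{2/r}$ asserts convergence of $\lambda_0(k)/k^{2/r}$ to the specific constant $C$, and the paper establishes both \eqref{eq:upper bound lambda0} and \eqref{eq:lower bound lambda0}, your argument is incomplete at precisely the hard step. The missing idea is not ``carefully glued local models'' in the vague sense you suggest; it is the observation that if $\varphi\in C_c^\infty(-\infty,C)$, then in the Helffer--Sj\"ostrand representation of $\varphi(k^{-2/r}\Delta_k)(y,y)$ every coefficient $\mathtt{C}_{2j}^z$ (built from resolvents of the model operator $\Delta_{g_y^{TY},j^{r-2}R_y^L}$) is holomorphic in $z$ on $\{\textrm{Re}\,z<C\}$, so the Cauchy integrals vanish and $\varphi(k^{-2/r}\Delta_k)(y,y)=O(k^{-\infty})$ uniformly; then $\varphi(\lambda_0(k)/k^{2/r})\leq\textrm{tr}\,\varphi(k^{-2/r}\Delta_k)=O(k^{-\infty})$ forces $\lambda_0(k)/k^{2/r}\geq C-\varepsilon$ eventually. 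This is what your proposal lacks.

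Two smaller issues. First, your upper-bound construction asserts the model operator's ground state ``is an honest $L^2$ function''; this is only guaranteed under the nondegeneracy hypothesis \eqref{eq:non-degeneracy assumption}, since Proposition \ref{prop:model spectra} gives $\textrm{EssSpec}(\Delta_{g^V,R^V})=\emptyset$ only in that case, and Theorem \ref{thm:First eigenvalue Bochner} makes no such assumption. You would need to replace the exact ground state by an $\varepsilon$-approximate eigenfunction cut off at scale $Rk^{-1/r}$ with $R\to\infty$ after $k\to\infty$ (the paper sidesteps this entirely by never constructing a trial function for this theorem). Second, in your concentration argument the pointwise bound requires $k$-uniform elliptic bootstrapping localized away from $Y_r$, including control of $\|\nabla^{F\otimes L^k}\psi_0^k\|$ on the cutoff supports; this works but takes real estimates, whereas the paper's heat-kernel inequality gives the pointwise bound immediately without any regularity bootstrapping.
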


The leading constant above \prettyref{eq: estimate first eigenvalue}
can be identified 
\begin{equation}
C=\inf_{y\in Y_{r}}\lambda_{0}\left(\Delta_{y}\right)\label{eq: upper constant}
\end{equation}
in terms of the bottom of the spectrum of certain model Laplacians
$\Delta_{y}\coloneqq\Delta_{g_{y}^{TY},j^{r-2}R_{y}^{L}}$, depending
on the metric $g_{y}^{TY}$ and first non-vanishing jet tensors $j^{r-2}R_{y}^{L}$,
defined on the tangent space $T_{y}Y$ at each $y\in Y$ (see \prettyref{sec:Model-operators}).
The first case of the above theorem is $r=2$, when the curvature
$R^{L}$ is non-vanishing, and can be found in \cite{Helffer-Mohamed96}.
Here the model Laplacian is a harmonic oscillator. The bottom of its
spectrum is explicitly given $\lambda_{0}\left(\Delta_{y}\right)=\frac{1}{2}\textrm{tr }\sqrt{-J_{y}^{2}}$
in terms of the endomorphism $J_{y}:T_{y}Y\rightarrow T_{y}Y$, defined
by the equation $g^{TY}\left(.,J_{y}.\right)=R^{L}\left(.,.\right)$.
In \cite{Montgomery-95paper} a particular case of $r=3$, with $Y$
a Riemann surface, is considered. It is surpising that the general
case, despite being attempted, has been missed therein and in several
references since then.

Without further hypotheses, the structure of the locus $Y_{r}$ may
be quite general; locally any closed subset of a hypersurface (see
Section \prettyref{subsec:Structure-of Yr} below). To obtain further
information on the small eigenvalues, we introduce additional assumptions.
First, we assume $Y_{r}=\bigcup_{j=1}^{N}Y_{r,j}$ to be a union of
embedded submanifolds of dimensions $d_{j}\coloneqq\textrm{dim}\left(Y_{r,j}\right)$.
At points $y\in Y_{r}$, the first non-vanishing jet of the curvature
$j_{y}^{r-2}R^{L}\in S^{r-2}T_{y}^{*}Y\otimes\Lambda^{2}T_{y}^{*}Y$
may be thought of as an element of the product with the $\left(r-2\right)$th
symmetric power. We say that the curvature $R^{L}$ vanishes \textit{non-degenerately}
along $Y_{r}$ if the following implication holds
\begin{equation}
i_{v}^{s}\left(j_{y}^{r-2}R^{L}\right)=0,\,\forall s\leq r-2\,\implies v\in T_{y}Y_{r},\label{eq:non-degeneracy assumption}
\end{equation}
where $i^{s}$ above denotes the $s$-fold contraction of the symmetric
part of $j_{y}^{r-2}R^{L}$. In Remark \prettyref{rem:(Non-degeneracy-hypothesis)}
below we note the following less invariant definition of the non-degeneracy
condition \prettyref{eq:non-degeneracy assumption} above in local
coordinates: it is equivalent to assuming that the leading order part
$R_{0}^{L}$ in the Taylor expansion of the curvature at $y\in Y_{r}$
locally defines the same locus $Y_{r}=Y_{r}^{0}\coloneqq\left\{ y\in Y|\textrm{ord}_{y}\left(R_{0}^{L}\right)=r-2\right\} $
as \prettyref{eq:Yr def}. In Remark \prettyref{rem:(Non-degeneracy-hypothesis)}
we also note that our non-degeneracy condition \prettyref{eq:non-degeneracy assumption}
on the curvature is less restrictive than the assumption of $Y_{r}$
being a 'magnetic well' for the curvature $R^{L}$ that appears in
earlier works \cite{Helffer-Mohamed96}.

Now set $d_{j}^{\textrm{max}}\coloneqq\max\left\{ d_{j}\right\} _{j=1}^{N}$
and let $NY_{r,j}\coloneqq TY_{r,j}^{\perp}\subset TY$ denote the
normal bundle of each $Y_{r,j}$. Note that there is a natural density
on each $NY_{r,j}$ coming from the metric. Denote by $\chi_{\left[c_{1},c_{2}\right]}$
the characteristic function for $\left[c_{1},c_{2}\right]$. In Section
\prettyref{subsec:Weyl-laws} we show that under the non-degeneracy
hypothesis \prettyref{eq:non-degeneracy assumption}, the Schwartz
kernel of the model Laplacian on the tangent space $\chi_{\left[c_{1},c_{2}\right]}\left(\Delta_{y}\right)\left(v,v\right)=O\left(\left|v\right|^{-\infty}\right),\:v\in NY_{r,j,y},$
is rapidly decaying, and thus integrable, in the normal directions. 

Our next result is on the asymptotics for the Weyl counting function
$N\left[c_{1}k^{2/r},c_{2}k^{2/r}\right]$ for the number of eigenvalues
of $\Delta_{k}$ in the given interval. 
\begin{thm}
\label{thm: Weyl law =00003D000026 expansion} Let $(L,h^{L})\to\left(Y,g^{TY}\right)$,
$(F,h^{F})\to\left(Y,g^{TY}\right)$ be Hermitian line and vector
bundles on a compact Riemannian manifold with unitary connections
$\nabla^{L}$, $\nabla^{F}$ . Assuming $Y_{r}\subset Y$ \prettyref{eq:Yr def}
to be a union of embedded submanifolds along which the curvature vanishes
non-degenerately \prettyref{eq:non-degeneracy assumption}, the counting
function satisfies the asymptotics 
\begin{equation}
N\left[c_{1}k^{2/r},c_{2}k^{2/r}\right]\sim k^{\frac{d_{j}^{\textrm{max}}}{r}}\sum_{d_{j}=d_{j}^{\textrm{max}}}\int_{NY_{r,j}}\chi_{\left[c_{1},c_{2}\right]}\left(\Delta_{y}\right)\left(v,v\right),\quad\textrm{as }k\rightarrow\infty.\label{eq:Weyl law}
\end{equation}
If further $Y_{r}$ is a finite set of points (or $d_{j}^{\textrm{max}}=0$),
then the smallest eigenvalue of the Bochner Laplacian has a complete
asymptotic expansion\footnote{The same result holds for the $m$th eigenvalue $\lambda_{m}\left(k\right)$
for any fixed $m\in\mathbb{N}_{0}$.} 
\begin{equation}
\lambda_{0}\left(k\right)=k^{2/r}\left[\sum_{l=0}^{N}\lambda_{l}k^{-l/r}+O\left(k^{-\left(2N+1\right)/r}\right)\right],\quad\forall N\in\mathbb{N}_{0},\quad\textrm{as }k\rightarrow\infty.\label{eq:expansion for small eigenvalues}
\end{equation}
\end{thm}

Next, we consider the case when $\left(Y,h^{TY}\right)$ is a complex
Hermitian manifold. The line and vector bundles $\left(L,h^{L}\right)$,
$\left(F,h^{F}\right)$ are then assumed to be holomorphic. Taking
$\nabla^{L},\nabla^{F}$ to be the Chern connections, one also has
the associated Kodaira Laplacian 
\[
\Box_{k}^{q}:\Omega^{0,q}\left(Y;F\otimes L^{k}\right)\rightarrow\Omega^{0,q}\left(Y;F\otimes L^{k}\right),\quad0\leq q\leq m,
\]
acting on tensor powers\footnote{Twisting by an additional bundle $F$ is fairly standard in complex
geometry, for instance one is often required to choose $F$ to be
the canonical bundle (see Proposition \prettyref{prop:Dk spectral estimate}
below).}. The first eigenvalue of the above is typically $0$ with $\ker\Box_{k}^{q}=H^{q}\left(X;F\otimes L^{k}\right)$
being cohomological and corresponding to holomorphic sections. The
Bergman kernel $\Pi_{k}^{q}\left(y,y'\right)$ is the Schwartz kernel
of the orthogonal projector $\Pi_{k}^{q}:\Omega^{0,q}\left(Y;F\otimes L^{k}\right)\rightarrow\ker\Box_{k}^{q}$.
Its value on the diagonal is given
\begin{equation}
\Pi_{k}^{q}\left(y,y\right)=\sum_{j=1}^{N_{k}^{q}}\left|s_{j}\left(y\right)\right|^{2},\quad N_{k}^{q}\coloneqq\dim H^{q}\left(X;F\otimes L^{k}\right),\label{eq:Bergman kernel def.}
\end{equation}
in terms of an orthonormal basis $\left\{ s_{j}\right\} _{j=1}^{N_{k}^{q}}$
of $H^{q}\left(X;F\otimes L^{k}\right)$, and thus controls pointwise
norms of sections in $\ker\Box_{k}^{q}$ in the spirit of \prettyref{eq: estimate first eigenfunction}.
To obtain the asymptotics of $\Pi_{k}^{q}\left(y,y\right)$, we specialize
to the case of Riemann surface ($n-1=2$). Furthermore, in addition
to vanishing at finite order \prettyref{eq: curv vanishes finite order},
the curvature is assumed to be semipositive: $R^{L}\left(w,\bar{w}\right)\geq0$,
for all $w\in T^{1,0}Y$. Under these assumptions one has $H^{1}\left(X;F\otimes L^{k}\right)=0$
for $k$ sufficiently large, with the asymptotics of the Bergman kernel
$\Pi_{k}\coloneqq\Pi_{k}^{0}$ being given by the following. 
\begin{thm}
\label{thm:Bergman kernel expansion} Let $Y$ be a compact Riemann
surface and $(L,h^{L})\to Y$ a semipositive line bundle whose curvature
$R^{L}$ vanishes to finite order at any point. Let $(F,h^{F})\to Y$
be another Hermitian holomorphic vector bundle. Then the Bergman kernel
$\Pi_{k}\coloneqq\Pi_{k}^{0}$ has the pointwise asymptotic expansion
on diagonal 
\begin{equation}
\Pi_{k}\left(y,y\right)=k^{2/r_{y}}\left[\sum_{j=0}^{N}c_{j}\left(y\right)k^{-2j/r_{y}}\right]+O\left(k^{-2N/r_{y}}\right),\quad\forall N\in\mathbb{N}.\label{eq:Bergmankernelexpansion}
\end{equation}
Here $c_{j}$ are sections of $\textrm{End}\left(F\right)$, with
the leading term $c_{0}\left(y\right)>0$ being given in terms of
the Bergman kernel of the model Kodaira Laplacian on the tangent space
at $y$ \prettyref{eq: model Kodaira Laplace}. 
\end{thm}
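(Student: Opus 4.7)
The strategy I propose is the degenerate-curvature analogue of the spectral-gap plus local rescaling method used for the positive case (cf.\ Ma--Marinescu). The first step is to establish a uniform spectral gap for $\Box_k^0$. On a Riemann surface the Bochner--Kodaira identity relates the Kodaira and Bochner Laplacians via $2\Box_k^0 = \Delta_k - k\tau + \mathcal{O}(1)$ and $2\Box_k^1 = \Delta_k + k\tau + \mathcal{O}(1)$, with $\tau = -iR^L(w,\bar w)\geq 0$ by semi-positivity. Combining these with the lower bound $\lambda_0(\Delta_k)\gtrsim k^{2/r}$ from \prettyref{thm:First eigenvalue Bochner}, one obtains $\Box_k^1 \gtrsim k^{2/r}$, hence $H^1(Y; F\otimes L^k) = 0$ for $k\gg 0$, and analogously the non-zero eigenvalues of $\Box_k^0$ are bounded below by $ck^{2/r}$. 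Therefore, for any smooth compactly supported $\phi$ with $\phi\equiv 1$ near $0$ and $\mathrm{supp}\,\phi \subset (-c,c)$, we have $\Pi_k = \phi(k^{-2/r}\Box_k^0)$.

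Next I would fix $y\in Y$ and use the Fourier inversion formula to express $\phi$ as a superposition of wave operators. Finite propagation speed for $e^{it\sqrt{\Box_k^0}}$ then localizes the Schwartz kernel of $\Pi_k$ to an arbitrarily small geodesic ball around $y$ modulo $\mathcal{O}(k^{-\infty})$. In normal coordinates and unitary trivializations of $L$ and $F$ centered at $y$, I perform the $y$-adapted rescaling $u = k^{1/r_y} v$ on $T_y Y \cong \mathbb{C}$. The conjugated operator $k^{-2/r_y}\Box_k^0$ then admits a Taylor expansion in the small parameter $k^{-1/r_y}$ whose zeroth-order term is the model Kodaira Laplacian $\Box^{g_y^{TY}, j_y^{r_y-2}R^L_y, J^{TY}_y}$ on $T_y Y$. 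This model operator is semi-positive and, by the analysis of \prettyref{sec:Model-operators}, its pointwise Bergman kernel at the origin is strictly positive. At the level of cut-offs, one may replace the global $\phi(k^{-2/r}\Box_k^0)$ by the finer $y$-adapted $\phi(k^{-2/r_y}\Box_k^0)$, since any eigenvalue of $\Box_k^0$ below $ck^{2/r}$ is in fact zero.

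Expanding the functional calculus order by order in $k^{-1/r_y}$ --- either by a resolvent contour argument or by matching a formal parametrix --- yields the coefficients $c_j(y)\in\mathrm{End}(F_y)$ of \prettyref{eq:Bergmankernelexpansion}. The Jacobian factor $k^{2/r_y}$ of the rescaling produces the overall prefactor, and the zeroth coefficient is identified as $c_0(y) = \Pi^{g_y^{TY}, j_y^{r_y-2}R^L_y, J^{TY}_y}(0,0)>0$. The remainder estimate $\mathcal{O}(k^{-2N/r_y})$ follows, provided the expansion can be carried to arbitrary order with uniform control on each step.

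The main obstacle is providing hypoelliptic resolvent estimates, uniform in $k$, for the rescaled operator. At the model level the operator is only hypoelliptic, not elliptic, since $R^L$ vanishes at the origin of the rescaled frame to order $r_y - 2$; this is precisely the sub-Riemannian structure highlighted in the abstract. Control of the $k^{-j/r_y}$ terms in the parametrix, and justification that the coupling of the anisotropic rescaling with the global spectral cut-off at scale $k^{2/r}$ introduces only negligible errors when $r_y < r$, will require the sR-type resolvent bounds developed for the Bochner Laplacian in the proof of \prettyref{thm:First eigenvalue Bochner}, combined with the off-diagonal decay of the model Bergman kernel. Once these ingredients are in place, the coefficient identification is a standard formal matching.
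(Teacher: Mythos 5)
Your proposal follows essentially the same route as the paper: spectral gap at scale $k^{2/r}$ (from the Lichnerowicz formula plus Proposition~\ref{prop: gen. spectral gap}), functional-calculus approximation of $\Pi_k$ by $\varphi(D_k)$ with $\hat\varphi$ compactly supported, finite-propagation localization to a modified operator $\tilde\Box_k$ near $y$, the anisotropic rescaling $\delta_{k^{-1/r_y}}$, a Taylor expansion of the rescaled operator in powers of $k^{-1/r_y}$, and a resolvent expansion fed into the Helffer--Sj\"ostrand formula. The key structural point that the \emph{localized} modified operator $\tilde\Box_k$ enjoys the improved spectral gap $\{0\}\cup[c_1 k^{2/r_y}-c_2,\infty)$ (because the cutoff $\tilde R^L$ is arranged to have its $(r_y-2)$-jet non-vanishing everywhere, equation~\eqref{eq: curv =00003D000026 jet comp.}) is the precise mechanism that justifies your observation about passing from the global $k^{2/r}$-scale cutoff to a $y$-adapted $k^{2/r_y}$-scale cutoff.

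There is however a conceptual slip in your final paragraph. You assert that the rescaled operator is ``only hypoelliptic, not elliptic, since $R^L$ vanishes at the origin.'' This is not the case. The Kodaira Laplacian $\Box_k$, its local modification $\tilde\Box_k$, the rescaled operator $\boxdot=k^{-2/r_y}(\delta_{k^{-1/r}})_*\tilde\Box_k$, and the model operator $\boxdot_0=\Box_{g^{TY}_y,j^{r_y-2}_y R^L,J^{TY}_y}$ are \emph{all elliptic} second-order operators with principal symbol $|\xi|^2\,\mathrm{Id}$; the vanishing of the curvature affects the sub-principal/potential terms, the spectral gap, and the shape of the model Bergman kernel, but not ellipticity. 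The paper's resolvent expansion is carried out using ordinary local \emph{elliptic} estimates, not sR subelliptic ones. Sub-Riemannian hypoellipticity enters only on the $S^1$-bundle side (the Bochner Laplacian and the spectral-gap proof). So no new hypoelliptic machinery is needed for the parametrix; standard elliptic bootstrap with the uniform spectral gap for $\tilde\Box_k$ suffices. You should also note the small point that the expansion contains only even powers $k^{-2j/r_y}$ because, under $\delta_{-1}$, the operators $\boxdot_j$ change sign by $(-1)^j$, forcing the on-diagonal coefficients to vanish for odd $j$; your statement of the expansion assumes this but the argument needs to be made.
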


Note that at points where $R^{L}$ is positive one has $r_{y}=2$
and the above expansion recovers the usual Bergman kernel expansion
at these points. The presence of fractional exponents, at points where
the curvature vanishes, given in terms of the order of vanishing,
represents a new feature. It would be desirable to have a more explicit
formula for the leading term $c_{0}$ at vanishing points for the
curvature. The final example \prettyref{exa:(Branched-coverings)}
computes the leading term explicitly in the case of semipositive line
bundles obtained from branched coverings. Finally, we note that unlike
\prettyref{eq: estimate first eigenfunction} the Bergman kernel expansion
\prettyref{eq:Bergmankernelexpansion} does not exhibit any concentration
phenomenon.

\subsection{Background and commentary}

The result of \prettyref{thm:First eigenvalue Bochner} was shown
by Montgomery \cite{Montgomery-95paper} in the case when $Y$ is
a Riemann surface and $R^{L}$ vanishes to first order ($r=3$) along
a curve. The case of non-vanishing curvature ($r=2$), and a special
case of the expansion \prettyref{eq:expansion for small eigenvalues}
for $r\geq2$, can be found in the work of Helffer-Mohamed \cite{Helffer-Mohamed96}.
The problem has since been explored in several further cases. All
such previous works however are more restrictive in dimension, the
curvature $R^{L}$ or the geometry of the manifold and bundles. Our
\prettyref{thm:First eigenvalue Bochner} is the most general leading
asymptotic for the first Bochner eigenvalue. The only assumption is
the finite order of vanishing of the curvature $R^{L}$ and corresponds
to Hörmander's condition on the unit circle of $L$.

The proof here uses the relation of the Bochner Laplacian with the
sub-Riemannian (sR) Laplacian on the unit circle bundle of $L$, this
is a manifestation of the semiclassical/microlocal correspondence
in this context. Asymptotic bounds on the smallest eigenvalue can
be obtained by replacing Guillemin-Uribe's use of the Melin inequality
on the unit circle \cite{Guillemin-Uribe88} by the subelliptic estimate
of Rothschild-Stein \cite{Rothschild-Stein76}. The leading asymptotic
\prettyref{eq: estimate first eigenvalue} however requires understanding
the sharp constant in the subelliptic estimate. Here we instead exploit
a pointwise heat kernel expansion for the sR Laplacian \cite{BenArous-hypheatexpansion,Leandre-hypoelliptic-heat92}
on the circle bundle, this is also consistent with our method for
the other announcements.

The first part of \prettyref{thm: Weyl law =00003D000026 expansion}
is similarly the semiclassical analog of Weyl's law for hypoelliptic
operators of Hörmander-type. The main difficulty here is the non-uniform
nature of the relevant heat kernel expansion which does not immediately
yield heat trace asymptotics. Prior results on hypoelliptic Weyl law's
include the one by Métivier \cite{Metivier-hypspectralfunction} and
the eigenvalue estimates of Fefferman-Phong \cite{Fefferman-Phong83}.

The leading asymptotic of the Bergman kernel for positive line bundles
on a compact complex manifold was first proved in \cite{Tian90} and
later improved to a full expansion in 
\cite{Catlin97-Bergmankernel,Zelditch98-Bergmankernel}
as a consequence of the Boutet de Monvel-Sjöstrand parametrix 
\cite{Boutet-Sjostrand76}.
Our geometric method here is closer to 
\cite{Dai-Liu-Ma2006-Bergman-kernel,MaMarinescu2008}
and we refer to \cite{Ma-Marinescu} for a detailed account of this
technique and its applications. The problem of the expansion for semipositive
line bundles is well-known and largely unresolved, 
see \cite[Problem 4.8]{Ma-Marinescu} or \cite{Dangelo78}
for the analogous problem for weakly pseudoconvex
domains. Our final \prettyref{thm:Bergman kernel expansion} is the
first instance where the expansion has been proved at vanishing points
of the curvature for surfaces, and this is yet unresolved in higher
dimensions. A key step in our proof of \prettyref{thm:Bergman kernel expansion},
although one among several, is Corollary \prettyref{cor: spectral gap Dirac}
below. This gives an $O\left(k^{2/r}\right)$ spectral gap for the
Kodaira Laplacian on tensor powers by combining \prettyref{thm:First eigenvalue Bochner}
with the method of \cite{MaMarinescu2002}. Donnelly has earlier shown
in \cite{Donnelly2003} that the corresponding does not hold in higher
dimensions as a counterexample to Siu's eigenvalue conjecture \cite{Siu84}
(see Remark \prettyref{rem:good remark} below). Despite the counterexample,
the problem of generalizing \prettyref{thm:Bergman kernel expansion}
to higher dimensions remains open, perhaps by circumventing the use
of Corollary \prettyref{cor: spectral gap Dirac}. Previously, \cite{Berndtsson2002}
proved an asymptotic estimate for the Bergman kernel of semipositive
line bundles. In \cite{Berman-2009} the expansion is proved on the
positive part, and away from the augmented base locus, assuming the
line bundle to be ample. In \cite{Hsiao-Marinescu-2014} the expansion
is proved on the positive part when one twists by the canonical bundle
(i.e.\ $F=K_{Y}$). The analogous problem of the boundary expansion
for the Bergman kernel of weakly pseudoconvex domains in $\mathbb{C}^{2}$
has also been recently solved by the second author in \cite{HsiaoSavale-2022},
refining earlier estimates on Bergman kernels from 
\cite{McNeal89,Nagel-Rosay-Stein-Wainger-89}.

The analysis of holomorphic sections and the Bergman kernel for positive
line bundles has several applications, particularly to the Tian-Yau-Donaldson
program in Kähler geometry, Berezin-Toeplitz quantization, holomorphic
torsion and its relation to Arakelov geometry, random holomorphic
sections and the quantum Hall effect (see \cite{Ma-Marinescu} for
these references). Our \prettyref{thm:Bergman kernel expansion} opens
the way to extending these applications of Bergman kernels to the
case of semipositive line bundles, which we plan to explore in a sequel
to this article. 

\subsection{Organization of the article}

The paper is organized as follows. In \prettyref{sec:sub-Riemannian-geometry}
we begin with some standard preliminaries on sub-Riemannian geometry
and the sR Laplacian. In particular \prettyref{subsec:sR-heat-kernel}
gives a proof of the on-diagonal expansion for the sR heat kernel.
In \prettyref{sec:Bochner-Laplacian-on} we specialize to the case
of sR structures on unit circle bundles. Here Section \prettyref{subsec:Smallest-eigenvalue}
proves \prettyref{thm:First eigenvalue Bochner} based on an analogous
heat kernel expansion for the Bochner Laplacian on tensor powers \prettyref{thm:Bochner heat kernel expansion}.
Next Section \prettyref{subsec:Weyl-laws} and Section \prettyref{subsec:First-eigenvalue expansion}
prove the Weyl law and expansion of the first eigenvalue of \prettyref{thm: Weyl law =00003D000026 expansion}
respectively. In Section \prettyref{subsec:Kodaira-Laplacian} we
come to the case of the Kodaira Laplacian on tensors powers of semipositive
line bundles on a Riemann surface. Here we prove the Bergman kernel
expansion \prettyref{thm:Bergman kernel expansion} in Section \prettyref{subsec:Bergman-kernel exp.}. 

\section{\label{sec:sub-Riemannian-geometry}sub-Riemannian geometry}

Sub-Riemannian (sR) geometry is the study of (metric-)distributions
in smooth manifolds. More precisely, let $X^{n}$ be an $n$-dimensional,
compact, oriented differentiable manifold $X$. Let $E^{m}\subset X$
be a rank $m$ subbundle of the tangent bundle which is assumed to
be\textit{ bracket generating:} sections of $E$ generate all sections
of $TX$ under the Lie bracket. The subbundle $E$ is further equipped
with a metric $g^{E}$. We refer to the triple $\left(X,E,g^{E}\right)$
as a sub-Riemannian (sR) structure. Riemannian geometry corresponds
to $E=TX$.

The obvious length function 
$l\left(\gamma\right)\coloneqq\int_{0}^{1}\left|\dot{\gamma}\right|dt$
may be defined on the set of \textit{horizontal} paths of Sobolev
regularity one connecting the two points $x_{0},x_{1}\in X$ as 
\[
\Omega_{E}\left(x_{0},x_{1}\right)
\coloneqq\left\{ \gamma\in H^{1}\left(\left[0,1\right];X\right)|
\gamma\left(0\right)
=x_{0},\,\gamma\left(1\right)=x_{1},
\,\dot{\gamma}\left(t\right)\in E_{\gamma\left(t\right)}\textrm{ a.e.}\right\} .
\]
This also defines the sub-Riemannian distance function via 
\begin{equation}
d^{E}\left(x_{0},x_{1}\right)\coloneqq\inf_{\gamma\in\Omega_{E}\left(x_{0},x_{1}\right)}l\left(\gamma\right).\label{eq:sR distance}
\end{equation}
The Chow--Rashevskii theorem shows that this distance is finite,
or that there exists a horizontal path connecting any two points on
$X$, giving the manifold the structure of a metric space $\left(X,d^{E}\right)$.

The canonical flag of the distribution $E$ at any point $x\in X$
is defined by
\begin{equation}
\underbrace{E_{0}\left(x\right)}_{=\left\{ 0\right\} }\subset\underbrace{E_{1}\left(x\right)}_{=E}\subset\ldots\subset\subsetneq E_{r\left(x\right)}\left(x\right)=TX\label{eq: canonical flag}
\end{equation}
where $E_{j+1}\coloneqq E_{j}+\left[E_{j},E_{j}\right],\;0\leq j\leq r\left(x\right)-1$
denotes the span of the $j$th brackets. The number $r\left(x\right)$
is called the step or degree of nonholonomy of the distribution at
$x$ and in general depends on the point $x\in X$. Furthermore, the
ranks of the subspaces $E_{j}\left(x\right)$ might also might depend
on $x\in X$ and $E_{j}$ need not define a locally trivial vector
bundles. The growth vector, weight vectors and Hausdorff dimension
of the distribution at $x\in X$ are defined via
\begin{align}
m^{E}\left(x\right) & =\Big(\underbrace{m_{0}^{E}}_{\coloneqq0},\underbrace{m_{1}^{E}}_{=m},m_{2}^{E},\ldots,\underbrace{m_{r}^{E}}_{=n}\Big),\quad\textrm{with }\:m_{j}^{E}\left(x\right)\coloneqq\textrm{dim }E_{j}\left(x\right),\label{eq:growth vector}\\
w^{E}\left(x\right) & =\left(w_{1}^{E}\left(x\right),\ldots,w_{n}^{E}\left(x\right)\right)\coloneqq\Big(\underbrace{1,\ldots,1}_{m_{1}^{E}\textrm{ times}},\underbrace{2,\ldots2}_{m_{2}^{E}-m_{1}^{E}\textrm{ times}},\ldots,\underbrace{r,\ldots,r}_{m_{r}^{E}-m_{r-1}^{E}\textrm{ times}}\Big)\label{eq:weight vector}\\
Q\left(x\right) & \coloneqq\sum_{j=1}^{m}j\left(m_{j}^{E}\left(x\right)-m_{j-1}^{E}\left(x\right)\right)=\sum_{j=1}^{n}w_{j}^{E}\left(x\right).\label{eq:Hausdorff dimension}
\end{align}
A point $x\in X$ is called regular if $m_{j}^{E}$'s are locally
constant functions near $x$ or each distribution $E_{j}$ is a locally
trivial vector bundle near $x$. Mitchell's measure theorem shows
that $Q\left(x\right)$ agrees with the Hausdorff dimension of $\left(X,d^{E}\right)$
as a metric space at a regular point $x\in X$. We call the distribution
$E$ \textit{equiregular} if each point $x\in X$ is regular. Hence
in the equiregular case each $E_{j}$ is a subbundle of $TX$ with
$r\left(x\right)$, $m_{j}^{E}\left(x\right)$ and $Q\left(x\right)$
all being constants independent of $x$.

An important notion is that of a privileged coordinate system at $x$.
To define this, fix a set of local orthonormal frame of vector fields
$U_{1},U_{2},\ldots U_{m}$ for $E$ near $x$. The $E-$order $\textrm{ord}_{E,x}\left(f\right)$
of a function $f\in C^{\infty}\left(X\right)$ at a point $x\in X$
is the maximum integer $s\in\mathbb{N}_{0}$ for which $\sum_{j=1}^{m}s_{j}=s$
implies that $\left(U_{1}^{s_{1}}\ldots U_{m}^{s_{m}}f\right)\left(x\right)=0$.
Similarly the $E-$order $\textrm{ord}_{E,x}\left(P\right)$ of a
differential operator $P$ at the point $x\in X$ is the maximum integer
for which $\textrm{ord}_{E,x}\left(Pf\right)\geq\textrm{ord}_{E,x}\left(P\right)+\textrm{ord}_{E,x}\left(f\right)$
holds for each function $f\in C^{\infty}\left(X\right)$. One then
has the obvious relation $\textrm{ord}_{E,x}\left(PQ\right)\geq\textrm{ord}_{E,x}\left(P\right)+\textrm{ord}_{E,x}\left(Q\right)$
for any pair of differential operators $P,Q$. A set of coordinates
$\left(x_{1},\ldots,x_{n}\right)$ near a point $x\in X$ is said
to be \textit{privileged} if each $x_{j}$ has $E$-order $w_{j}^{E}\left(x\right)$
at $x$. A privileged coordinate system always exists near any point
\cite[pg. 36]{Bellaiche-book-1996}. Furthermore, the coordinate system
may be chosen such that each $\frac{\partial}{\partial x_{j}}$ equals
the value of some bracket monomial in the generating vector fields
at $x$. The $E-$order of the monomial $x^{\alpha}$ in privileged
coordinates is clearly $w.\alpha$, while the defining vector fields
$U_{j}$ all have $E-$order $-1$. A basic vector field is one of
the form $x^{\alpha}\partial_{x_{j}}$ for some $j$ and has $E-$order
$w.\alpha-w_{j}$. We may then use a Taylor expansion to write $U_{j}=\sum_{q=-1}^{\infty}\hat{U}_{j}^{\left(q\right)}$
with each vector field $\hat{U}_{j}^{\left(q\right)}$ being a sum
of basic vector fields of $E$-order $q$. If one defines the rescaling/dilation
$\delta_{\varepsilon}x=\left(\varepsilon^{w_{1}}x_{1},\ldots,\varepsilon^{w_{n}}x_{n}\right)$
in privileged coordinates, the vector fields $\hat{U}_{j}^{\left(q\right)}$
are those appearing in the corresponding expansion $\left(\delta_{\varepsilon}\right)_{*}U_{j}=\sum_{q=-1}^{\infty}\varepsilon^{q}\hat{U}_{j}^{\left(q\right)}$
for the defining vector fields. A differential operator $P$ on $\mathbb{R}^{n}$
is said to be $E-$homogeneous of $\textrm{ord}_{E}\left(P\right)$
iff $\left(\delta_{\varepsilon}\right)_{*}P=\varepsilon^{\textrm{ord}_{E}\left(P\right)}P$.
It is clear that the product of two such homogeneous differential
operators $P_{1},P_{2}$ is homogeneous of $\textrm{ord}_{E}\left(P_{1}P_{2}\right)=\textrm{ord}_{E}\left(P_{1}\right)+\textrm{ord}_{E}\left(P_{2}\right)$.
The \textit{nilpotentization} of the sR structure at an arbitrary
$x\in X$ is the sR manifold given via $\hat{X}=\mathbb{R}^{n}$,$\hat{E}\coloneqq\mathbb{R}\left[\hat{U}_{1}^{\left(-1\right)},\ldots,\hat{U}_{m}^{\left(-1\right)}\right]$
with the metric $\hat{g}^{E}$ corresponding to the identification
$\hat{U}_{j}^{\left(-1\right)}\mapsto\left(U_{j}\right)_{x}$. The
nilpotentization $\hat{\mu}$ of a smooth measure $\mu$ at $x$ is
also defined as the leading part $\hat{\mu}=\hat{\mu}^{\left(0\right)}$
under the privileged coordinate expansion $\left(\delta_{\varepsilon}\right)_{*}\mu=\varepsilon^{Q\left(x\right)}\left[\sum_{q=0}^{\infty}\hat{\mu}^{\left(q\right)}\right]$.
These nilpotentizations can be shown to be independent of the choice
of privileged coordinates up to sR isometry \cite[Ch. 5]{Bellaiche-book-1996}. 

\subsection{\label{subsec:sR-Laplacian}sR Laplacian}

Here we define the sub-Riemannian (sR) Laplacian and state its basic
properties. It shall be useful to define it as acting on sections
of an auxiliary complex Hermitian vector bundle of rank $l$ with
connection $\left(F,h^{F},\nabla^{F}\right)$ . To define this first
define the sR-gradient $\nabla^{g^{E},F}s\in C^{\infty}\left(X;E\otimes F\right)$
of a section $s\in C^{\infty}\left(X;F\right)$ by the equation 
\begin{equation}
h^{E,F}\left(\nabla^{g^{E},F}s,v\otimes s'\right)\coloneqq h^{F}\left(\nabla_{v}^{F}s,s'\right),\quad\forall v\in C^{\infty}\left(X;E\right),s'\in C^{\infty}\left(X;F\right),\label{eq:gradient}
\end{equation}
where $h^{E,F}\coloneqq g^{E}\otimes h^{F}$. Next, one defines the
divergence or adjoint of this gradient. In the sR context, the lack
of canonical volume form presents a difficulty in doing so, hence
we shall choose an auxiliary non-vanishing volume form $\mu$. The
divergence $\left(\nabla^{g^{E},F}\right)_{\mu}^{*}\omega\in C^{\infty}\left(X;F\right)$
of a section $\omega\in C^{\infty}\left(X;E\otimes F\right)$ is now
defined to be the formal adjoint satisfying 
\begin{equation}
\int\left\langle \left(\nabla^{g^{E},F}\right)_{\mu}^{*}\omega,s\right\rangle \mu=-\int\left\langle \omega,\nabla^{g^{E},F}s\right\rangle \mu,\quad\forall s\in C^{\infty}\left(X;F\right).\label{eq:divergence}
\end{equation}
The \textbf{sR-Laplacian }acting on sections of $F$ is then defined
by the composition 
\begin{equation}
\Delta_{g^{E},F,\mu}\coloneqq\left(\nabla^{g^{E},F}\right)_{\mu}^{*}\circ\nabla^{g^{E},F}:C^{\infty}\left(X;F\right)\rightarrow C^{\infty}\left(X;F\right).\label{eq:sR Laplacian}
\end{equation}
In terms of a local orthonormal frame $\left\{ U_{j}\right\} _{j=1}^{m}$
for $E$, we have the expression 
\begin{equation}
\Delta_{g^{E},F,\mu}s=\sum_{j=1}^{m}\left[-\left(\nabla_{U_{j}}^{F}\right)^{2}s+\left(\nabla^{g^{E}}U_{j}\right)_{\mu}^{*}\nabla_{U_{j}}^{F}s\right]\label{eq:local expression for Delta}
\end{equation}
with $\left(\nabla^{g^{E}}U_{j}\right)_{\mu}^{*}$ being the divergence
of the vector field $U_{j}$ with respect to $\mu$. Changing the
volume form $\mu$ changes the sR Laplacian \prettyref{eq:sR Laplacian}
by a conjugate, up to a term of order zero.

The sR Laplacian $\Delta_{g^{E},F,\mu}$ is non-negative and self-adjoint
with respect to the obvious inner product $\left\langle s,s'\right\rangle =\int_{X}h^{F}\left(s,s'\right)\mu$,
$s,s'\in C^{\infty}\left(X;F\right)$. Its principal symbol is easily
computed to be the Hamiltonian 
\begin{equation}
\sigma=\sigma\left(\Delta_{g^{E},F,\mu}\right)\left(x,\xi\right)=H^{E}\left(x,\xi\right)=\left|\left.\xi\right|_{E}\right|^{2}\in C^{\infty}\left(T^{*}X\right)\label{eq:symbol sr Laplace-1}
\end{equation}
while its sub-principal symbol is zero. The characteristic variety
\begin{align}
\Sigma_{\Delta_{g^{E},F,\mu}} & =\left\{ \left(x,\xi\right)\in T^{*}X|\sigma\left(\Delta_{g^{E},F,\mu}\right)\left(x,\xi\right)=0\right\} =\left\{ \left(x,\xi\right)|\left.\xi\right|_{E}=0\right\} \eqqcolon E^{\perp}\label{eq:characteristic variety}
\end{align}
is the annihilator of $E$. From the local expression \prettyref{eq:local expression for Delta}
and the bracket generating condition on $E$, the Laplacian $\Delta_{g^{E},F,\mu}$
is seen to be a sum of squares operator of Hörmander type \cite{Hormander1967}.
It is then known to be hypoelliptic and satisfies the following optimal
sub-elliptic estimate \cite{Rothschild-Stein76} with a gain of $\frac{1}{r}$
derivatives 
\begin{equation}
\left\Vert \psi s\right\Vert _{H^{1/r}}^{2}\leq C\left[\left\langle \Delta_{g^{E},F,\mu}\varphi s,\varphi s\right\rangle +\left\Vert \varphi s\right\Vert _{L^{2}}^{2}\right],\quad\forall s\in C^{\infty}\left(X;F\right)\label{eq:local subelliptic estimate}
\end{equation}
for all $\varphi,\psi\in C_{c}^{\infty}\left(X\right)$, with $\varphi=1$
on $\textrm{spt}\left(\psi\right)$, and where $r\coloneqq\sup_{x\in X}\,r\left(x\right)$
is the maximum step size of the distribution. 

Thus on a compact manifold the sR Laplacian has a compact resolvent,
a discrete spectrum of non-negative eigenvalues approaching infinity
and a well-defined heat operator $e^{-t\Delta_{g^{E},F,\mu}}$. 

\subsubsection{\label{subsec:sR-heat-kernel} sR heat kernel}

We shall now describe the asymptotics of the heat kernel $e^{-t\Delta_{g^{E},F,\mu}}.$
One first begins with the finite propagation speed for the sR wave
equation \cite{Melrose-subelliptic-propagation}: the Schwartz kernel
$K_{t}\left(x,y\right)$ of the half-wave operator $e^{it\sqrt{\Delta_{g^{E},F,\mu}}}$
is supported
\begin{equation}
\textrm{spt }K_{t}\subset\left\{ \left(x,y\right)|d^{E}\left(x,y\right)\leq\left|t\right|\right\} \label{eq:finite propagation speed}
\end{equation}
in a $\left|t\right|$ size neighborhood of the diagonal measured
with respect to the sR distance \prettyref{eq:sR distance}. From
this one obtains a localization for the heat kernel. To state it,
fix a Riemannian metric $g^{TX}$ and a privileged coordinate ball
$B_{\varrho}^{g^{TX}}\left(x\right)$, centered at a point $x$, of
radius $\varrho_{x}$ depending on $x$. Let $U_{1},\ldots,U_{m}$
be a local orthonormal frame for $E$ on this ball. Let $\chi\in C_{c}^{\infty}\left(\left[-1,1\right];\left[0,1\right]\right)$
with $\chi=1$ on $\left[-\frac{1}{2},\frac{1}{2}\right]$. Define
the modified measure and vector fields on $\mathbb{R}^{n}$ via
\begin{align}
\tilde{\mu} & =\hat{\mu}+\chi\left(\frac{d^{g^{TX}}\left(x,x'\right)}{\varrho_{x}}\right)\left(\mu-\hat{\mu}\right),\nonumber \\
\tilde{U}_{j} & =U_{j}^{\left(-1\right)}+\chi\left(\frac{d^{g^{TX}}\left(x,x'\right)}{\varrho_{x}}\right)\left(U_{j}-U_{j}^{\left(-1\right)}\right),\quad1\leq j\leq m,\label{eq:modified vector fields}
\end{align}
in terms of the nilpotentization at $x$ given by these privileged
coordinates. These modified vector fields can be seen to be bracket
generating for $\varrho$ sufficiently small. The connection on $F$
can be written $\nabla^{F}=d+A$, in terms of an orthonormal trivialization
for $F$ over the ball, where $A\in\Omega^{1}\left(B_{\varrho}^{g^{TX}}\left(x\right);\mathfrak{u}\left(l\right)\right)$,
$A\left(0\right)=0$, is a one form valued in the Lie algebra $\mathfrak{u}\left(l\right)$
of the unitary group. A modified sR metric $\tilde{g}^{E}$ on $\mathbb{R}^{n}$
is now defined by requiring the vector fields \prettyref{eq:modified vector fields}
to be orthonormal. While a modified connection on the trivial vector
bundle of rank $\textrm{rk}\left(F\right)$ on $\mathbb{R}^{n}$ is
defined as $\tilde{\nabla}^{F}\coloneqq d+\chi\left(\frac{d^{E}\left(x,x'\right)}{\varrho}\right)A$.
A formula similar to \prettyref{eq:local expression for Delta} now
gives an sR Laplacian on $\mathbb{R}^{n}$ via 
\[
\tilde{\Delta}_{g^{E},F,\mu}s=\sum_{j=1}^{m}\left[-\left(\tilde{\nabla}_{\tilde{U}_{j}}^{F}\right)^{2}s+\left(\nabla^{\tilde{g}^{E}}\tilde{U}_{j}\right)_{\tilde{\mu}}^{*}\tilde{\nabla}_{\tilde{U}_{j}}^{F}s\right].
\]
Being semi-bounded from below, it is essentially self-adjoint and
has a well-defined heat kernel on $\mathbb{R}^{n}$ using functional
calculus. Furthermore from the bracket generation of \prettyref{eq:modified vector fields},
it is of Hörmander type and satisfies a local sub-elliptic estimate
\prettyref{eq:local subelliptic estimate}. Next, an application of
finite propagation speed for the wave operator \prettyref{eq:finite propagation speed}
gives localization for the heat kernel for the sR Laplacian. Namely,
there exist constants $\rho_{1,x},C_{x}$ depending on $x$ such that
\begin{align}
e^{-t\Delta_{g^{E},F,\mu}}\left(x,x'\right) & \leq Ct^{-2nr-1}e^{-\frac{d^{E}\left(x,x'\right)^{2}}{4t}}\label{eq:heat exponential decay off-diagonal}\\
e^{-t\Delta_{g^{E},F,\mu}}\left(x,x'\right)-e^{-t\tilde{\Delta}_{g^{E},F,\mu}}\left(x,x'\right) & \leq C_{x}e^{-\frac{\varrho_{1,x}^{2}}{16t}}\label{eq: heat localization on diagonal}
\end{align}
 for $d^{E}\left(x,x'\right)\leq\varrho_{1,x}$ and $t\leq1$. 

We now have the following on diagonal expansion for the sR heat kernel. 
\begin{thm}
\label{thm: BenArousThm} There exist smooth sections $A_{j}\in C^{\infty}\left(X;\textrm{End}\left(F\right)\right)$such
that 
\begin{equation}
\left[e^{-t\Delta_{g^{E},F,\mu}}\right]_{\mu}\left(x,x\right)=\frac{1}{t^{Q\left(x\right)/2}}\left[A_{0}\left(x\right)+A_{1}\left(x\right)t+\ldots+A_{N}\left(x\right)t^{N}+O\left(t^{N}\right)\right]\label{eq:Ben Arous exp.}
\end{equation}
$\forall x\in X$, $N\in\mathbb{N}$. The leading term $A_{0}=\left[e^{-\hat{\Delta}_{\hat{g}^{E},\hat{\mu}}}\right]_{\hat{\mu}}\left(0,0\right)$
is a multiple of the identity and given in terms of the scalar heat
kernel on the nilpotent approximation at $x$. 
\end{thm}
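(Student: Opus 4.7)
The plan is to combine the localization Lemma~\prettyref{lem: Localization lemma} with a rescaling in privileged coordinates and a perturbative analysis of the resulting one-parameter family of Laplacians, closing with a parity argument that kills the half-integer powers of $t$. Fix $x \in X$, choose privileged coordinates at $x$, and set $\epsilon = \sqrt{t}$. By the localization lemma, up to an error $O(e^{-c/t})$ it suffices to analyze the kernel of $e^{-t\tilde{\Delta}_{g^E,F,\mu}}$ at the origin. Conjugating by the weighted dilation $\delta_\epsilon$, the Taylor expansions $(\delta_\epsilon)_*\tilde{U}_j = \sum_{q\geq -1}\epsilon^q \hat{U}_j^{(q)}$ of the generating vector fields, $(\delta_\epsilon)^*\tilde{A} = \sum_{q\geq 1}\epsilon^q \hat{A}^{(q)}$ of the connection form (recall $A(0)=0$), and $(\delta_\epsilon)^*\tilde{\mu} = \epsilon^{Q(x)}\sum_{j\geq 0}\epsilon^j \hat{\mu}^{(j)}$ of the measure combine to yield polynomial-coefficient expansions
\begin{align*}
\epsilon^{2}\,(\delta_\epsilon^{-1})_* \tilde{\Delta}_{g^E,F,\mu}\,(\delta_\epsilon)_* &= \hat{\Delta}_{\hat{g},\hat{\mu}}\otimes\mathrm{Id}_{F_0} + \sum_{j=1}^{N}\epsilon^j P_j + \epsilon^{N+1} R_N^\epsilon,\\
\epsilon^{-Q(x)}(\delta_\epsilon)^* \tilde{\mu} &= \hat{\mu} + \sum_{j=1}^{N}\epsilon^j \hat{\mu}^{(j)} + \epsilon^{N+1}\hat{\mu}_N^\epsilon,
\end{align*}
where each $P_j$ is an $\mathrm{End}(F_0)$-valued differential operator $E$-homogeneous of order $-2+j$. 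Invariance of the heat kernel under change of variable then gives
\[
[e^{-t\tilde{\Delta}_{g^E,F,\mu}}]_{\tilde{\mu}}(x,x) = \epsilon^{-Q(x)}\,[e^{-\hat{\Delta}_\epsilon}]_{\hat{\mu}_\epsilon}(0,0),
\]
with $\hat{\Delta}_\epsilon$ and $\hat{\mu}_\epsilon$ denoting the bracketed expressions.

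The second step is a perturbative expansion of the right-hand side in $\epsilon$. One first establishes a uniform sub-elliptic estimate for the family $\hat{\Delta}_\epsilon$, $\epsilon\in[0,1]$: the rescaled vector fields remain bracket generating with uniform Rothschild--Stein constants in a fixed neighborhood of the origin, since this holds at the nilpotent limit $\epsilon=0$ and is an open condition. Consequently, for $z$ on a suitable contour $\Gamma$ encircling $[0,\infty)$, the resolvent $(z-\hat{\Delta}_\epsilon)^{-1}$ maps $H^s_{\mathrm{loc}}\to H^{s+1/r}_{\mathrm{loc}}$ uniformly in $\epsilon$, and hypoellipticity upgrades its Schwartz kernel to a smooth function on the diagonal with uniform local bounds. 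Writing $e^{-\hat{\Delta}_\epsilon}=\frac{1}{2\pi i}\oint_\Gamma e^{-z}(z-\hat{\Delta}_\epsilon)^{-1}\,dz$ and iterating the Neumann expansion
\[
(z-\hat{\Delta}_\epsilon)^{-1} = (z-\hat{\Delta}_{\hat{g},\hat{\mu}})^{-1} + \sum_{k=1}^{2N}\epsilon^k T_k(z) + \epsilon^{2N+1}\tilde{T}_N^\epsilon(z),
\]
where each $T_k(z)$ is a finite sum of products of $(z-\hat{\Delta}_{\hat{g},\hat{\mu}})^{-1}$ interlaced with the $P_j$'s, evaluating on the diagonal at the origin and incorporating the measure expansion produces
\[
[e^{-\hat{\Delta}_\epsilon}]_{\hat{\mu}_\epsilon}(0,0) = \sum_{j=0}^{2N} \epsilon^j B_j(x) + O(\epsilon^{2N+1}),
\]
with leading term $B_0(x) = [e^{-\hat{\Delta}_{\hat{g},\hat{\mu}}}]_{\hat{\mu}}(0,0)\cdot\mathrm{Id}_{F_0}$, as claimed.

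Finally, to eliminate the half-integer powers of $t$ I would invoke the weighted parity $\sigma = \delta_{-1}: (x_1,\ldots,x_n)\mapsto ((-1)^{w_1}x_1,\ldots,(-1)^{w_n}x_n)$. Since $P_j$ is $E$-homogeneous of order $-2+j$, conjugation by $\sigma$ multiplies it by $(-1)^j$, and likewise $\sigma^*\hat{\mu}^{(j)} = (-1)^j\hat{\mu}^{(j)}$; hence $\sigma_*\hat{\Delta}_\epsilon\sigma_* = \hat{\Delta}_{-\epsilon}$ and $\sigma^*\hat{\mu}_\epsilon = \hat{\mu}_{-\epsilon}$. Because $\sigma$ fixes the origin, this forces the identity $[e^{-\hat{\Delta}_\epsilon}]_{\hat{\mu}_\epsilon}(0,0) = [e^{-\hat{\Delta}_{-\epsilon}}]_{\hat{\mu}_{-\epsilon}}(0,0)$, so only even powers of $\epsilon$ survive, i.e., $B_{2j+1}(x)=0$. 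Setting $A_j(x)=B_{2j}(x)$ and reverting to $t=\epsilon^2$ yields \prettyref{eq:Ben Arous exp.}. The principal obstacle is the uniformity of the sub-elliptic bounds and the smoothness of the iterated kernels $T_k(z)$ near the origin along the family: both require a Rothschild--Stein calibration that deforms continuously as $\epsilon\to 0$ toward the (regular) nilpotent model, which is itself a sum-of-squares operator with bracket-generating generators, so such a calibration is available.
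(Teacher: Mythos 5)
Your proposal follows essentially the same route as the paper: localize via finite propagation, rescale by the weighted dilation $\delta_\varepsilon$ with $t=\varepsilon^2$, Taylor-expand the rescaled operator and measure, derive a resolvent expansion from subelliptic/$L^2$ estimates, convert to pointwise kernel asymptotics via functional calculus, and close with the weighted parity $\delta_{-1}$ to eliminate odd powers. The one technical point you flag but do not resolve --- obtaining $C^0$ control on the coefficient kernels $T_k(z)$, which a naive iteration cannot deliver since each interlaced second-order $P_j$ costs two derivatives against a subelliptic gain of only $1/r$ --- is handled in the paper by expanding instead the $M$-fold regularized resolvent $(\tilde{\Delta}^{\varepsilon}+1)^{-M}(\tilde{\Delta}^{\varepsilon}-z)^{-1}$ for $M$ large, so that the net gain $M/r$ dominates the accumulated loss and Sobolev embedding yields the required $C^0$ Schwartz-kernel bounds before applying Helffer--Sj\"ostrand.
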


\begin{proof}
By \prettyref{eq: heat localization on diagonal} it suffices to demonstrate
the expansion for the localized heat kernel $e^{-t\tilde{\Delta}_{g^{E},F,\mu}}\left(0,0\right)$
at the point $x$. Next, the heat kernel of the rescaled sR-Laplacian
\begin{equation}
\tilde{\Delta}_{g^{E},F,\mu}^{\varepsilon}\coloneqq\varepsilon^{2}\left(\delta_{\varepsilon}\right)_{*}\tilde{\Delta}_{g^{E},F,\mu}\label{eq:rescaled sR Laplacian}
\end{equation}
under the privileged coordinate dilation satisfies 
\begin{equation}
e^{-t\tilde{\Delta}_{g^{E},F,\mu}^{\varepsilon}}\left(x,x'\right)=\varepsilon^{Q\left(x\right)}e^{-t\varepsilon^{2}\tilde{\Delta}_{g^{E},F,\mu}}\left(\delta_{\varepsilon}x,\delta_{\varepsilon}x'\right).\label{eq:coordinate dilation}
\end{equation}
Rearranging and setting $x=x'=0,\,t=1$; gives 
\[
\varepsilon^{-Q\left(x\right)}e^{-\tilde{\Delta}_{g^{E},F,\mu}^{\varepsilon}}\left(0,0\right)=e^{-\varepsilon^{2}\tilde{\Delta}_{g^{E},F,\mu}}\left(0,0\right)
\]
and it suffices to compute the expansion of the left-hand side above
as the dilation $\varepsilon\rightarrow0$. To this end, first note
that the rescaled Laplacian has an expansion under the privileged
coordinate dilation 
\begin{equation}
\tilde{\Delta}_{g^{E},F,\mu}^{\varepsilon}=\left(\sum_{j=0}^{N}\varepsilon^{j}\hat{\Delta}_{g^{E},F,\mu}^{\left(j\right)}\right)+\varepsilon^{N+1}R_{\varepsilon}^{\left(N\right)},\quad\forall N.\label{eq: privileged cord expansion Delta}
\end{equation}
Here each $\hat{\Delta}_{g^{E},F,\mu}^{\left(j\right)}$ is an $\varepsilon$-independent
second-order differential operator on $\mathbb{R}^{n}$ of homogeneous
$E-$order $j-2$. While each $R_{\varepsilon}^{\left(N\right)}$
is an $\varepsilon$-dependent second-order differential operator
on $\mathbb{R}^{n}$ of $E$-order at least $N-1$. The coefficient
functions of $\hat{\Delta}_{g^{E},F,\mu}^{\left(j\right)}$ are polynomials
of degree at most $j+2r$. While those of $R_{\varepsilon}^{\left(N\right)}$
are uniformly $C^{\infty}$-bounded in $\varepsilon$. The first term
is a scalar operator given in terms of the nilpotent approximation
at $x$
\begin{equation}
\hat{\Delta}_{g^{E},F,\mu}^{\left(0\right)}=\Delta_{\hat{g}^{E},\hat{\mu};x}=\sum_{j=1}^{m}\left(\hat{U}_{j}^{\left(-1\right)}\right)^{2}.\label{eq: nilpotent Laplacian Euclidean}
\end{equation}
This expansion \prettyref{eq: privileged cord expansion Delta} along
with the subelliptic estimate \prettyref{eq:local subelliptic estimate}
now gives 
\[
\left(\tilde{\Delta}_{g^{E},F,\mu}^{\varepsilon}-z\right)^{-1}-\left(\hat{\Delta}_{g^{E},F,\mu}^{\left(0\right)}-z\right)^{-1}=O_{H_{\textrm{loc}}^{s}\rightarrow H_{\textrm{loc}}^{s+1/r-2}}\left(\varepsilon\left|\textrm{Im}z\right|^{-2}\right),
\]
$\forall s\in\mathbb{R}$. More generally, we let $I_{j}\coloneqq\left\{ p=\left(p_{0},p_{1},\ldots\right)|p_{\alpha}\in\mathbb{N},\sum p_{\alpha}=j\right\} $
denote the set of partitions of the integer $j$ and define 
\begin{equation}
\mathtt{C}_{j}^{z}\coloneqq\sum_{p\in I_{j}}\left(\hat{\Delta}_{g^{E},F,\mu}^{\left(0\right)}-z\right)^{-1}\left[\prod_{\alpha}\hat{\Delta}_{g^{E},F,\mu}^{\left(p_{\alpha}\right)}\left(\hat{\Delta}_{g^{E},F,\mu}^{\left(0\right)}-z\right)^{-1}\right].\label{eq:resolvent exp. terms}
\end{equation}
Then by repeated applications of the subelliptic estimate we have
\begin{equation}
\left(\tilde{\Delta}_{g^{E},F,\mu}^{\varepsilon}-z\right)^{-1}-\sum_{j=0}^{N}\varepsilon^{j}\mathtt{C}_{j}^{z}=O_{H_{\textrm{loc}}^{s}\rightarrow H_{\textrm{loc}}^{s+N\left(1/r-2\right)}}\left(\varepsilon^{N+1}\left|\textrm{Im}z\right|^{-2Nw_{n}-2}\right),\label{eq:sR resolvent expansion}
\end{equation}
$\forall s\in\mathbb{R}$. A similar expansion as \prettyref{eq: privileged cord expansion Delta}
for $\left(\tilde{\Delta}_{g^{E},F,\mu}^{\varepsilon}+1\right)^{M}\left(\tilde{\Delta}_{g^{E},F,\mu}^{\varepsilon}-z\right)$,
$M\in\mathbb{N}$, also gives 
\begin{equation}
\left(\tilde{\Delta}_{g^{E},F,\mu}^{\varepsilon}+1\right)^{-M}\left(\tilde{\Delta}_{g^{E},F,\mu}^{\varepsilon}-z\right)^{-1}-\sum_{j=0}^{N}\varepsilon^{j}\mathtt{C}_{j,M}^{z}=O_{H_{\textrm{loc}}^{s}\rightarrow H_{\textrm{loc}}^{s+N\left(1/r-2\right)+\frac{M}{r}}}\left(\varepsilon^{N+1}\left|\textrm{Im}z\right|^{-2Nw_{n}-2}\right)\label{eq: regularized expansion}
\end{equation}
where $\mathtt{C}_{j,M}^{z}=O_{H_{\textrm{loc}}^{s}\rightarrow H_{\textrm{loc}}^{s+N\left(1/r-2\right)+\frac{M}{r}}}\left(\varepsilon^{N+1}\left|\textrm{Im}z\right|^{-2Nw_{n}-2}\right)$,
$j=0,\ldots,N$, with 
\[
\mathtt{C}_{0,M}^{z}=\left(\hat{\Delta}_{g^{E},F,\mu}^{\left(0\right)}+1\right)^{-M}\left(\hat{\Delta}_{g^{E},F,\mu}^{\left(0\right)}-z\right)^{-1}.
\]
For $M\gg0$ sufficiently large, Sobolev's inequality gives an expansion
for the corresponding Schwartz kernels of \prettyref{eq: regularized expansion}
in $C^{0}\left(\mathbb{R}^{n}\times\mathbb{R}^{n}\right)$. The heat
kernel expansion now follows by plugging the resolvent expansion into
the Helffer-Sjöstrand formula (see \cite[Ch. 8, eq. 8.3]{Dimassi-Sjostrand}
for this formula and the notion of an analytic continuation used therein).
Finally, to see that the expansion only involves even powers of $\varepsilon$,
or that \prettyref{eq:Ben Arous exp.} has no half-integer powers
of $t$, note that the operators $\hat{\Delta}_{g^{E},F,\mu}^{\left(j\right)}$
in the expansion \prettyref{eq: privileged cord expansion Delta}
change sign by $\left(-1\right)^{j}$ under the rescaling $\delta_{-1}$.
Thus the Schwartz kernel for $\mathtt{C}_{j}^{z}$ \prettyref{eq:resolvent exp. terms}
then changes sign by $\left(-1\right)^{j}$ under this change of variables
giving $\mathtt{C}_{j}^{z}\left(0,0\right)=0$ for $j$ odd. 
\end{proof}
The above proof similarly gives an expansion for functions of the
Laplacian 
\begin{equation}
\left[\varphi\left(t\Delta_{g^{E},F,\mu}\right)\right]_{\mu}\left(x,x\right)=\frac{1}{t^{Q\left(x\right)/2}}\left[A_{0}^{\varphi}\left(x\right)+A_{1}^{\varphi}\left(x\right)t+\ldots+A_{N}^{\varphi}\left(x\right)t^{N}+O\left(t^{N}\right)\right],\label{eq:functional expansion.}
\end{equation}
$\forall\varphi\in\mathcal{S}\left(\mathbb{R}\right)$. As usual,
the same proof gives a point-wise, near-off diagonal expansion for
the heat kernel and its derivatives: i.e. an asymptotic expansion
for $\left[\varphi\left(t\Delta_{g^{E},F,\mu}\right)\right]_{\mu}\left(\delta_{t^{\frac{1}{2}}}x,\delta_{t^{\frac{1}{2}}}x'\right)$,
as $t\rightarrow0$, on the chosen privileged coordinate ball in the
$C^{\infty}$-norm on the product. This is only a matter of different
substitutions in \prettyref{eq:coordinate dilation} and in the Helffer-Sjöstrand
formula for $\varphi$ in \prettyref{eq: regularized expansion}. 

However both the above and the expansion \prettyref{thm: BenArousThm}
hold only pointwise along the diagonal. In particular, the leading
order $Q\left(x\right)$ is in general a function of the point $x$
on the diagonal. This hence does not immediately give heat trace or
spectral function asymptotics for the sR Laplacian as the expansion
might not be uniform or integrable in $x$. In the equiregular case,
where $Q\left(x\right)=Q$ is constant, a uniform set of privileged
coordinates, privileged at each point in a neighborhood of $x$, may
be chosen in the proof. This gives the uniformity of the expansion
in $x$ and one obtains the asymptotics for the Weyl counting function
$N\left(\lambda\right)$, for the number of eigenvalues of $\Delta_{g^{E},F,\mu}$
below $\lambda$. 
\begin{thm}
\label{thm:equiregular expansion}For an equiregular sR manifold case
there is a heat trace expansion 
\[
\textrm{tr }e^{-t\Delta_{g^{E},F,\mu}}=\frac{1}{t^{Q/2}}\left[a_{0}+a_{1}t+\ldots+a_{N}t^{N}+O\left(t^{N}\right)\right],
\]
$\forall N\in\mathbb{N},$ with leading term given by $a_{0}=\int_{X}\left[e^{-\hat{\Delta}_{\hat{g}^{E},\hat{\mu}}}\right]_{\hat{\mu}}\left(0,0\right)\mu$.
Thus the Weyl counting function satisfies 
\begin{align*}
N\left(\lambda\right) & =\frac{\lambda^{Q/2}\left(1+o\left(1\right)\right)}{\Gamma\left(Q/2+1\right)}\int_{X}\left[e^{-\hat{\Delta}_{\hat{g}^{E},\hat{\mu}}}\right]_{\hat{\mu}}\left(0,0\right)\mu.
\end{align*}
\end{thm}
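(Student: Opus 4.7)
The plan is to upgrade the pointwise Ben Arous expansion of Theorem \ref{thm: BenArousThm} to a uniform one in $x\in X$, integrate against $\mu$ to obtain the trace expansion, and then invoke Karamata's Tauberian theorem for the counting function. The main obstacle will be the uniformity of the pointwise expansion: it depends on a choice of privileged coordinate system at each point, and in general such choices cannot be made smoothly.

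The equiregular hypothesis supplies precisely the needed structure. First I would fix $x_0 \in X$ and a local orthonormal frame $U_1,\ldots,U_m$ for $E$ near $x_0$. Since each $E_j$ is now a genuine subbundle with locally constant rank $m_j^E$, one can iteratively complete the $U_j$ to a smoothly varying local frame $Y_1,\ldots,Y_n$ for $TX$ adapted to the canonical flag \prettyref{eq: canonical flag}, by selecting iterated Lie brackets in the $U_j$ whose equivalence classes span each graded piece $E_j/E_{j-1}$. Privileged coordinates centered at any $x$ near $x_0$ can then be constructed from composed flows of $Y_1,\ldots,Y_n$ (cf.\ \cite{Bellaiche-book-1996}, Ch.\ 4), yielding a smooth family of privileged charts depending smoothly on the base point; in each such chart $\mu_{\textrm{Popp}}$ agrees with Lebesgue at the origin and $Q(x)=Q$ is constant.

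Next I would repeat the proof of Theorem \ref{thm: BenArousThm} verbatim in these uniform coordinates. The coefficients in the expansion \prettyref{eq: privileged cord expansion Delta} of the rescaled operator $\tilde\Delta^\varepsilon_{g^E,F,\mu}$, as well as the nilpotent model $\hat\Delta^{(0)}$ of \prettyref{eq: nilpotent Laplacian Euclidean}, now vary smoothly with $x$, and the subelliptic resolvent estimates hold with constants locally uniform. The Schwartz kernel expansion obtained from \prettyref{eq: regularized expansion} via Sobolev embedding and the Helffer--Sj\"ostrand formula, and hence the heat kernel expansion \prettyref{eq:Ben Arous exp.}, then holds uniformly on compact sets in $x$ with smooth coefficients $A_j\in C^\infty(X;\textrm{End}(F))$. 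Integrating against $\mu$ via a finite partition of unity on the compact manifold $X$ produces the trace expansion with leading term $a_0=\int_X [e^{-\hat\Delta_{\hat g,\hat\mu}}]_{\hat\mu}(0,0)\,\mu$. The alternative expression in terms of $\mu_{\textrm{Popp}}$ follows by a direct density-transformation check: since $\mu_{\textrm{Popp}}$ equals Lebesgue at the origin of any privileged chart, the product $[e^{-\hat\Delta_{\hat g,\hat\nu}}]_{\hat\nu}(0,0)\,\nu$ is invariant under the choice of auxiliary volume $\nu$.

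Finally, for the counting function I would apply Karamata's Tauberian theorem to the heat trace, which is the Laplace transform of the spectral measure $\sum_j \delta_{\lambda_j}$. The leading asymptotic $\textrm{tr}\,e^{-t\Delta_{g^E,F,\mu}}\sim a_0 t^{-Q/2}$ as $t\to 0^+$ immediately yields $N(\lambda)\sim a_0\lambda^{Q/2}/\Gamma(Q/2+1)$ as $\lambda\to\infty$, completing the proof. The main technical hurdle to verify will be the locally uniform subelliptic resolvent bounds in $x$, but this should follow from the smooth dependence of the nilpotent models $\hat\Delta^{(0)}_x$ on the base point together with their uniform bracket-generating structure.
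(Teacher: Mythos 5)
Your approach matches the paper's, which only sketches the argument: use equiregularity to select a smoothly varying family of privileged coordinate systems, thereby making the constants in the pointwise Ben Arous expansion of Theorem~\ref{thm: BenArousThm} locally uniform in the base point, integrate over $X$, and close with a Tauberian theorem. The density-transformation observation you give for the $\mu_{\textrm{Popp}}$ formulation and the appeal to Karamata are both correct and consistent with what the paper intends.
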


The above two theorems are by now well known \cite{BenArous-hypheatexpansion,Leandre-hypoelliptic-heat92,Metivier-hypspectralfunction,Takanobu-hypheat},
with the investigation of the small time heat kernel asymptotics having
begun in \cite{Bismut84}. The above proof is based on the analytic
localization technique \cite{Bismut-Lebeau91} combined with the use
of sR geometric privileged coordinate dilations.

\section{\label{sec:Bochner-Laplacian-on}Bochner Laplacian on tensor powers}

A natural place where sub-Riemannian structures arise is on unit circle
bundles. To be precise, let us consider $\left(X,E,g^{E}\right)$
a corank $1$ sR structure on an $n$-dimensional manifold $X$. We
assume that there is a free $S^{1}$ action on $X$ with respect to
which the sR structure is invariant and transversal: the generator
$e\in C^{\infty}\left(TX\right)$ of the action and $E$ are transversal
at each point. The quotient $Y\coloneqq X/S^{1}$ is then a manifold
with a Riemannian metric $g^{TY}$ induced from $g^{E}$. Equivalently,
the natural projection $\pi:X\rightarrow Y$ is a principal $S^{1}$
bundle with connection given by the horizontal distribution $E$.
Let $L\coloneqq X\times_{\rho}S^{1}\rightarrow Y$ be the Hermitian
line bundle associated to the standard one-dimensional representation
$\rho$ of $S^{1}$ with induced connection $\nabla^{L}$ and curvature
$R^{L}$. Since the distribution is of corank $1$, the growth vector
at $x$ is simply a function of the step $r\left(x\right)$ and given
by $m^{E}\left(x\right)=\Big(0,\underbrace{n-1,n-1,\ldots,n-1}_{r\left(x\right)-1\,\textrm{times}},n\Big)$
\prettyref{eq:growth vector}. Equivalently, the canonical flag \prettyref{eq: canonical flag}
is given by 
\[
E_{j}\left(x\right)=\begin{cases}
E; & 1\leq j\leq r\left(x\right)-1\\
TX; & j=r\left(x\right)
\end{cases}.
\]
 Also, note that the weight vector at $x$ is $w^{E}\left(x\right)=\Big(\underbrace{1,1,\ldots,1}_{n-1\,\textrm{ times}},r\left(x\right)\Big),$
while the Hausdorff dimension is given by $Q\left(x\right)=n-1+r\left(x\right)$.
On account of the $S^{1}$ invariance, each of $m^{E}\left(x\right),$
$r\left(x\right)$ and $Q\left(x\right)$ descend to functions on
the base manifold $Y$. The degree of nonholonomy $r\left(x\right)$
at $x$ is now characterized in terms of the order of vanishing of
the curvature $R^{L}$ as below. 
\begin{prop}
\label{prop: degree of nonholonomy is ord van.}The degree of nonholonomy
of an $S^{1}$ invariant sR structure
\begin{align}
r\left(x\right)-2=\textrm{ord}\left(R^{L}\right) & \coloneqq\min\left\{ l|j_{\pi\left(x\right)}^{l}\left(R^{L}\right)\neq0\right\} \label{eq:order of vanishing}
\end{align}
is given in terms of the order of vanishing of the curvature $R^{L}$
on the base. 
\end{prop}

\begin{proof}
In terms of local coordinates on $Y$ and a local orthonormal section
$\mathtt{l}$ for $L$, we may write $\nabla^{L}=d+ia^{L}$; $a^{L}\in\Omega^{1}\left(Y\right)$,
while $E=\ker\left[d\theta+a^{L}\right]$ with $\theta$ being the
induced coordinate on each fiber of $X$. The proposition now follows
on noting $\left[U_{i},U_{j}\right]=\left(da^{L}\right)_{ij}\partial_{\theta}=R_{ij}^{L}\partial_{\theta}$
for the local generating vector fields $U_{j}\coloneqq\partial_{y_{j}}-a_{j}^{L}\partial_{\theta}$,
$1\leq j\leq n-1$. Repeated brackets among the $U_{j}$'s are then
given in terms of derivatives of the curvature $R^{L}$.
\end{proof}
Thus we see that the bracket generating condition is equivalent to
the curvature $R^{L}$ having a finite order of vanishing at each
point of $Y$. 

\subsubsection{\label{subsec:Structure-of Yr}Structure of $Y_{r}$}

As noted before, the function $y\mapsto r_{y}$ \prettyref{eq:order of vanishing curv.}
is upper semi-continuous and gives a decomposition of the manifold
$Y=\bigcup_{j=2}^{r}Y_{j}$; $Y_{j}\coloneqq\left\{ y\in Y|r_{y}=j\right\} $
with each $Y_{\leq j}\coloneqq\bigcup_{j'=0}^{j}Y_{j'}$ being open.
We next address the local structure of $Y_{r}$, the locus of highest
vanishing order for the curvature. 
\begin{prop}
\label{prop:structure of Yr} The subset $Y_{r}\subset Y$ is locally
any closed subset of a hypersurface. 
\end{prop}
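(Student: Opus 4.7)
The plan is to show that near each $y_0\in Y_r$ one can produce a single smooth scalar function $f$ defined on some neighborhood $U$ of $y_0$ with $f|_{Y_r\cap U}\equiv 0$ and $df(y_0)\neq 0$. Once this is done, after shrinking $U$ the zero set $H\coloneqq\{f=0\}\cap U$ is a smooth hypersurface through $y_0$ by the implicit function theorem, $Y_r\cap U\subset H$ by construction, and $Y_r\cap U$ is closed in $H$ because $Y_{\leq r-1}$ is open in $Y$ (so $Y_r$ is closed in $Y$).

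First I would introduce local coordinates $(y_1,\ldots,y_{n-1})$ centered at $y_0$ together with a local frame of $\Lambda^2T^*Y$, so that the curvature reads $R^L=\sum_{i<j}R^L_{ij}(y)\,e^i\wedge e^j$ with smooth scalar coefficients. The assumption $r_{y_0}=r$ says $j^{r-3}_{y_0}R^L=0$ and $j^{r-2}_{y_0}R^L\neq 0$: every partial derivative $\partial^\beta R^L_{ij}$ with $|\beta|\leq r-3$ vanishes at $y_0$, whereas at least one derivative of order exactly $r-2$ of some component does not.

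Next I would select indices $i,j,k$ and a multi-index $\alpha$ with $|\alpha|=r-3$ such that $\partial_k\partial^\alpha R^L_{ij}(y_0)\neq 0$, and set $f\coloneqq\partial^\alpha R^L_{ij}$ on a small neighborhood $U$ of $y_0$. By construction $f(y_0)=0$ while $\partial_k f(y_0)\neq 0$, so the implicit function theorem applies. Moreover, for any $y\in Y_r\cap U$ the condition $r_y=r$ forces $j^{r-3}_yR^L=0$, and in particular $f(y)=\partial^\alpha R^L_{ij}(y)=0$. This gives $Y_r\cap U\subset H$, with $Y_r\cap U$ closed in $H$ since $Y_r$ is closed in $Y$, which is what was to be shown.

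There is no real obstacle in this argument; the only point of care is to choose $f$ as one specific scalar component of the $(r-3)$-rd jet of $R^L$ rather than to work with the abstract jet section, so that the implicit function theorem can be invoked directly. (The degenerate case $r=2$ is excluded in spirit, since then $R^L$ vanishes nowhere and $Y_r=Y$ is not a subset of any hypersurface; for $r\geq 3$ the multi-index $\alpha$ of length $r-3\geq 0$ always exists.)
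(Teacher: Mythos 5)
Your argument is correct and matches the paper's proof essentially verbatim: both single out a scalar component $\partial^\alpha R^L_{ij}$ of the $(r-3)$-jet whose differential at $y_0$ is non-zero (guaranteed by the non-vanishing of $j^{r-2}_{y_0}R^L$), and observe that $Y_r$ lies in its zero set since $r_y=r$ forces the entire $(r-3)$-jet to vanish. You are a bit more explicit than the paper about invoking the implicit function theorem, about closedness of $Y_r$ (following from upper semi-continuity of $y\mapsto r_y$), and about the degenerate case $r=2$, but the route is the same.
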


\begin{proof}
First, express the curvature $R^{L}=R_{ij}^{L}dy_{i}\wedge dy_{j}$
in some coordinates centered at $y\in Y_{r}$. By definition, $Y_{r}$
is described by equations of the following type near $y$
\begin{align}
\partial_{y}^{\alpha}R_{ij}^{L} & =0,\quad\forall i,j=1,2,\ldots,n-1,\,\alpha\in\mathbb{N}_{0}^{n-1},\left|\alpha\right|\leq r-3,\quad\textrm{while}\label{eq:local vanishing}\\
\partial_{y}^{\alpha_{0}}R_{i_{0}j_{0}}^{L} & \neq0,\quad\textrm{for some }i_{0},j_{0}=1,2,\ldots,n-1,\,\alpha_{0}\in\mathbb{N}_{0}^{n-1},\left|\alpha_{0}\right|=r-2.\label{eq:local non-vanishing}
\end{align}
The second equation \prettyref{eq:local non-vanishing} implies that
one of the functions $\partial_{y}^{\alpha}R_{ij}^{L}$, $\left|\alpha\right|=r-3$,
has a non-zero differential and cuts out a hypersurface. 

Conversely, let $S\subset\left\{ 0\right\} \times\mathbb{B}_{y_{2},\ldots,y_{n-1}}^{n-2}\subset\mathbb{R}_{y_{1},y_{2},\ldots,y_{n-1}}^{n-1}$
be any closed subset of the $n-2$ dimensional ball, sitting inside
the hypersurface $\left\{ y_{1}=0\right\} $ in $n-1$ dimensions.
By an application of the Whitney extension theorem, there exists a
smooth function $f\left(y_{2},\ldots,y_{n-1}\right)\in C^{\infty}\left(\mathbb{R}_{y_{2},\ldots,y_{n-1}}^{n-2}\right)$
such that $S=\left\{ 0\right\} \times\left\{ f_{y_{3}}=\ldots=f_{y_{n-1}}=0\right\} $.
The closed two form 
\[
R^{L}=d\left[-fdy_{1}+\frac{1}{2}y_{1}^{2}dy_{2}\right]=\left(y_{1}+f_{y_{2}}\right)dy_{1}dy_{2}+\sum_{j=3}^{n-1}f_{y_{j}}dy_{1}dy_{j}
\]
 is the curvature of some connection on the trivial line bundle over
the ball. This curvature form satisfies $r=3$ with $Y_{3}=\left\{ y_{1}+f_{y_{2}}=f_{y_{3}}=\ldots=f_{y_{n-1}}=0\right\} $.
The local structure of $Y_{3}$ near the origin is now the same as
$S$ under the diffeomorphism $\left(y_{1},y_{2}\ldots,y_{n-1}\right)\mapsto\left(y_{1}+f_{y_{2}},y_{2},\ldots,y_{n-1}\right)$.
\end{proof}

\subsection{\label{subsec:Smallest-eigenvalue}Smallest eigenvalue}

The unit circle bundle of $L$ being $X$, the pullback $\mathbb{C}\cong\pi^{*}L\rightarrow X$
is canonically trivial via the identification $\pi^{*}L\ni\left(x,l\right)\mapsto x^{-1}l\in\mathbb{C}$.
Pick an auxiliary complex Hermitian vector bundle with connection
$\left(F,h^{F},\nabla^{F}\right)$ on $Y$ and we denote by the same
notation its pullback to $X$. Pulling back sections then gives the
identification 
\begin{equation}
C^{\infty}\left(X;F\right)=\oplus_{k\in\mathbb{Z}}C^{\infty}\left(Y;F\otimes L^{k}\right).\label{eq:Fourier decomposition}
\end{equation}
Each summand on the right-hand side above corresponds to an eigenspace
of $\nabla_{e}^{F}$ with eigenvalue $-ik$. While horizontal differentiation
$d^{H}$ on the left corresponds to differentiation with respect to
the tensor product connection$\nabla^{L^{k}}$ on the right-hand side
above. Pick an invariant density $\mu_{X}$ on $X$ inducing a density
$\mu_{Y}$ on $Y$. This now defines the sR Laplacian $\Delta_{g^{E},F,\mu_{X}}$
acting on sections of $F$. By invariance the sR Laplacian commutes
$\left[\Delta_{g^{E},F,\mu_{X}},e\right]=0$ with the generator of
the circle action and hence preserves the decomposition \prettyref{eq:Fourier decomposition}.
It acts via 
\begin{equation}
\Delta_{g^{E},F,\mu_{X}}=\oplus_{k\in\mathbb{Z}}\Delta_{k}\label{eq: Fourier decomposition Laplacian}
\end{equation}
on each component where $\Delta_{k}$ is the Bochner Laplacian \eqref{e:BL}
on the tensor powers $F\otimes L^{k}$, with adjoint being taken with
respect to $\mu_{Y}$. 

Next, we show that the heat kernel expansion for the sR Laplacian
\prettyref{thm: BenArousThm} gives a corresponding heat kernel expansion
for the Bochner Laplacian.
\begin{thm}
\label{thm:Bochner heat kernel expansion}The heat kernel of the Bochner
Laplacian $\Delta_{k}$ has the following pointwise expansion on the
diagonal 
\begin{equation}
e^{-\frac{t}{k^{2/r}}\Delta_{k}}\left(y,y\right)=\begin{cases}
k^{\left(n-1\right)/r}\left[\sum_{j=0}^{N}a_{2j}\left(y;t\right)k^{-2j/r}+O\left(k^{-\left(N+1\right)/r}\right)\right]; & y\in Y_{r}\\
O\left(k^{-\infty}\right); & y\in Y_{\leq r-1}
\end{cases}\label{eq: heat kernel expansion-1}
\end{equation}
with leading coefficient $a_{0}\left(y;t\right)=e^{-t\Delta_{y}}\left(0,0\right)$
being the heat kernel of the model operator \prettyref{eq:model Bochner Laplacian}
on the tangent space. 
\end{thm}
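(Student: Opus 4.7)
The plan is to derive the expansion from the already-established sR heat kernel expansion on the circle bundle $X$ by taking a $k$-th Fourier mode. Since the sR Laplacian $\Delta_{g^E,F,\mu_X}$ commutes with the $S^1$-action and decomposes as $\oplus_k \Delta_k$ under \prettyref{eq:Fourier decomposition}, one has the identity
\[
e^{-T\Delta_k}(y,y) = \int_{S^1} d\theta'\, e^{-T\Delta_{g^E,F,\mu_X}}\bigl(l_y, l_y e^{i\theta'}\bigr)\, e^{-ik\theta'},
\]
which expresses the Bochner heat kernel as a Fourier coefficient of the sR heat kernel along the fiber over $y$. This reduces the problem to analyzing the sR heat kernel on a neighborhood of the fiber above $y$.

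First I would fix privileged coordinates on $Y$ at $y$ together with a unit section $\mathtt{l}$ trivializing $L$ by radial parallel transport, so that the connection form is given by \prettyref{eq:connection in terms of curvature}; the induced coordinates $(y,\theta)$ on $X$ are privileged at each point of the fiber over $y$, with $Q(x) = n-1 + r_y$. I would then set $T = t/k^{2/r}$ and write $\varepsilon = k^{-1/r}$, so that $T = \varepsilon^2 t \cdot k^{-2(1-r_y/r)/r} \cdot \textrm{(something)}$; more precisely one applies the dilation identity \prettyref{eq:coordinate dilation} together with the rescaled-resolvent expansion \prettyref{eq: regularized expansion} established in the proof of \prettyref{thm: BenArousThm}, with the nilpotentization at $y$ in the circle-bundle setting being precisely the model operator on $S^1_\theta \times \mathbb{R}^{n-1}$ associated to the jet $j^{r-2}R_y^L$. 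This produces the uniform expansion \prettyref{eq:rescaling exp. Bochner} for the sR heat kernel along the fiber over $y$.

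Next I would plug this expansion into the Fourier integral over $\theta'$ and rescale $\theta' = \varepsilon^{r_y}\tau = k^{-r_y/r}\tau$, which turns the phase into $e^{-ik^{r_1(y)}\tau}$ with $r_1(y) = 1 - r_y/r$. When $y \in Y_r$ we have $r_1(y) = 0$, the phase is trivial, and the $\tau$-integral simply extracts the $k$-th Fourier coefficient of the sR nilpotent heat kernel, which by definition of the model Laplacian (see \prettyref{subsec:Kodaira-Laplacian} and the construction in \prettyref{sec:Model-operators}) equals $e^{-t\Delta_{g_y^{TY},j^{r-2}R_y^L}}(0,0)$; this gives the leading coefficient $a_0(y;t)$, and the higher-order terms $a_{2j}$ arise via the resolvent expansion \prettyref{eq:resolvent exp. terms} as in \prettyref{eq: coeff. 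Bochner exp.}. When $y \in Y_{\leq r-1}$ the exponent $r_1(y) > 0$ is strictly positive, so the phase $e^{-ik^{r_1(y)}\tau}$ oscillates arbitrarily fast against a Schwartz-class amplitude, and repeated integration by parts in $\tau$ yields the $O(k^{-\infty})$ decay.

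The main technical obstacle is verifying that the expansion \prettyref{eq:rescaling exp. Bochner} is uniform in the fiber variable $\theta' \in S^1$ with remainders in a function space strong enough to justify both the stationary-phase extraction on $Y_r$ and the integration-by-parts argument on $Y_{\leq r-1}$. This is handled exactly as in the proof of \prettyref{thm: BenArousThm}: one uses the localization \prettyref{lem: Localization lemma} to replace the global heat kernel by that of a compactly-modified operator $\tilde\Delta_{g^E,F,\mu}$ on $\mathbb{R}^n$, expresses the heat kernel via Helffer--Sjöstrand from the resolvent, iterates the subelliptic estimate \prettyref{eq:subelliptic estimate} to control the remainder term $R_\varepsilon^{(N)}$ in Sobolev norm, and then invokes Sobolev embedding for $M \gg 0$ to pass to $C^0$-bounds on Schwartz kernels uniformly in $\theta'$.
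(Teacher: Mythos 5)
Your proposal follows the paper's own proof closely: both start from the Fourier-mode relation \prettyref{eq:Fourier mode heat kernel}, localize via \prettyref{lem: Localization lemma}, apply the privileged-coordinate dilation $\delta_{k^{-1/r}}$ together with the regularized resolvent expansion from the proof of \prettyref{thm: BenArousThm} (with nilpotentization on $S^1_\theta\times\mathbb{R}^{n-1}$), and finally distinguish $Y_r$ from $Y_{\le r-1}$ via the oscillation rate $k^{r_1(y)}$ of the rescaled fiber integral.

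One small imprecision worth correcting: on $Y_r$ the rescaled phase is $e^{-i\tau}$, not the constant $1$, and the $\tau$-integral extracts the \emph{first} (not the $k$-th) Fourier coefficient of the nilpotent sR heat kernel on $S^1_\theta\times\mathbb{R}^{n-1}$. What happens is that the original frequency $k$ in $e^{-ik\theta}$ is exactly neutralized by the substitution $\theta = k^{-r_y/r}\tau$ precisely when $r_y=r$, leaving unit frequency; that surviving unit-frequency extraction is the relation \prettyref{eq: model heat ker. reln.} which identifies the leading term with the model Bochner heat kernel $e^{-t\Delta_{g_y^{TY},j^{r-2}R_y^L}}(0,0)$. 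Your conclusion is correct, but the claim that the phase is trivial would give the zeroth Fourier coefficient and hence the wrong model operator.
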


\begin{proof}
The Fourier decomposition for the Laplacians \prettyref{eq: Fourier decomposition Laplacian},
gives the corresponding relation 
\begin{equation}
e^{-T\Delta_{k}}\left(y_{1},y_{2}\right)=\left[\int d\theta\,e^{-T\Delta_{g^{E},F,\mu_{X}}}\left(l_{y_{1}},l_{y_{2}}e^{i\theta}\right)e^{-ik\theta}\right]l_{y_{1}}\otimes l_{y_{2}}^{*}\label{eq:Fourier mode heat kernel}
\end{equation}
between the heat kernels with $l_{y_{1}},l_{y_{2}}$ denoting two
unit elements in the fibers of $L$ above $y_{1},y_{2}$ respectively.
We again note that the kernels are computed with respect to the densities
$\mu_{X}$, $\mu_{Y}$ chosen before. The above relation together
with \prettyref{eq:heat exponential decay off-diagonal} first gives
\begin{equation}
e^{-\frac{1}{k^{2/r}}\Delta_{k}}\left(y_{1},y_{2}\right)=c_{\varepsilon,N}k^{-N},\quad\forall N\in\mathbb{N},\label{eq: off diagonal decay Bochner}
\end{equation}
when $d\left(y_{1},y_{2}\right)>\varepsilon>0$.

Choosing a coordinate system centered at a point $y\in Y$ and a local
orthonormal section $\mathtt{l}$ of $L$ gives an induced coordinate
system on the unit circle bundle near $x$. It is easy to see that
this induced coordinate system is privileged at each point on the
fiber above $y.$

Next, using \prettyref{eq:Fourier mode heat kernel} with $T=\varepsilon^{2}t$
and $y_{1}=y_{2}$ belonging to this coordinate chart, one has 
\begin{align*}
e^{-\varepsilon^{2}t\Delta_{k}}\left(y_{1},y_{1}\right) & =\left[\int d\delta_{\varepsilon}\theta'\,e^{-\varepsilon^{2}t\Delta_{g^{E},F,\mu_{X}}}\left(\mathtt{l}\left(y_{1}\right),\mathtt{l}\left(y_{1}\right)e^{i\delta_{\varepsilon}\theta'}\right)e^{-ik\delta_{\varepsilon}\theta'}\right]
\end{align*}
where $\delta_{\varepsilon}$ denotes the privileged coordinate dilation
as before. Now setting $y_{1}=\varepsilon\mathsf{y}=\delta_{\varepsilon}\mathsf{y}$,
the equations \prettyref{eq:coordinate dilation}, \prettyref{eq: privileged cord expansion Delta}
in the proof of \prettyref{thm: BenArousThm} give an expansion for
the integrand above 
\begin{equation}
e^{-\varepsilon^{2}t\Delta_{k}}\left(\delta_{\varepsilon}\mathsf{y},\delta_{\varepsilon}\mathsf{y}\right)=\int d\delta_{\varepsilon}\theta'e^{-ik\delta_{\varepsilon}\theta'}\,\varepsilon^{-Q\left(y\right)}\left[\sum_{j=0}^{N}a_{2j}\left(\mathsf{y},\theta';t\right)\varepsilon^{2j}+\frac{\varepsilon^{2N+1}}{t^{Q\left(y\right)/2}}R_{N+1}\left(\mathsf{y},\theta';t\right)\right]\label{eq:rescaling exp. Bochner}
\end{equation}
uniformly in $t\leq1$ and $\mathsf{y}\in B_{R}\left(0\right)$, $\forall R>0$.
A slight difference above being that the coefficients $a_{j}\left(\mathsf{y},\theta';t\right)$
above are computed with respect to the model nilpotent sR Laplacian
$\hat{\Delta}_{y}\coloneqq\hat{\Delta}_{g_{y}^{TY},j^{r-2}R_{y}^{L}}$
\prettyref{eq:nilpotent Laplacian} on the product $S_{\theta}^{1}\times\mathbb{R}^{n-1}$
rather than \prettyref{eq: nilpotent Laplacian Euclidean} on Euclidean
space. In particular, the leading term is $a_{0}\left(\mathsf{y},\theta';t\right)=e^{-t\hat{\Delta}_{y}}\left(\mathsf{y},0;\mathsf{y},\theta'\right)$.
Now set $\varepsilon=k^{-\frac{1}{r}}$ and $r_{1}\left(y\right)\coloneqq1-\frac{r\left(y\right)}{r}$
to obtain 
\begin{align}
e^{-\frac{t}{k^{2/r}}\Delta_{k}}\left(k^{-\frac{1}{r}}\mathsf{y},k^{-\frac{1}{r}}\mathsf{y}\right) & =\int d\delta_{k^{-1/r}}\theta'e^{-ik^{r_{1}\left(y\right)}\theta'}\,k^{Q\left(y\right)/r}\left[\sum_{j=0}^{N}a_{2j}\left(\mathsf{y},\theta';t\right)k^{-2j/r}+O\left(k^{-\left(2N+1\right)/r}\right)\right]\nonumber \\
 & =\begin{cases}
k^{\left(n-1\right)/r}\left[\sum_{j=0}^{N}a_{2j}\left(\mathsf{y};t\right)k^{-2j/r}+O\left(k^{-\left(2N+1\right)/r}\right)\right]; & y\in Y_{r}\\
O\left(k^{-\infty}\right); & y\in Y_{\leq r-1}
\end{cases}\label{eq: Bochner heat kernel exp off. diag.}
\end{align}
following a stationary phase expansion in $\theta'$. Finally, setting
$y=y_{1}=\mathsf{y}$ in \prettyref{eq: Bochner heat kernel exp off. diag.}
proves the theorem.

Above we again note that the remainders are uniform for $\mathsf{y}\in B_{R}\left(0\right),\forall R>0$.
The first coefficient is given by the model Laplacian on the tangent
space $\Delta_{y}\coloneqq\Delta_{g_{y}^{TY},j^{r-2}R_{y}^{L}}$ via
\[
a_{0}\left(\mathsf{y};t\right)=\int d\theta'e^{-i\theta'}e^{-t\hat{\Delta}_{y}}\left(\mathsf{y},0;\mathsf{y},\theta'\right)=e^{-t\Delta_{y}}\left(\mathsf{y},\mathsf{y}\right)
\]
 by \prettyref{eq: model heat ker. reln.}. While the general coefficient
has the form
\begin{align}
a_{2j}\left(\mathsf{y};t\right) & =-\frac{1}{\pi}\int_{\mathbb{C}}\bar{\partial}\tilde{\rho}\left(z\right)\mathtt{C}_{2j}^{z}\left(\mathsf{y},\mathsf{y}\right)dzd\bar{z}\nonumber \\
\mathtt{C}_{2j}^{z} & =\sum_{p\in I_{2j}}\left(\Delta_{y}-z\right)^{-1}\left[\prod_{\alpha}\triangle_{p_{\alpha}}\left(\Delta_{y}-z\right)^{-1}\right]\label{eq: coeff. Bochner exp.}
\end{align}
as in \prettyref{eq:resolvent exp. terms}, for some set of second-order
differential operators $\triangle_{j}$, $j=1,2,\ldots\,$, (see also
\prettyref{eq:rescaling exp. Bochner} below). Above $\tilde{\rho}$
denotes an almost analytic continuation of $\rho\in\mathcal{S}\left(\mathbb{R}\right)$
satisfying $\rho\left(x\right)=e^{-tx}$, $x\geq0$. 
\end{proof}
We now show how the heat kernel expansion immediately proves our first
\prettyref{thm:First eigenvalue Bochner}.
\begin{proof}[Proof of \prettyref{thm:First eigenvalue Bochner}]
 We first give a short argument for asymptotic bounds on the smallest
eigenvalue
\begin{equation}
C_{1}k^{2/r}-C_{1}\leq\lambda_{0}\left(k\right)\leq C\left[1+o\left(1\right)\right]k^{2/r}.\label{eq:asymptotic bounds for l0}
\end{equation}
The upper bound follows easily from a min-max argument. Namely by
the min-max principle for self-adjoint operators applied to the model
operator $\Delta_{y}$ on the tangent space at $y\in Y_{r}$, there
exists $\tilde{\psi}\in C_{c}^{\infty}\left(\mathbb{R}^{n-1}\right)$,
$\left\Vert \psi\right\Vert =1$ such that $\left\langle \Delta_{y}\tilde{\psi},\tilde{\psi}\right\rangle \leq\lambda_{0}\left(\Delta_{y}\right)+\varepsilon,$
for each $\varepsilon>0$. Furthermore, the model operator arises
as the leading term $\left(\delta_{k^{-1/r}}\right)_{*}\Delta_{k}=k^{2/r}\left[\Delta_{y}+O\left(k^{-1/r}\right)\right]$
under the rescaling $\delta_{k^{-1/r}}y\coloneqq k^{-1/r}y$ in geodesic
coordinates centered at $y$ (cf. also subsection \prettyref{subsec:First-eigenvalue expansion}
below). From the min-max principle for $\Delta_{k}$ one then obtains
\[
\frac{\lambda_{0}\left(k\right)}{k^{2/r}}\leq k^{-2/r}\left\langle \Delta_{k}\tilde{\psi}_{0}^{k},\tilde{\psi}_{0}^{k}\right\rangle \leq\lambda_{0}\left(\Delta_{y}\right)+o\left(1\right)
\]
 for $\tilde{\psi}_{0}^{k}\coloneqq k^{\left(n-1\right)/r}\left(\delta_{k^{-1/r}}\right)^{*}\tilde{\psi}$.
The upper bound \prettyref{eq:asymptotic bounds for l0} now follows.
For the lower bound, we combine the trick of Guillemin-Uribe with
the Rothschild-Stein subelliptic estimate \prettyref{eq:local subelliptic estimate}
on the circle bundle to obtain 
\[
C_{1}\left\Vert \partial_{\theta}^{1/r}s\right\Vert ^{2}\leq C_{1}\left\Vert s\right\Vert _{H^{1/r}}^{2}\leq\left[\left\langle \Delta_{g^{E},F,\mu_{X}}s,s\right\rangle +\left\Vert s\right\Vert _{L^{2}}^{2}\right],
\]
$\forall s\in C^{\infty}\left(X;F\right).$ Letting $s=\pi^{*}\psi_{0}^{k}$
be the pullback of the orthonormal eigenfunction $\psi_{0}^{k}$ of
$\Delta_{k}$ gives $C_{1}k^{2/r}\leq\left(\lambda_{0}\left(k\right)+1\right)$
as required. To obtain the leading asymptotic \prettyref{eq: estimate first eigenvalue}
in \prettyref{thm:First eigenvalue Bochner} however one needs to
show $C_{1}=C$ in \prettyref{eq:asymptotic bounds for l0}. This
requires a closer look at the Rothschild-Stein subelliptic estimate
\prettyref{eq:local subelliptic estimate} and in particular identifying
the sharp constant therein. 

We instead take an alternate route via the heat kernel, this is also
consistent with our proofs of the other two theorems in the introduction.
First for any $0<t_{1}<t_{2}$, $y\in Y_{r}$ and $R>0$, one has
the following estimate at leading order using \prettyref{eq: Bochner heat kernel exp off. diag.}
\begin{align}
\frac{\lambda_{0}\left(k\right)}{k^{2/r}} & \leq\frac{1}{\left(t_{2}-t_{1}\right)}\ln\left(\frac{\int_{B_{R}\left(0\right)}d\left(k^{-\frac{1}{r}}\mathsf{y}\right)e^{-\frac{t_{1}}{k^{2/r}}\Delta_{k}}\left(k^{-\frac{1}{r}}\mathsf{y},k^{-\frac{1}{r}}\mathsf{y}\right)}{\int_{B_{R}\left(0\right)}d\left(k^{-\frac{1}{r}}\mathsf{y}\right)e^{-\frac{t_{2}}{k^{2/r}}\Delta_{k}}\left(k^{-\frac{1}{r}}\mathsf{y},k^{-\frac{1}{r}}\mathsf{y}\right)}\right)\nonumber \\
 & =\frac{1}{\left(t_{2}-t_{1}\right)}\ln\left(\frac{\int_{B_{R}\left(0\right)}d\mathsf{y}e^{-t_{1}\Delta_{y}}\left(\mathsf{y},\mathsf{y}\right)+O\left(k^{-1/r}\right)}{\int_{B_{R}\left(0\right)}d\mathsf{y}e^{-t_{2}\Delta_{y}}\left(\mathsf{y},\mathsf{y}\right)+O\left(k^{-1/r}\right)}\right)\nonumber \\
 & =\frac{1}{\left(t_{2}-t_{1}\right)}\ln\left(\frac{\int_{B_{R}\left(0\right)}d\mathsf{y}e^{-t_{1}\Delta_{y}}\left(\mathsf{y},\mathsf{y}\right)}{\int_{B_{R}\left(0\right)}d\mathsf{y}e^{-t_{1}\Delta_{y}}\left(\mathsf{y},\mathsf{y}\right)}\right)+O\left(k^{-1/r}\right).\label{eq: upper est. first ev.}
\end{align}
This already gives an upper bound on the first eigenvalue. To identify
the constant \prettyref{eq: upper constant} one takes the limit as
$t_{1}\rightarrow t_{2}$ to obtain 
\begin{align*}
\frac{\lambda_{0}\left(k\right)}{k^{2/r}} & \leq\frac{\int_{B_{R}\left(0\right)}d\mathsf{y}\left[\Delta_{y}e^{-t_{1}\Delta_{y}}\right]\left(\mathsf{y},\mathsf{y}\right)}{\int_{B_{R}\left(0\right)}d\mathsf{y}e^{-t_{1}\Delta_{y}}\left(\mathsf{y},\mathsf{y}\right)}+O\left(k^{-1/r}\right),
\end{align*}
$\forall t_{1}>0$. Using Proposition \prettyref{prop:large time heat =00003D000026 bottom of spec}
of \prettyref{sec:Model-operators}, this gives $\limsup_{k\rightarrow\infty}\frac{\lambda_{0}\left(k\right)}{k^{2/r}}\leq\lambda_{0}\left(\Delta_{y}\right)+\varepsilon,$$\forall\varepsilon>0$,
$y\in Y_{r}$, and hence 
\begin{equation}
\limsup_{k\rightarrow\infty}\frac{\lambda_{0}\left(k\right)}{k^{2/r}}\leq\inf_{y\in Y_{r}}\lambda_{0}\left(\Delta_{y}\right).\label{eq:upper bound lambda0}
\end{equation}
For the lower bound on $\lambda_{0}\left(k\right)$, first note that
as in \prettyref{eq:functional expansion.} one may prove an on diagonal
expansion 
\[
\varphi\left(\frac{1}{k^{2/r}}\Delta_{k}\right)\left(y,y\right)=k^{\left(n-1\right)/r}\left[a_{0}^{\varphi}\left(x\right)+a_{1}^{\varphi}\left(x\right)k^{-1/r}+\ldots+a_{N}^{\varphi}\left(x\right)k^{-N/r}+O\left(k^{-\left(N+1\right)/r}\right)\right]
\]
$\forall\varphi\in\mathcal{S}\left(\mathbb{R}\right)$, and where
the coefficient $a_{j}^{\varphi}$ has the form \prettyref{eq: coeff. Bochner exp.}
with $\tilde{\rho}$ replaced with an analytic continuation of $\varphi$.
Next note that each of the terms $\mathtt{C}_{2j}^{z}$ \prettyref{eq: coeff. Bochner exp.}
is holomorphic in $z$ for $\textrm{Re}z<C\coloneqq\inf_{y\in Y_{r}}\lambda_{0}\left(\Delta_{y}\right)$.
This gives $\varphi\left(\frac{1}{k^{2/r}}\Delta_{k}\right)\left(y,y\right)=O\left(k^{-N}\right),$$\forall N\in\mathbb{N},$
uniformly in $y\in Y$, for $\varphi\in C_{c}^{\infty}\left(-\infty,C\right)$.
Thus 
\begin{align}
\varphi\left(\frac{\lambda_{0}\left(k\right)}{k^{2/r}}\right) & \leq\textrm{tr }\varphi\left(\frac{1}{k^{2/r}}\Delta_{k}\right)=O\left(k^{-N}\right)\nonumber \\
\textrm{and hence}\qquad\qquad\qquad\inf_{y\in Y_{r}}\lambda_{0}\left(\Delta_{y}\right) & \leq\liminf_{k\rightarrow\infty}\frac{\lambda_{0}\left(k\right)}{k^{2/r}}.\label{eq:lower bound lambda0}
\end{align}
From \prettyref{eq:upper bound lambda0}, \prettyref{eq:lower bound lambda0}
we have \prettyref{eq: estimate first eigenvalue}. 

The estimate on the eigenfunction \prettyref{eq: estimate first eigenfunction}
then follows from $\left|\psi_{0}^{k}\left(y\right)\right|^{2}\leq e^{\frac{\lambda_{0}\left(k\right)}{k^{2/r}}}e^{-\frac{1}{k^{2/r}}\Delta_{k}}\left(y,y\right)$
on using \prettyref{eq: heat kernel expansion-1} and \prettyref{eq: estimate first eigenvalue}.
\end{proof}

\subsection{\label{subsec:Weyl-laws}Weyl law}

In this subsection and the next, we shall prove \prettyref{thm: Weyl law =00003D000026 expansion}
assuming $Y_{r}=\bigcup_{j=1}^{N}Y_{r,j}$ to be a union of embedded
submanifolds, of dimensions $d_{j}\coloneqq\textrm{dim}\left(Y_{r,j}\right)$,
along which the curvature $R^{L}$ vanishes non-degenerately \prettyref{eq:non-degeneracy assumption}.
Before proceeding, the following remark on our non-degeneracy hypothesis
is in order. 
\begin{rem}
\label{rem:(Non-degeneracy-hypothesis)} (Non-degeneracy hypothesis)
The non-degeneracy hypothesis \prettyref{eq:non-degeneracy assumption}
can be described more explicitly in local coordinates. Namely, if
we choose a coordinate system 
\[
\left(\underbrace{y_{1},\ldots,y_{d_{j}}}_{=y'};\underbrace{y_{d_{j}+1},\ldots,y_{n-1}}_{=y''}\right)
\]
 near $y\in Y_{r,j}$ in which $Y_{r,j}=\left\{ y''=0\right\} $ is
given by the vanishing of the last $n-1-d_{j}$ of these coordinates,
then the curvature can be Taylor expanded as
\begin{equation}
R^{L}=\underbrace{\sum_{\left|\alpha\right|=r-2}\sum_{p,q=1}^{n}R_{pq,\alpha}\left(y''\right)^{\alpha}dy_{p}dy_{q}}_{=R_{0}^{L}}+O\left(\left(y''\right)^{r-1}\right).\label{eq:curvature Taylor exp.}
\end{equation}
The non-degeneracy condition \prettyref{eq:non-degeneracy assumption}
is now seen to be equivalent to the implication
\begin{equation}
\left(\partial^{\beta}R_{0}^{L}\right)\left(y\right)=0,\,\:\forall\left|\beta\right|<r-2\,\iff y''=0.\label{eq:loc. non-deg. hyp}
\end{equation}
That is, the $\left(r-2\right)$-order vanishing locus $Y_{r}=Y_{r}^{0}\coloneqq\left\{ y\in Y|\textrm{ord}_{y}\left(R_{0}^{L}\right)=r-2\right\} $
is locally the same for the curvature $R^{L}$ and its leading part
$R_{0}^{L}$. An example of a curvature that is not non-degenerate
in this sense is $R^{L}=\left(y_{1}^{2}+y_{2}^{4}\right)dy_{1}dy_{2}$.
Here $r=4$, the leading part of the curvature is $R_{0}^{L}=y_{1}^{2}dy_{1}dy_{2}$,
while $\left\{ 0\right\} =Y_{4}\neq Y_{4}^{0}=\left\{ y_{1}=0\right\} $.

A more restrictive condition, that is common in the literature and
satisfied in the Montgomery case \cite{Montgomery-95paper}, is that
the curvature $R^{L}$ defines a 'magnetic well' at $Y_{r}$ \cite{Helffer-Mohamed96}.
This assumes the existence of positive constants $C_{1},C_{2}>0$
for which the curvature satisfies 
\begin{equation}
C_{1}d^{g}\left(y,Y_{r}\right)^{r-2}\leq\left|R^{L}\left(y\right)\right|\leq C_{2}d^{g}\left(y,Y_{r}\right)^{r-2},\quad\forall y\in Y,\label{eq:megnetic well condition}
\end{equation}
with $d^{g}$ denoting the Riemannian distance above. It is easy to
see that the above \prettyref{eq:megnetic well condition} is stronger
than and implies our non-degeneracy hypothesis \prettyref{eq:loc. non-deg. hyp}.
Examples of curvatures in dimension two that are non-degenerate \prettyref{eq:loc. non-deg. hyp}
without defining a magnetic well \prettyref{eq:megnetic well condition}
are $R^{L}=y_{1}y_{2}dy_{1}dy_{2}$ (normal crossing), $y_{1}y_{2}\left(y_{1}+y_{2}\right)dy_{1}dy_{2},\,y_{1}y_{2}\left(y_{1}^{2}-y_{2}^{2}\right)dy_{1}dy_{2}$
(multiple crossings), $y_{1}\left(y_{2}-y_{1}^{2}\right)dy_{1}dy_{2}$
(tangential crossing), $y_{1}\left(y_{1}^{3}-y_{2}^{2}\right)dy_{1}dy_{2}$
(cuspidal vanishing) and $y_{1}\left(y_{1}^{k+1}\pm y_{2}^{2}\right)dy_{1}dy_{2}$
($A_{k}^{\pm}$ singularity). While in higher dimension a general
class of examples is given by curvatures of the form $R^{L}=fdf\wedge y_{1}dy_{1}\in\Omega^{2}\left(\mathbb{R}^{n-1}\right)$,
for $f=y_{2}\ldots y_{n-1}g\left(y_{2},\ldots,y_{n-1}\right)$, with
$g$ being any homogeneous polynomial. The vanishing set $Y_{\geq3}$
for these curvatures includes $\left\{ 0\right\} \times V\left[g\right]$
for the variety $V\left[g\right]\coloneqq\left\{ g=0\right\} \subset\mathbb{R}_{y_{2},\ldots,y_{n}}^{n-2}$
corresponding to the arbitrary homogeneous polynomial $g$. While
the highest order vanishing locus $Y_{r}=\left\{ 0\right\} $ is the
origin for the above.
\end{rem}

By a standard Tauberian argument, the first part of \prettyref{thm: Weyl law =00003D000026 expansion}
on the asymptotics of the Weyl counting function now follows from
the following heat trace expansion. 
\begin{thm}
\label{thm: local Weyl law} Assume that $Y_{r}=\bigcup_{j=1}^{N}Y_{r,j}$
is a union of embedded submanifolds, of dimensions $d_{j}\coloneqq\textrm{dim}\left(Y_{r,j}\right)$,
along which the curvature $R^{L}$ vanishes non-degenerately \prettyref{eq:non-degeneracy assumption}.
For any $f\in C^{\infty}\left(Y\right)$, the heat trace of the Bochner
Laplacian satisfies the asymptotics 
\begin{equation}
\textrm{tr}\left[fe^{-\frac{t}{k^{2/r}}\Delta_{k}}\right]=\sum_{j=1}^{N}\left\{ \sum_{s=0}^{M}k^{\left(d_{j}-2s\right)/r}\left[\int_{NY_{r,j}}a_{j,s}\left(f;t\right)\right]+O\left(k^{\left(d_{j}-2M-1\right)/r}\right)\right\} \label{eq:trace expansion Bochner}
\end{equation}
$\forall M\in\mathbb{N}$, $t\leq1$. Moreover, the leading terms
above are given by 
\begin{equation}
a_{j,0}\left(f;t\right)=\left.f\right|_{Y_{r,j}}e^{-t\Delta_{y}}\left(v,v\right),\;\;v\in N_{y}Y_{r,j},\label{eq:leading term heat trace}
\end{equation}
in terms of the pullback to the normal bundle of $\left.f\right|_{Y_{r,j}}$. 
\end{thm}

\begin{proof}
By \prettyref{thm:Bochner heat kernel expansion} it suffices to consider
$f$ supported in a sufficiently small neighborhood of a given point
$y\in Y_{r,j}$. We then again choose a coordinate system near $y$
in which $Y_{r,j}$ is given by the vanishing of the last $n-1-d_{j}$
of the coordinates and in which the curvature has the Taylor expansion
\prettyref{eq:curvature Taylor exp.}. We may further assume the coordinate
vector fields $\left\{ \partial_{y_{j}}\right\} _{j=1}^{n-1}$ to
be orthonormal at $y.$ The model operator \prettyref{eq:model Bochner in coordinates}
on the tangent space 
\[
\Delta_{y}=-\sum_{\left|\alpha\right|=r-2}\sum_{p,q=1}^{n}\left(\partial_{y_{p}}+\frac{i}{r}y^{q}\left(y''\right)^{\alpha}R_{pq,\alpha}\right)^{2},
\]
is given in terms of this leading part of the curvature. Below it
shall also be useful to define the model semiclassical $\mathsf{k}$-Bochner
Laplacian 
\begin{equation}
\Delta_{y;\mathsf{k}}^{\textrm{mod}}\coloneqq-\sum_{\left|\alpha\right|=r-2}\sum_{p,q=1}^{n}\left(\partial_{y_{p}}+\frac{i\mathsf{k}}{r}y^{q}\left(y''\right)^{\alpha}R_{pq,\alpha}\right)^{2},\quad\forall\mathsf{k}>0,\label{eq:N model Bochner}
\end{equation}
corresponding to the leading part of the curvature in \prettyref{eq:curvature Taylor exp.}.

Next from \prettyref{eq: Bochner heat kernel exp off. diag.} one
has 
\begin{align}
e^{-\frac{1}{k^{2/r}}\Delta_{k}}\left(\delta_{\varepsilon}\mathsf{y},\delta_{\varepsilon}\mathsf{y}\right) & =k^{\left(n-1\right)/r}\left[\sum_{j=0}^{N}a_{2j}\left(\varepsilon k^{1/r}\mathsf{y};t\right)k^{-2j/r}+O\left(k^{-\left(2N+1\right)/r}\right)\right],\nonumber \\
\textrm{with }\quad a_{0}\left(\varepsilon k^{1/r}\mathsf{y};t\right) & =e^{-t\Delta_{y}}\left(\varepsilon k^{1/r}\mathsf{y},\varepsilon k^{1/r}\mathsf{y}\right),\label{eq:leading term near sing.}
\end{align}
uniformly for $k^{-1/r}\geq\varepsilon$ and $\mathsf{y}\in B_{1}\left(0\right)$.
Furthermore, substituting $t=\frac{1}{\varepsilon^{2}k^{2/r}}$ in
\prettyref{eq:rescaling exp. Bochner} we obtain 
\begin{align}
e^{-\frac{1}{k^{2/r}}\Delta_{k}}\left(\delta_{\varepsilon}\mathsf{y},\delta_{\varepsilon}\mathsf{y}\right) & =\varepsilon^{-\left(n-1\right)}\int d\theta'e^{-ik\varepsilon^{r}\theta'}\,\left[\sum_{j=0}^{N}a_{2j}\left(\mathsf{y},\theta';\frac{1}{\varepsilon^{2}k^{2/r}}\right)\varepsilon^{2j}\right.\label{eq:Bochner of diag heat kernel exp.}\\
 & \qquad\qquad\left.+\frac{\varepsilon^{2N+1}}{\left(\varepsilon k^{1/r}\right)^{n-1+r}}R_{2N+1}\left(\mathsf{y},\theta';\frac{1}{\varepsilon^{2}k^{2/r}}\right)\right],\nonumber \\
a_{0}\left(\mathsf{y},\theta';\frac{1}{\varepsilon^{2}k^{2/r}}\right) & =e^{-\frac{1}{\varepsilon^{2}k^{2/r}}\hat{\Delta}_{y}}\left(\mathsf{y},0;\mathsf{y},\theta'\right)\label{eq: leading heat kernel off sing.}
\end{align}
uniformly for $k\in\mathbb{N}$, $k^{-1/r}\leq\varepsilon$ and $\mathsf{y}\in B_{1}\left(0\right)$.
The leading term above is identified with the heat kernel 
\[
e^{-\frac{1}{\mathsf{k}^{2/r}}\Delta_{y;\mathsf{k}}^{\textrm{mod}}}\left(\mathsf{y},\mathsf{y}\right)=\int d\theta'e^{-ik\varepsilon^{r}\theta'}\,a_{0}\left(\mathsf{y},\theta';\frac{1}{\varepsilon^{2}k^{2/r}}\right),
\]
of the model $\mathsf{k}$-Bochner Laplacian \prettyref{eq:N model Bochner}
for $\mathsf{k}\coloneqq k\varepsilon^{r}$. One next chooses 
\[
\mathsf{y}=\Big(\underbrace{0,\ldots,0}_{=y'};\underbrace{y_{d_{j}+1},\ldots,y_{n-1}}_{=y''}\Big),\:\left|y''\right|=1,
\]
of the given form so that $\textrm{ord}_{\mathsf{y}}\left(R_{0}^{L}\right)<r-2$
by \prettyref{eq:loc. non-deg. hyp}. Then 
\begin{equation}
e^{-\frac{1}{\mathsf{k}^{2/r}}\Delta_{y;\mathsf{k}}^{\textrm{mod}}}\left(\mathsf{y},\mathsf{y}\right)=e^{-\Delta_{y}}\left(\mathsf{k}^{1/r}\mathsf{y},\mathsf{k}^{1/r}\mathsf{y}\right)=O\left(\mathsf{k}^{-\infty}\right),\label{eq: heat ker. int away sing.}
\end{equation}
follows by a stationary phase type argument as in \prettyref{thm:Bochner heat kernel expansion}.
A similar argument applied to the subsequent terms in \prettyref{eq: leading heat kernel off sing.},
which are given by convolution integrals with the leading part, shows
that $\int d\theta'e^{-ik\varepsilon^{r}\theta'}\,a_{2j}\left(\mathsf{y},\theta';\frac{1}{\varepsilon^{2}k^{2/r}}\right)=O\left(\mathsf{k}^{-\infty}\right)$,
$\forall j$. In particular, the terms of \prettyref{eq:Bochner of diag heat kernel exp.},
\prettyref{eq: leading heat kernel off sing.} are integrable in $\varepsilon$
for fixed $k$. Thus \prettyref{eq:leading term near sing.}, \prettyref{eq: leading heat kernel off sing.},
\prettyref{eq: heat ker. int away sing.} and a Taylor expansion for
$f$ near $\mathsf{y}=0$ combine to give \prettyref{eq:trace expansion Bochner}. 
\end{proof}

\subsection{\label{subsec:First-eigenvalue expansion}Expansion for the smallest
eigenvalue}

In this subsection we prove the second part of \prettyref{thm: Weyl law =00003D000026 expansion}
on the expansion for the first eigenvalue $\lambda_{0}\left(k\right)$,
assuming non-degeneracy \prettyref{eq:non-degeneracy assumption}
and when $Y_{r}$ is a finite set of points. The same argument as
below, with a minor modification, also gives an expansion for the
$m$th eigenvalue $\lambda_{m}\left(k\right)$ for any fixed $m\in\mathbb{N}_{0}$.

Before proceeding, we note a short argument showing that a weaker
version of the second part \prettyref{eq:expansion for small eigenvalues}
of \prettyref{thm: Weyl law =00003D000026 expansion} is immediate
from its first part \prettyref{eq:Weyl law}. Namely, when $Y_{r}$
is a finite set of points (or $d_{j}^{\max}=0$), the number of eigenvalues
for the Bochner Laplacian $N\left[c_{1}k^{2/r},c_{2}k^{2/r}\right]$,
for $c_{1}<C<c_{2}$, has a limit as $k\rightarrow\infty$ by \prettyref{eq:Weyl law}.
Furthermore, by \prettyref{thm: local Weyl law} the functional traces
$\textrm{ tr }\varphi\left(\frac{1}{k^{2/r}}\Delta_{k}\right)$, $\varphi\in C_{c}^{\infty}\left(c_{1},c_{2}\right)$,
involving the eigenvalues in this interval, have expansions in powers
of $k^{-1/r}$. Thus for $k$ sufficiently large $\lambda_{0}\left(k\right)$
is a root of the polynomial $p_{k^{-1/r}}\left(\lambda\right)\coloneqq\Pi_{\lambda_{j}\left(k\right)\in\left[c_{1}k^{2/r},c_{2}k^{2/r}\right]}\left(\lambda-\lambda_{j}\left(k\right)\right)$
of a fixed degree in $\lambda$. The coefficients of this polynomial
can be written in terms of the functional traces and hence have expansions
in powers of $k^{-1/r}$. By an application of analytic perturbation
theory for polynomial roots \cite[Ch. 2.2]{Kato}, the smallest eigenvalue
$\lambda_{0}\left(k\right)$ has an expansion in powers of $k^{-1/Mr}$,
where $M\in\mathbb{N}$ is the multiplicity of one of the roots of
$p_{0}\left(\lambda\right)$. 

The above argument is however insufficient to obtain an expansion
in powers of $k^{-1/r}$. Below we instead show that $\lambda_{0}\left(k\right)$
is an eigenvalue of a family of self-adjoint matrices $A_{k^{-1/r}}$,
of fixed rank, whose entries admit expansions in $k^{-1/r}$. One
may then apply analytic perturbation theory for self-adjoint matrices.
This requires working at the level of eigenfunctions and our technique
again partly borrows from \cite[Ch. 9]{Bismut-Lebeau91}.

We first need some terminology. Let $\varrho<\min\left\{ \frac{1}{2},\frac{1}{2}i_{g^{TY}}\right\} $
be smaller than half the injectivity radius $i_{g^{TY}}$ of $\left(Y,g^{TY}\right)$.
Choose a geodesic coordinate system on a ball $B_{2\varrho}\left(y\right)$
centered at $y\in Y_{r}$. Below it shall also be useful to choose
$\varrho$ small enough so that the balls $\left\{ B_{2\varrho}\left(y\right)\right\} _{y\in Y_{r}}$
are disjoint. Choose local trivializations $\mathtt{l}$, $\left\{ s_{j}\right\} _{j=1}^{\textrm{rank }\left(F\right)}$
of $L$, $F$ over $B_{2\varrho}\left(y\right)$ that are parallel
with respect to $\nabla^{L}$, $\nabla^{F}$ respectively along geodesics
starting at the origin. The Bochner Laplacian can be written in this
local frame and coordinates as $\Delta_{k}=\left(\nabla^{F\otimes L^{k}}\right)^{*}\nabla^{F\otimes L^{k}}$
where 
\begin{align}
\nabla^{F\otimes L^{k}}= & d+a^{F}+ka^{L}\nonumber \\
a_{p}^{L}= & \int_{0}^{1}d\rho\left(\rho y^{q}R_{pq}^{L}\left(\rho x\right)\right),\nonumber \\
a_{p}^{F}= & \int_{0}^{1}d\rho\left(\rho y^{q}R_{pq}^{F}\left(\rho x\right)\right),\label{eq:connection tensor product}
\end{align}
With $\chi\in C_{c}^{\infty}\left(\left[-1,1\right];\left[0,1\right]\right)$
with $\chi=1$ on $\left[-\frac{1}{2},\frac{1}{2}\right]$, we define
the modified connections on $\mathbb{R}^{n-1}$ via 
\begin{align}
\tilde{\nabla}^{F} & =d+\chi\left(\frac{\left|y\right|}{2\varrho}\right)a^{F}\nonumber \\
\tilde{\nabla}^{L} & =d+\left[\underbrace{\int_{0}^{1}d\rho\,\rho y^{k}\left(\tilde{R}^{L}\right)_{jk}\left(\rho y\right)}_{=\tilde{a}_{j}^{L}}\right]dy_{j},\quad\textrm{ where}\nonumber \\
\tilde{R}^{L} & =\chi\left(\frac{\left|y\right|}{2\varrho}\right)R^{L}+\left[1-\chi\left(\frac{\left|y\right|}{2\varrho}\right)\right]R_{0}^{L}.\label{eq:modified connection-1}
\end{align}
Further, we choose a modified metric $\tilde{g}^{TY}$ which is Euclidean
outside $B_{2\varrho}\left(y\right)$ and agrees with $g^{TY}$ on
$B_{\varrho}\left(y\right)$. This defines the modified Bochner Laplacian
$\tilde{\Delta}_{k}\coloneqq\left(\tilde{\nabla}^{F\otimes L^{k}}\right)^{*}\tilde{\nabla}^{F\otimes L^{k}}$
agreeing with $\Delta_{k}=\tilde{\Delta}_{k}$ on the geodesic ball
$B_{\varrho}\left(y\right)$.

A dilation as before is now defined via $\delta_{k^{-1/r}}y\coloneqq\left(k^{-1/r}y_{1},\ldots,k^{-1/r}y_{n-1}\right)$
and we consider the rescaled Bochner Laplacian 
\begin{equation}
\triangle\coloneqq k^{-2/r}\left(\delta_{k^{-1/r}}\right)_{*}\tilde{\Delta}_{k}.\label{eq:rescaled Bochner}
\end{equation}
Using a Taylor expansion and \prettyref{eq:modified connection-1},
the rescaled Bochner Laplacian has an expansion 
\begin{eqnarray}
\triangle & = & \left(\sum_{j=0}^{N}k^{-j/r}\triangle_{j}\right)+k^{-2\left(N+1\right)/r}\mathrm{E}_{N+1},\quad\forall N.\label{eq: Taylor expansion Bochner}\\
\textrm{where each }\;\triangle_{j} & = & a_{j;pq}\left(y\right)\partial_{y_{p}}\partial_{y_{q}}+b_{j;p}\left(y\right)\partial_{y_{p}}+c_{j}\left(y\right)\label{eq:operators in Bochner expansion}
\end{eqnarray}
is a $k$-independent, self-adjoint, second-order differential operator
while each 
\begin{equation}
\mathrm{E}_{j}=\sum_{\left|\alpha\right|=N+1}y^{\alpha}\left[a_{j;pq}^{\alpha}\left(y;k\right)\partial_{y_{p}}\partial_{y_{q}}+b_{j;p}^{\alpha}\left(y;k\right)\partial_{y_{p}}+c_{j}^{\alpha}\left(y;k\right)\right]\label{eq: error operators in Bochner}
\end{equation}
is a $k$-dependent self-adjoint, second-order differential operator
on $\mathbb{R}^{n-1}$ . Furthermore the functions appearing in \prettyref{eq:operators in Bochner expansion}
are polynomials with degrees satisfying 
\begin{align}
\textrm{deg }a_{j}=j,\quad\textrm{ deg }b_{j}\leq j+r-1,\quad & \textrm{deg }c_{j}\leq j+2r-2\nonumber \\
\textrm{ deg }b_{j}-\left(j-1\right)=\textrm{deg }c_{j}-j=0 & \quad(\textrm{mod }2)\label{eq:inequalities and restrictions on coefficients}
\end{align}
and whose coefficients involve 
\begin{align}
a_{j}: & \;\textrm{ atmost }j-2\textrm{ derivatives of }R^{TY}\nonumber \\
b_{j},c_{j}: & \;\textrm{ atmost }j-2\textrm{ derivatives of }R^{F},R^{TY}\textrm{ and atmost }j+r-2\textrm{ derivatives of }R^{L}\label{eq:number of derivatives in coefficients}
\end{align}
The coefficients $a_{j;pq}^{\alpha}\left(y;k\right),b_{j;p}^{\alpha}\left(y;k\right),c_{j}^{\alpha}\left(y;k\right)$
of \prettyref{eq: error operators in Bochner} are moreover uniformly
$C^{\infty}$ bounded in $k$. The leading term of \prettyref{eq: Taylor expansion Dirac}
is computed 
\begin{equation}
\triangle_{0}=\Delta_{y}\coloneqq\Delta_{g_{y}^{TY},j^{r-2}R_{y}^{L}}\label{eq:leading term Bochner}
\end{equation}
in terms of the model Bochner Laplacian on the tangent space $TY$
\prettyref{eq:model Bochner Laplacian}. We shall see below that these
operators \prettyref{eq:operators in Bochner expansion} are the same
as those appearing in \prettyref{eq: coeff. Bochner exp.}.

Next, in our chosen coordinates and trivialization, the curvature
$R^{L}$ again has a Taylor expansion as in \prettyref{eq:curvature Taylor exp.}
with the non-degeneracy condition \prettyref{eq:non-degeneracy assumption}
being equivalent to \prettyref{eq:loc. non-deg. hyp}. If $Y_{r}$
is further a finite set of points then the model operator \prettyref{eq:leading term Bochner}
at $y\in Y_{r}$ has a discrete spectrum, $\textrm{EssSpec}\left(\Delta_{y}\right)=\emptyset$,
by Proposition \prettyref{prop:model spectra} in \prettyref{sec:Model-operators}.
We then set $\lambda_{0,y}<\lambda_{1,y}$ to be the two smallest
eigenvalues of $\Delta_{y}$ and $E_{0,y}\coloneqq\ker\left[\Delta_{y}-\lambda_{0,y}\right]$
the smallest eigenspace. Any normalized $\tilde{\psi}\in E_{0,y}$
defines a quasimode 
\begin{align}
\widetilde{\psi}_{k}\left(y\right)\coloneqq & \chi\left(\frac{2\left|y\right|}{\varrho}\right)\underbrace{k^{\left(n-1\right)/2r}\tilde{\psi}\left(k^{1/r}y\right)}_{=k^{\left(n-1\right)/2r}\delta_{k^{-1/r}}^{*}\tilde{\psi}}\in C^{\infty}\left(Y;F\otimes L^{k}\right),\quad\textrm{ satisfying}\nonumber \\
\left\Vert \widetilde{\psi}_{k}\right\Vert = & 1+o\left(1\right)\nonumber \\
\Delta_{k}\widetilde{\psi}_{k}= & k^{2/r}\lambda_{0,y}\widetilde{\psi}_{k}+O_{L^{2}}\left(k^{1/r}\right).\label{eq:quasimodes =00003D000026 properties}
\end{align}
And we define $\tilde{E}_{0,y}$ to be the span of the quasimodes
corresponding to an orthonormal basis of $E_{0,y}$ . Finally set
$\bar{\lambda}_{0}\coloneqq\min_{y\in Y_{r}}\lambda_{0,y}$, $\bar{Y}_{r}\coloneqq\left\{ y\in Y_{r}|\lambda_{0,y}=\bar{\lambda}_{0}\right\} \subset Y_{r}$
and $\bar{\lambda}_{1}\coloneqq\min\left\{ \lambda_{1,y}|y\in\bar{Y}_{r}\right\} \cup\left\{ \lambda_{0,y}|y\in Y_{r}\setminus\bar{Y}_{r}\right\} >\bar{\lambda}_{0}$.
Further set $\tilde{E}_{0}\coloneqq\oplus_{y\in\bar{Y}_{r}}\tilde{E}_{0,y}\subset C^{\infty}\left(Y;F\otimes L^{k}\right)$
and $\tilde{E}_{0}^{\perp}$ to be its $L^{2}$ orthogonal complement. 

We now have the following proposition.
\begin{prop}
There exist $c>0$, $k_{0}\in\mathbb{N}$ such that 
\begin{align}
\left|\left\langle \Delta_{k}\tilde{\psi},\tilde{\psi}\right\rangle -\bar{\lambda}_{0}k^{2/r}\right| & \leq ck^{1/r}\label{eq: small quasi eigenspaces}\\
\left\langle \Delta_{k}\psi,\psi\right\rangle  & \geq\frac{1}{2}\left(\bar{\lambda}_{0}+\bar{\lambda}_{1}\right)k^{2/r}\label{eq: complement space}
\end{align}
for each $k>k_{0}$ and $\tilde{\psi}\in\tilde{E}_{0},\,\psi\in C^{\infty}\left(Y;F\otimes L^{k}\right)\cap\tilde{E}_{0}^{\perp}$
of unit $L^{2}$-norm. 
\end{prop}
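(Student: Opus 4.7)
The plan is to derive both bounds by combining the quasimode construction with a min--max argument tied to the Weyl counting law of Theorem \prettyref{thm: Weyl law =00003D000026 expansion} (specialized to the 0-dimensional case where $Y_r$ is a finite union of points, so $d_j=0$). Throughout, write $N_0 = \dim\tilde E_0 = \sum_{y\in\bar Y_r}\dim E_{0,y}$.

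For \prettyref{eq: small quasi eigenspaces}, decompose a unit $\tilde\psi \in \tilde E_0$ as an (exactly) orthogonal sum $\tilde\psi = \sum_{y\in\bar Y_r}\tilde\psi_{k,y}$, which is possible because the balls $B_\varrho(y)$ are disjoint by choice of $\varrho$. The single-quasimode identity $\Delta_k\tilde\psi_{k,y} = \bar\lambda_0 k^{2/r}\tilde\psi_{k,y} + O_{L^2}(k^{1/r})$ (valid since $\lambda_{0,y}=\bar\lambda_0$ for $y\in\bar Y_r$), from \prettyref{eq:quasimodes =00003D000026 properties}, sums to $\Delta_k\tilde\psi = \bar\lambda_0 k^{2/r}\tilde\psi + O_{L^2}(k^{1/r})$; pairing with $\tilde\psi$ yields \prettyref{eq: small quasi eigenspaces}.

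For \prettyref{eq: complement space} I would argue by contradiction and min--max. Suppose for some subsequence there exist unit $\psi_k \in \tilde E_0^\perp$ with $\langle\Delta_k\psi_k,\psi_k\rangle < \frac{1}{2}(\bar\lambda_0+\bar\lambda_1)k^{2/r}$. On the $(N_0+1)$-dimensional subspace $W_k \coloneqq \tilde E_0 \oplus \mathrm{span}(\psi_k)$, a general element $\psi = \tilde\psi + c\psi_k$ satisfies $\|\psi\|^2 = \|\tilde\psi\|^2 + |c|^2$ (since $\tilde\psi \perp \psi_k$), and the cross term is controlled by the quasimode identity and $\langle\tilde\psi,\psi_k\rangle=0$: $|\langle\Delta_k\tilde\psi,\psi_k\rangle| = |\langle O_{L^2}(k^{1/r}\|\tilde\psi\|),\psi_k\rangle| \le C k^{1/r}\|\tilde\psi\|$. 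Absorbing this via $2|c|\|\tilde\psi\|\le\|\psi\|^2$ and using $\bar\lambda_0 \le \frac{1}{2}(\bar\lambda_0+\bar\lambda_1)$, the Rayleigh quotient on $W_k$ is at most $\frac{1}{2}(\bar\lambda_0+\bar\lambda_1)k^{2/r} + Ck^{1/r}$. The min--max principle then forces $\lambda_{N_0}(k) \le \frac{1}{2}(\bar\lambda_0+\bar\lambda_1)k^{2/r} + Ck^{1/r}$.

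Now pick $c_1 \in (0,\bar\lambda_0)$ and $c_2 \in (\frac{1}{2}(\bar\lambda_0+\bar\lambda_1),\bar\lambda_1)$. Specialized to the present setting, Theorem \prettyref{thm: Weyl law =00003D000026 expansion} yields $N[c_1 k^{2/r}, c_2 k^{2/r}] \to \sum_{y\in Y_r}\#\bigl(\mathrm{Spec}(\Delta_{g_y^{TY},j^{r-2}R_y^L})\cap[c_1,c_2]\bigr)$; by the definitions of $\bar\lambda_0$ and $\bar\lambda_1$, points outside $\bar Y_r$ contribute nothing, while each $y\in\bar Y_r$ contributes exactly $\dim E_{0,y}$, so the limit equals $N_0$. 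Since the count is integer-valued it is precisely $N_0$ for $k \gg 0$, and the lower bound $\lambda_0(k)/k^{2/r} \to \bar\lambda_0 > c_1$ established earlier rules out any eigenvalues in $[0,c_1 k^{2/r}]$. Thus $\lambda_{N_0}(k) > c_2 k^{2/r}$ for large $k$, contradicting the previous min--max bound since $c_2 > \frac{1}{2}(\bar\lambda_0+\bar\lambda_1)$. The main obstacle is just the bookkeeping verifying that this asymptotic count pins down exactly $N_0$ eigenvalues in the relevant window; the remaining ingredients are either contained in \prettyref{eq:quasimodes =00003D000026 properties} or are standard.
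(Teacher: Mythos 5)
Your proof of \prettyref{eq: small quasi eigenspaces} is essentially the same as the paper's (decompose into the exactly orthogonal contributions $\tilde\psi_{k,y}$ supported in the disjoint balls $B_\varrho(y)$, apply \prettyref{eq:quasimodes =00003D000026 properties} summand by summand, pair with $\tilde\psi$); it is correct.

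Your proof of \prettyref{eq: complement space} is also correct but takes a genuinely different route. The paper proves it directly and self-containedly: it splits $\psi = \psi_1 + \psi_2$ using cutoffs $\chi_y$ around the points of $Y_r$, handles the far piece $\psi_2$ via an improved local spectral gap $\langle\Delta_k\psi_2,\psi_2\rangle\gtrsim k^{2/(r-1)}\|\psi_2\|^2$ (coming from the fact that the nonholonomy degree is $\le r-1$ on $\mathrm{spt}\,\psi_2$), and handles each near piece $\chi_y\psi$ by projecting onto $\ker[\triangle_0-\bar\lambda_0]$ and using $\chi_y\psi\perp\tilde E_{0,y}$ to show the projection $\psi_y^0$ is $o(1)$, yielding $\langle\triangle_0\chi_y\psi,\chi_y\psi\rangle\ge[\bar\lambda_1-o(1)]\|\chi_y\psi\|^2$; a weighted gluing completes the bound. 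Your argument instead uses the already-established Weyl law of Theorem \prettyref{thm: Weyl law =00003D000026 expansion} as a black box (valid since it is proved in \prettyref{subsec:Weyl-laws}, before the proposition), combined with the min--max principle and the first-eigenvalue asymptotic $\lambda_0(k)/k^{2/r}\to\bar\lambda_0$ from Theorem \prettyref{thm:First eigenvalue Bochner}. Both approaches are valid and give the same constant $\tfrac12(\bar\lambda_0+\bar\lambda_1)$; yours is shorter and more conceptual, while the paper's avoids invoking the Tauberian-derived Weyl count and produces the gap directly from localization and the $k^{2/(r-1)}$ estimate on the complement. One small point worth spelling out in your version: to pass from $N[c_1k^{2/r},c_2k^{2/r}]\to N_0$ to the integer identity $N[\cdots]=N_0$ for $k\gg 0$ you need $c_1,c_2\notin\bigcup_{y\in Y_r}\mathrm{Spec}(\Delta_{g_y^{TY},j^{r-2}R_y^L})$, which your choice of open intervals for $c_1,c_2$ indeed guarantees.
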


\begin{proof}
The first equation \prettyref{eq: small quasi eigenspaces} follows
easily from construction \prettyref{eq:quasimodes =00003D000026 properties}.

For \prettyref{eq: complement space}, we first set $\chi_{y}\psi\coloneqq\chi\left(\frac{d^{g^{TY}}\left(.,y\right)}{\varrho}\right)\psi$,
with $d^{g^{TY}}$ being the Riemannian distance, for each $y\in Y_{r}$
and split 
\[
\psi=\underbrace{\left(\sum_{y\in Y_{r}}\chi_{y}\right)\psi}_{=\psi_{1}}+\underbrace{\left(1-\sum_{y\in Y_{r}}\chi_{y}\right)\psi}_{=\psi_{2}}.
\]
Now since the $\psi_{2}$ is compactly supported away from $Y_{r}$,
an argument similar to \prettyref{eq:asymptotic bounds for l0} gives
\begin{equation}
\left\langle \Delta_{k}\psi_{2},\psi_{2}\right\rangle \geq\left[c_{1}k^{2/\left(r-1\right)}-c_{2}\right]\left\Vert \psi_{2}\right\Vert ^{2}\label{eq: spectral estimate away from Yr}
\end{equation}
for some constants $c_{1},c_{2}>0$ depending only on $\varrho$.
Next since $\chi_{y}\psi$, $y\in\bar{Y}_{r}$, has compact support
in $B_{\varrho}\left(y\right)$, we may decompose 
\[
k^{-\left(n-1\right)/2r}\left(\delta_{k^{-1/r}}^{-1}\right)^{*}\chi_{y}\psi=\underbrace{\psi_{y}^{0}}_{\in\ker\left[\triangle_{0}-\bar{\lambda}_{0}\right]}+\underbrace{\psi_{y}^{+}}_{\in\ker\left[\triangle_{0}-\bar{\lambda}_{0}\right]^{\perp}}.
\]
Clearly $\psi_{y}^{0}$ is orthogonal to $\psi_{y}^{+}$ and $\triangle_{0}\psi_{y}^{+}$
while $\left\langle \triangle_{0}\psi_{y}^{+},\psi_{y}^{+}\right\rangle \geq\bar{\lambda}_{1}\left\Vert \psi_{y}^{+}\right\Vert ^{2}$
by definition. Furthermore, $\chi_{y}\psi\perp\tilde{E}_{0,y}$ by
construction and hypothesis. Hence we may compute 
\begin{align*}
\left\langle \chi_{y}\psi,k^{\left(n-1\right)/2r}\delta_{k^{-1/r}}^{*}\tilde{\psi}\right\rangle  & =\left\langle \chi_{y}\psi,\left(1-\chi\right)k^{\left(n-1\right)/2r}\delta_{k^{-1/r}}^{*}\tilde{\psi}\right\rangle \\
 & =\left\langle k^{-\left(n-1\right)/2r}\left(\delta_{k^{-1/r}}^{-1}\right)^{*}\chi_{y}\psi,\left[1-\chi\left(k^{-1/r}y\right)\right]\tilde{\psi}\right\rangle =o\left(1\right)\left\Vert \chi_{y}\psi\right\Vert 
\end{align*}
for any normalized $\tilde{\psi}\in E_{0,y}$. This in turn gives
$\left\Vert \psi_{y}^{0}\right\Vert =o\left(1\right)\left\Vert \chi_{y}\psi\right\Vert $,
$\left\Vert \psi_{y}^{+}\right\Vert =\left[1-o\left(1\right)\right]\left\Vert \chi_{y}\psi\right\Vert $
and hence 
\begin{align*}
\left\langle \triangle_{0}k^{-n/2r}\left(\delta_{k^{-1/r}}^{-1}\right)^{*}\chi_{y}\psi,k^{-n/2r}\left(\delta_{k^{-1/r}}^{-1}\right)^{*}\chi_{y}\psi\right\rangle  & =\left\langle \triangle_{0}\psi_{y}^{0},\psi_{y}^{0}\right\rangle +\left\langle \triangle_{0}\psi_{y}^{+},\psi_{y}^{+}\right\rangle \\
 & \geq\bar{\lambda}_{1}\left\Vert \psi_{y}^{+}\right\Vert ^{2}\geq\left[\bar{\lambda}_{1}-o\left(1\right)\right]\left\Vert \chi_{y}\psi\right\Vert ^{2}.
\end{align*}
On account of the rescaling \prettyref{eq:rescaled Bochner}, \prettyref{eq: Taylor expansion Bochner},
\prettyref{eq:leading term Bochner} we then have 
\begin{equation}
\left\langle \Delta_{k}\chi_{y}\psi,\chi_{y}\psi\right\rangle \geq k^{2/r}\left[\bar{\lambda}_{1}-o\left(1\right)\right]\left\Vert \chi_{y}\psi\right\Vert ^{2}.\label{eq: spectral estimate near Yr}
\end{equation}
Finally, with $\chi_{1}=\sum_{y\in\bar{Y}_{r}}\chi_{y}$ we estimate
\begin{align*}
\left\Vert \nabla^{F\otimes L^{k}}\psi\right\Vert  & \geq\rho\left\Vert \chi_{1}\nabla^{F\otimes L^{k}}\psi\right\Vert +\left(1-\rho\right)\left\Vert \left(1-\chi_{1}\right)\nabla^{F\otimes L^{k}}\psi\right\Vert \\
 & =\rho\left\Vert -d\chi_{1}\psi+\nabla^{F\otimes L^{k}}\chi_{1}\psi\right\Vert +\left(1-\rho\right)\left\Vert d\chi_{1}\psi+\nabla^{F\otimes L^{k}}\left(1-\chi_{1}\right)\psi\right\Vert \\
 & =\rho\left\Vert \nabla^{F\otimes L^{k}}\chi_{1}\psi\right\Vert +\left(1-\rho\right)\left\Vert \nabla^{F\otimes L^{k}}\left(1-\chi_{1}\right)\psi\right\Vert -O\left(1\right)\left\Vert \psi\right\Vert \\
 & \geq\rho k^{1/r}\left[\bar{\lambda}_{1}-o\left(1\right)\right]^{1/2}\left\Vert \chi_{1}\psi\right\Vert +\left(1-\rho\right)\left[c_{1}k^{2/\left(r-1\right)}-c_{2}\right]^{1/2}\left\Vert \left(1-\chi_{1}\right)\psi\right\Vert -O\left(1\right)\left\Vert \psi\right\Vert \\
 & \geq\frac{1}{2}\left(\bar{\lambda}_{0}+\bar{\lambda}_{1}\right)^{1/2}k^{1/r}\left\Vert \psi\right\Vert 
\end{align*}
for $k\gg0$ by \prettyref{eq: spectral estimate away from Yr} and
\prettyref{eq: spectral estimate near Yr}. 
\end{proof}
Following the above proposition, the min-max principle for eigenvalues
gives 
\begin{align}
\textrm{Spec}\left(\Delta_{k}\right) & \subset\underbrace{\left[\bar{\lambda}_{0}k^{2/r}-ck^{1/r},\bar{\lambda}_{0}k^{2/r}+ck^{1/r}\right]}_{\mathcal{I}_{k}\coloneqq}\cup\left[\frac{1}{2}\left(\bar{\lambda}_{0}+\bar{\lambda}_{1}\right)k^{2/r},\infty\right).\label{eq:spectral gap smallest ev.}
\end{align}
Next, choose $\alpha\in\left(\bar{\lambda}_{0},\frac{\bar{\lambda}_{0}+\bar{\lambda}_{1}}{2}\right)$.
And let $\Gamma=\left\{ \left|z\right|=\alpha\right\} $ and $\varphi\in C_{c}\left(0,\alpha\right)$,
with $\varphi=1$ near $\bar{\lambda}_{0}$, define a circular contour
in the complex plane and a cutoff function respectively. The resolvent
$\left(\frac{1}{k^{2/r}}\Delta_{k}-z\right)^{-1}$ then exists for
$z\in\Gamma$, $k\gg0$ and one may define via 
\[
P_{0}\coloneqq\frac{1}{2\pi i}\int_{\Gamma}\left(\frac{1}{k^{2/r}}\Delta_{k}-z\right)^{-1}=\varphi\left(\frac{1}{k^{2/r}}\Delta_{k}\right)
\]
the spectral projection onto the span of the $\Delta_{k}$-eigenspaces
with eigenvalue in the first interval $\mathcal{I}_{k}$ of \prettyref{eq:spectral gap smallest ev.}.
Finally, \prettyref{eq: small quasi eigenspaces} and \prettyref{eq: complement space}
imply that 
\begin{equation}
P_{0}:\tilde{E}_{0}\xrightarrow{\sim}E_{0}\coloneqq\bigoplus\Big\{\ker\left(\Delta_{k}-\lambda\right):\lambda\in\mathcal{I}_{k}\Big\}\label{eq: proj is iso.}
\end{equation}
is an isomorphism for $k\gg0$. We now have the following. 
\begin{thm}
\label{thm:quasimode MC expansion} For any two quasimodes $\widetilde{\psi}_{k},\widetilde{\psi}_{k}'\in\tilde{E}_{0}$
\prettyref{eq:quasimodes =00003D000026 properties}, the inner product
\begin{equation}
\left\langle \widetilde{\psi}_{k},\Delta_{k}P_{0}
\widetilde{\psi}_{k}'\right\rangle =
k^{2/r}\sum_{j=0}^{N}\tilde{c}_{j}k^{-j/r}+
O\left(k^{\left(1-N\right)/r}\right)\label{eq: IP expansion}
\end{equation}
has an asymptotic expansion for some $\tilde{c}_{j}\in\mathbb{R}$,
$j=0,1,\ldots$. 
\end{thm}

\begin{proof}
For two quasimodes $\widetilde{\psi}_{k},\widetilde{\psi}_{k}'$ localized
at two different points of $\bar{Y}_{r}$ one has $\left\langle \widetilde{\psi}_{k},\Delta_{k}P_{0}\widetilde{\psi}_{k}'\right\rangle =O\left(k^{-\infty}\right)$
following a similar off-diagonal decay for the kernel of $\varphi\left(\frac{1}{k^{2/r}}\Delta_{k}\right)$
as \prettyref{eq: off diagonal decay Bochner}. We now consider two
$\widetilde{\psi}_{k},\widetilde{\psi}_{k}'\in\tilde{E}_{0,y}$ of
the form \prettyref{eq:quasimodes =00003D000026 properties} localized
at the same point $y\in\bar{Y}_{r}$. In this case, first a finite
propagation argument as in \prettyref{eq: heat localization on diagonal}
gives
\begin{align}
\left\langle \widetilde{\psi}_{k},\Delta_{k}P_{0}\widetilde{\psi}_{k}'\right\rangle  & =\left\langle \widetilde{\psi}_{k},\tilde{\Delta}_{k}\varphi\left(\frac{1}{k^{2/r}}\tilde{\Delta}_{k}\right)\widetilde{\psi}_{k}'\right\rangle +O\left(k^{-\infty}\right),\quad\textrm{ while }\nonumber \\
\frac{1}{k^{2/r}}\tilde{\Delta}_{k}\varphi\left(\frac{1}{k^{2/r}}\tilde{\Delta}_{k}\right)\left(y,y'\right) & =k^{\left(n-1\right)/r}\triangle\varphi\left(\triangle\right)\left(k^{1/r}y,k^{1/r}y'\right)\label{eq: matrix coefficient in terms of model}
\end{align}
follows by a similar rescaling as in \prettyref{eq:coordinate dilation}.
We now obtain an expansion for the right-hand side above by a resolvent
expansion for $\triangle$ similar to \prettyref{eq:sR resolvent expansion}.
Namely, let 
\[
I_{j}\coloneqq\left\{ p=\left(p_{0},p_{1},\ldots\right)|p_{\alpha}\in\mathbb{N},\sum p_{\alpha}=j\right\} 
\]
 denote the set of partitions of the integer $j$ and define 
\[
\mathtt{C}_{j}^{z}\coloneqq\sum_{p\in I_{j}}\left(\triangle_{0}-z\right)^{-1}\left[\prod_{\alpha}\triangle_{p_{\alpha}}\left(\triangle_{0}-z\right)^{-1}\right].
\]
Then by repeated applications of the local elliptic estimate we have
\begin{equation}
\left(\triangle-z\right)^{-1}-\sum_{j=0}^{N}k^{-j/r}\mathtt{C}_{j}^{z}=O_{H_{\textrm{loc}}^{s}\rightarrow H_{\textrm{loc}}^{s+2}}\left(k^{-\left(N+1\right)/r}\left|\textrm{Im}z\right|^{-2rN-2}\right),\label{eq:resolvent expansion Bochner resolvent}
\end{equation}
for each $N\in\mathbb{N},\,s\in\mathbb{R}$. Plugging the above expansion
into the Helffer-Sjöstrand formula then gives 
\begin{equation}
\triangle\varphi\left(\triangle\right)-\sum_{j=0}^{N}k^{-j/r}\mathtt{C}_{j}^{\varphi}=O_{H_{\textrm{loc}}^{s}\rightarrow H_{\textrm{loc}}^{s+2}}\left(k^{-\left(N+1\right)/r}\right)\label{eq:Bochner functional expansion}
\end{equation}
$\forall N\in\mathbb{N}$ and for some $k$-independent $\mathtt{C}_{j}^{\varphi}\in L^{2}\left(\mathbb{R}^{n-1}\times\mathbb{R}^{n-1}\right)$,
$j=0,1,\ldots$\,. A similar argument as \prettyref{eq: regularized expansion},
replacing \prettyref{eq:resolvent expansion Bochner resolvent} by
the resolvent expansion for $\left(\triangle+1\right)^{-M}\left(\triangle-z\right)^{-1}$,
shows that the last expansion above is valid in $C^{l}\left(\mathbb{R}^{n-1}\times\mathbb{R}^{n-1}\right)$,
$\forall l\in\mathbb{N}$. Hence plugging \prettyref{eq:Bochner functional expansion}
into \prettyref{eq: matrix coefficient in terms of model} finally
gives 
\begin{align*}
\left\langle \widetilde{\psi}_{k},\Delta_{k}P_{0}\widetilde{\psi}_{k}'\right\rangle -k^{2/r}\left(\sum_{j=0}^{N}c_{j}k^{-j/r}\right) & =O\left(k^{-\left(N-1\right)/r}\right)
\end{align*}
$\forall N\in\mathbb{N}$, with $\tilde{c}_{j}\coloneqq\left\langle \tilde{\psi},\mathtt{C}_{j}^{\varphi}\tilde{\psi}'\right\rangle $
as required. 
\end{proof}
The proof of \prettyref{thm: Weyl law =00003D000026 expansion} now
follows from the above and is summarized below.
\begin{proof}[Proof of \prettyref{thm: Weyl law =00003D000026 expansion}]
 As noted before, the first part of the theorem regarding the Weyl
law \prettyref{eq:Weyl law} follows from \prettyref{thm: local Weyl law}
by a Tauberian argument. 

For the second part of the theorem regarding the expansion for $\lambda_{0}\left(k\right)$,
note from \prettyref{eq: proj is iso.} that the low lying eigenvalues
of $\Delta_{k}$ are given by $\textrm{Spec}\left(\left.\Delta_{k}\right|_{E_{0}}\right)=\textrm{Spec}\left(\left.\Delta_{k}\right|_{P_{0}\tilde{E}_{0}}\right)$
for $k\gg0$. But since the matrix coefficients of $\left.\Delta_{k}\right|_{P_{0}\tilde{E}_{0}}$
were just shown to have an expansion in \prettyref{thm:quasimode MC expansion},
the expansion for the smallest eigenvalue $\lambda_{0}\left(k\right)$
now follows by an application of standard perturbation theory for
self-adjoint matrices as in \cite[Ch. 2.6]{Kato}.
\end{proof}
\begin{rem}
(Spectrum and abnormals) Our \prettyref{thm:First eigenvalue Bochner}
and \prettyref{thm: Weyl law =00003D000026 expansion} proved in this
section are generalizations of the results in \cite{Montgomery-95paper}.
This latter article seems to have been motivated by describing a correspondence
between the asymptotics of sR Laplace spectrum and the phenomenon
of singular or abnormal geodesies in sR geometry, and claims to have
achieved this goal. However our generalization of its results here
shows that this is not the case, as indeed the concentration of the
eigenfunction ultimately occurs on the locus $Y_{r}$ where the Hausdorff
dimension is maximized. And this in general has little if anything
to do with abnormals. As a reference for the first spectral study
of abnormals in sR geometry we refer instead to the recent article
\cite{Savale-QC} of the second author.
\end{rem}

\section{Kodaira Laplacian on tensor powers\label{subsec:Kodaira-Laplacian}}

In this final section, we shall prove the Bergman kernel expansion
in \prettyref{thm:Bergman kernel expansion}. Thus we now specialize
to the case when $Y$ is a complex Hermitian manifold with integrable
complex structure $J$. For the arguments of this section, we shall
further need to restrict to the two dimensional case, when $Y$ is
a Riemann surface (see Remark \prettyref{rem:good remark}). The metric
$g^{TY}$ is induced from the Hermitian metric on the complex tangent
space $T_{\mathbb{C}}Y=T^{1,0}Y$. Further $(L,h^{L})$, $(F,h^{F})$
are chosen to be a Hermitian, holomorphic line and vector bundles
respectively. We denote by $\nabla^{L}$, $\nabla^{F}$ the corresponding
Chern connections. The curvature $R^{L}$ of $\nabla^{L}$ is a $\left(1,1\right)$
form which is further assumed to be semipositive 
\begin{align}
R^{L}\left(w,\bar{w}\right) & \geq0,\quad\forall w\in T^{1,0}Y.\label{eq:semi-positivity-1}
\end{align}
We also assume as before that the curvature $R^{L}$ vanishes at finite
order at any point of $Y$. We note that semipositivity implies that
the order of vanishing $r_{y}-2\in2\mathbb{N}_{0}$ of the curvature
$R^{L}$ at any point $y$ is even. Semipositivity and finite order
of vanishing imply that there are points where the curvature is positive,
the set where the curvature is positive is in fact an open dense set.
Hence $\deg L=\int_{Y}\frac{i}{2\pi}R^{L}>0,$ so that $L$ is ample.

Denote by $\left(\Omega^{0,*}\left(X;F\otimes L^{k}\right);\bar{\partial}_{k}\right)$
the Dolbeault complex and define the Kodaira Laplace and Dirac operators
acting on $\Omega^{0,*}\left(X;F\otimes L^{k}\right)$ via
\begin{align}
\Box_{k}\coloneqq\frac{1}{2}\left(D_{k}\right)^{2} & =\bar{\partial}_{k}\bar{\partial}_{k}^{*}+\bar{\partial}_{k}^{*}\bar{\partial}_{k}\label{eq: Kodaira Laplace}\\
D_{k} & \coloneqq\sqrt{2}\left(\bar{\partial}_{k}+\bar{\partial}_{k}^{*}\right).\label{eq: Kodaira Dirac}
\end{align}

Clearly, $D_{k}$ interchanges while $\Box_{k}$ preserves $\Omega^{0,0/1}$.
We denote $D_{k}^{\pm}=\left.D_{k}\right|_{\Omega^{0,0/1}}$ and $\Box_{k}^{0/1}=\left.\Box_{k}\right|_{\Omega^{0,0/1}}$.
The Clifford multiplication endomorphism $c:TY\rightarrow\textrm{End}\left(\Lambda^{0,*}\right)$
is defined via $c\left(v\right)\coloneqq\sqrt{2}\left(v^{1,0}\wedge-i_{v^{0,1}}\right)$,
$v\in TY$, and extended multiplicatively to the entire exterior algebra
$\Lambda^{*}TY$.

Denote by $\nabla^{TY},\nabla^{T^{1,0}Y}$ the Levi-Civita and Chern
connections on the real and holomorphic tangent spaces as well as
by $\nabla^{T^{0,1}Y}$ the induced connection on the anti-holomorphic
tangent space. Denote by $\Theta$ the real $\left(1,1\right)$ form
defined by contraction of the complex structure with the metric $\Theta\left(.,.\right)=g^{TY}\left(J.,.\right)$.
This is clearly closed $d\Theta=0$, or $Y$ is Kähler, and the complex
structure is parallel $\nabla^{TY}J=0$ or $\nabla^{TY}=\nabla^{T^{1,0}Y}\oplus\nabla^{T^{1,0}Y}$.

With the induced tensor product connection on $\Lambda^{0,*}\otimes F\otimes L^{k}$
being denoted via $\nabla^{\Lambda^{0,*}\otimes F\otimes L^{k}}$,
the Kodaira Dirac operator \prettyref{eq: Kodaira Dirac} is now given
by the formula 
\begin{equation}
D_{k}=c\circ\nabla^{\Lambda^{0,*}\otimes F\otimes L^{k}}.\label{eq:spin-c Dirac}
\end{equation}

Next, we denote by $R^{F}$ the curvature of $\nabla^{F}$ and by
$\kappa$ the scalar curvature of $g^{TY}$. Define the following
endomorphisms of $\Lambda^{0,*}$ 
\begin{align}
\omega\left(R^{F}\right)\coloneqq R^{F}\left(w,\bar{w}\right)\bar{w}i_{\bar{w}}, & \qquad\quad\tau^{F}\coloneqq R^{F}\left(w,\bar{w}\right)\nonumber \\
\omega\left(R^{L}\right)\coloneqq R^{L}\left(w,\bar{w}\right)\bar{w}i_{\bar{w}}, & \qquad\quad\tau^{L}\coloneqq R^{L}\left(w,\bar{w}\right)\nonumber \\
\omega\left(\kappa\right)\coloneqq\kappa\bar{w}i_{\bar{w}},\qquad\quad\quad\label{eq:formulas Clifford-1}
\end{align}
in terms of an orthonormal section $w$ of $T^{1,0}Y$. The Lichnerowicz
formula for the above Dirac operator \cite[Thm. 1.4.7]{Ma-Marinescu}
simplifies for a Riemann surface and is given by 
\begin{align}
2\Box_{k} & =D_{k}^{2}=\left(\nabla^{\Lambda^{0,*}\otimes F\otimes L^{k}}\right)^{*}\nabla^{\Lambda^{0,*}\otimes F\otimes L^{k}}+k\left[2\omega\left(R^{L}\right)-\tau^{L}\right]+\left[2\omega\left(R^{F}\right)-\tau^{F}\right]+\frac{1}{2}\omega\left(\kappa\right).\label{eq:Lichnerowicz for.}
\end{align}

We now have the following. 
\begin{prop}
\label{prop:Dk spectral estimate} Let $Y$ be a compact Riemann surface,
$(L,h^{L})\to Y$ a semipositive line bundle whose curvature $R^{L}$
vanishes to finite order at any point. Let $(F,h^{F})\to Y$ be a
Hermitian holomorphic vector bundle. Then there exist constants $c_{1},c_{2}>0$,
such that 
\[
\left\Vert D_{k}s\right\Vert ^{2}\geq\left(c_{1}k^{2/r}-c_{2}\right)\left\Vert s\right\Vert ^{2}
\]
for all $s\in\Omega^{0,1}\left(Y;F\otimes L^{k}\right)$. 
\end{prop}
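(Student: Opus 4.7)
The strategy is to combine the Lichnerowicz formula \eqref{eq:Lichnerowicz for.} with the general spectral gap for the Bochner Laplacian (Proposition \ref{prop: gen. spectral gap}), using the semi-positivity \eqref{eq:semi-positivity-1} to discard the potentially dangerous curvature term on $(0,1)$-forms.

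First I would unwind the Clifford curvature endomorphism $\omega(R^L) = R^L(w,\bar w)\, \bar w \, i_{\bar w}$ on $\Lambda^{0,*}$ in complex dimension one. On $\Omega^{0,1}(Y;F\otimes L^k)$ the operator $\bar w \, i_{\bar w}$ acts as the identity (since $\Lambda^{0,1}$ is the line generated by $\bar w$), so $\omega(R^L) = \tau^L \cdot \mathrm{id}$. Hence on $(0,1)$-forms the factor $2\omega(R^L) - \tau^L$ in \eqref{eq:Lichnerowicz for.} simplifies to $\tau^L = R^L(w,\bar w)$, which is pointwise non-negative by the semi-positivity assumption \eqref{eq:semi-positivity-1}. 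The remaining zero-order endomorphisms $2\omega(R^F) - \tau^F + \tfrac12\omega(\kappa)$ are $k$-independent and uniformly bounded, so they contribute an $O(\|s\|^2)$ error.

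Substituting into \eqref{eq:Lichnerowicz for.} and pairing with $s\in\Omega^{0,1}(Y;F\otimes L^k)$ yields
\begin{equation*}
\|D_k s\|^2 \;\geq\; \bigl\langle (\nabla^{\Lambda^{0,*}\otimes F\otimes L^k})^*\nabla^{\Lambda^{0,*}\otimes F\otimes L^k} s,\, s\bigr\rangle \;+\; k\langle \tau^L s, s\rangle \;-\; C\|s\|^2.
\end{equation*}
The second term on the right is non-negative and may be dropped. The first term is exactly the Bochner Laplacian from \eqref{e:BL}, taken with the auxiliary Hermitian bundle $F$ replaced by $\Lambda^{0,1}Y\otimes F$ (equipped with its tensor-product metric and connection). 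Since neither $L$ nor its curvature has been changed, the integer $r$ controlling the order of vanishing is the same, and Proposition \ref{prop: gen. spectral gap} applied in this setting gives
\begin{equation*}
\bigl\langle (\nabla^{\Lambda^{0,*}\otimes F\otimes L^k})^*\nabla^{\Lambda^{0,*}\otimes F\otimes L^k} s,\, s\bigr\rangle \;\geq\; \bigl(c_1 k^{2/r} - c_2\bigr)\|s\|^2.
\end{equation*}
Combining the two displayed inequalities (and adjusting the constant $c_2$) gives the claimed estimate.

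The only real step is the sign computation on $(0,1)$-forms; this is also the reason the statement is restricted to Riemann surfaces, since in higher complex dimension $2\omega(R^L) - \tau^L$ need not be non-negative on $\Omega^{0,q}$ for $q\geq 1$ under mere semi-positivity. Everything else is a direct application of the already established sub-elliptic gap \eqref{eq:subelliptic estimate}–Proposition \ref{prop: gen. spectral gap}, so no further obstacle is anticipated.
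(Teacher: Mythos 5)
Your proof is correct and follows essentially the same route as the paper: compute that $2\omega(R^L)-\tau^L$ acts as $\tau^L\geq 0$ on $(0,1)$-forms, drop this non-negative term and the bounded $k$-independent endomorphisms in the Lichnerowicz formula, and apply Proposition~\ref{prop: gen. spectral gap} to the remaining Bochner Laplacian on $\Lambda^{0,1}\otimes F\otimes L^k$. The concluding remark about why the argument is confined to complex dimension one also matches the paper's Remark~\ref{rem: good remark}.
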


\begin{proof}
Writing $s=\left|s\right|\bar{w}\in\Omega^{0,1}\left(Y;F\otimes L^{k}\right)$
in terms of a local orthonormal section $\bar{w}$ gives 
\begin{equation}
\left\langle \left[2\omega\left(R^{L}\right)-\tau^{L}\right]s,s\right\rangle =R^{L}\left(w,\bar{w}\right)\left|s\right|^{2}\geq0\label{eq: semipositivity curvature estimate}
\end{equation}
from \prettyref{eq:semi-positivity-1}, \prettyref{eq:formulas Clifford-1}.
This gives 
\begin{align*}
\left\langle D_{k}^{2}s,s\right\rangle  & =\left\langle \left[\left(\nabla^{\Lambda^{0,*}\otimes F\otimes L^{k}}\right)^{*}\nabla^{\Lambda^{0,*}\otimes F\otimes L^{k}}+k\left[2\omega\left(R^{L}\right)-\tau^{L}\right]+\left[2\omega\left(R^{F}\right)-\tau^{F}\right]+\frac{1}{2}\omega\left(\kappa\right)\right]s,s\right\rangle \\
 & \geq\left\langle \left(\nabla^{\Lambda^{0,*}\otimes F\otimes L^{k}}\right)^{*}\nabla^{\Lambda^{0,*}\otimes F\otimes L^{k}}s,s\right\rangle -c_{0}\left\Vert s\right\Vert ^{2}\\
 & \geq\left(c_{1}k^{2/r}-c_{2}\right)\left\Vert s\right\Vert ^{2}
\end{align*}
from \prettyref{thm:First eigenvalue Bochner}, \prettyref{eq:Lichnerowicz for.}
and \prettyref{eq: semipositivity curvature estimate}. 
\end{proof}
We now derive as a corollary a spectral gap property for Kodaira Dirac
and Laplace operators $D_{k}$, $\Box_{k}$ corresponding to \prettyref{thm:First eigenvalue Bochner}. 
\begin{cor}
\label{cor: spectral gap Dirac} Under the hypotheses of Proposition
\prettyref{prop:Dk spectral estimate} there exist constants $c_{1},c_{2}>0$,
such that $\textrm{Spec}\left(\Box_{k}\right)\subset\left\{ 0\right\} \cup\left[c_{1}k^{2/r}-c_{2},\infty\right)$
for each $k$. Moreover, $\ker D_{k}^{-}=0$ and $H^{1}\left(Y;F\otimes L^{k}\right)=0$
for $k$ sufficiently large. 
\end{cor}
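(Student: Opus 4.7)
The plan is to derive all three claims from Proposition \ref{prop:Dk spectral estimate} together with the standard supersymmetric relation between $\Box_k^0$ and $\Box_k^1$ coming from the Dirac formulation \eqref{eq: Kodaira Dirac}. Since $D_k$ exchanges $\Omega^{0,0}$ and $\Omega^{0,1}$, we have $\Box_k^0 = \tfrac{1}{2}D_k^- D_k^+$ and $\Box_k^1 = \tfrac{1}{2}D_k^+ D_k^-$, with $(D_k^\pm)^* = D_k^\mp$.

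First I would treat the $(0,1)$-part. For any $s \in \Omega^{0,1}(Y;F\otimes L^k)$, Proposition \ref{prop:Dk spectral estimate} gives
\[
\langle \Box_k^1 s, s\rangle \;=\; \tfrac{1}{2}\|D_k^- s\|^2 \;=\; \tfrac{1}{2}\|D_k s\|^2 \;\geq\; \tfrac{1}{2}\bigl(c_1 k^{2/r}-c_2\bigr)\|s\|^2,
\]
so $\operatorname{Spec}(\Box_k^1) \subset [\tfrac{1}{2}(c_1 k^{2/r}-c_2),\infty)$. For $k$ sufficiently large the lower bound is strictly positive, whence $\ker D_k^- = 0$ and therefore $\ker \Box_k^1 = 0$. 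By Hodge theory this identifies $H^1(Y;F\otimes L^k) \cong \ker\Box_k^1 = 0$ for large $k$, giving the last two claims.

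Next I would transfer the gap to $\Box_k^0$ via the standard fact that $\Box_k^0$ and $\Box_k^1$ have identical nonzero spectra. Concretely, if $\Box_k^0 s = \lambda s$ with $\lambda > 0$, then $\|D_k^+ s\|^2 = 2\lambda\|s\|^2 > 0$ and $\Box_k^1(D_k^+ s) = \tfrac{1}{2}D_k^+ D_k^- D_k^+ s = D_k^+ \Box_k^0 s = \lambda D_k^+ s$, so $\lambda \in \operatorname{Spec}(\Box_k^1)$. Combining with the previous step, any nonzero eigenvalue of $\Box_k^0$ lies in $[\tfrac{1}{2}(c_1 k^{2/r}-c_2),\infty)$. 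Since $\operatorname{Spec}(\Box_k) = \operatorname{Spec}(\Box_k^0)\cup\operatorname{Spec}(\Box_k^1)$, after relabeling the constants we obtain
\[
\operatorname{Spec}(\Box_k) \;\subset\; \{0\} \cup [c_1 k^{2/r}-c_2,\infty),
\]
as required.

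There is essentially no obstacle: the content of the corollary is that the a priori estimate of Proposition \ref{prop:Dk spectral estimate}, which only uses the Bochner gap of Proposition \ref{prop: gen. spectral gap} and semipositivity through the Lichnerowicz identity \eqref{eq:Lichnerowicz for.}, automatically propagates from $\Omega^{0,1}$ to the full Kodaira Laplacian via the Dirac supersymmetry. The only point requiring a moment's care is keeping track of the constants in the large-$k$ regime so that $c_1 k^{2/r}-c_2 > 0$ is used for the vanishing $H^1 = 0$, while the spectral gap statement itself holds for every $k$ after enlarging $c_2$ (or reducing $c_1$) to absorb finitely many low values of $k$.
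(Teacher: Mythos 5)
Your proposal is correct and follows the same route as the paper's proof: establish the spectral gap for $\Box_k^1$ directly from Proposition \prettyref{prop:Dk spectral estimate}, deduce $\ker D_k^- = 0$ and $H^1(Y;F\otimes L^k)=0$ from it, and then transfer the gap to $\Box_k^0$ via the McKean--Singer/supersymmetry isomorphism of nonzero eigenspaces of $D_k^2\vert_{\Omega^{0,0/1}}$. You have simply written out explicitly the intertwining identity $\Box_k^1 D_k^+ = D_k^+ \Box_k^0$ that the paper cites as ``McKean--Singer,'' so the two arguments are identical in substance.
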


\begin{proof}
From Proposition \prettyref{prop:Dk spectral estimate}, it is clear
that 
\begin{equation}
\textrm{Spec}\left(\Box_{k}^{1}\right)\subset\left[c_{1}k^{2/r}-c_{2},\infty\right)\label{eq:spectral gap Kodaira}
\end{equation}
for some $c_{1},c_{2}>0$ giving the second part of the corollary.
Moreover, the eigenspaces of $\left.D_{k}^{2}\right|_{\Omega^{0,0/1}}$
with non-zero eigenvalue being isomorphic by Mckean-Singer, the first
part also follows. 
\end{proof}
The vanishing $H^{1}\left(Y;F\otimes L^{k}\right)=0$ for $k$ sufficiently
large also gives
\begin{align}
\textrm{dim }H^{0}\left(Y;F\otimes L^{k}\right)=\chi\left(Y;F\otimes L^{k}\right) & =k\left[\textrm{rk}\left(F\right)\int_{Y}c_{1}\left(L\right)\right]+\int_{Y}c_{1}\left(F\right)+1-g,\label{eq: hol. Euler characteristic}
\end{align}
by Riemann-Roch, with $\chi\left(Y;F\otimes L^{k}\right)$, $ch\left(F\otimes L^{k}\right)$,
$\textrm{Td}\left(Y\right)$, $g$ denoting the holomorphic Euler
characteristic, Chern character, Todd genus and genus of $Y$ respectively. 
\begin{rem}
\label{rem:good remark} Our proof of the last two results Proposition
\prettyref{prop:Dk spectral estimate} and Corollary \prettyref{cor: spectral gap Dirac}
follows \cite{Guillemin-Uribe88,MaMarinescu2002} from the positive
case. In the semipositive case however the proof only works on a Riemann
surface, since in higher dimensions there are more components to the
$\left[2\omega\left(R^{L}\right)-\tau^{L}\right]$ term \prettyref{eq: semipositivity curvature estimate}
in the Lichnerowicz formula \prettyref{eq:Lichnerowicz for.} which
semipositivity is insufficient to control. 

Indeed, Donnelly has shown a counterexample to the existence of a
spectral gap for semipositive line bundles in higher dimensions \cite{Donnelly2003}.
In the same paper \cite[Cor. 3.3]{Donnelly2003}, Donnelly has also
observed that on a Riemann surface the Kodaira Laplacian satisfies
an $O\left(1\right)$ spectral gap: $\textrm{Spec}\left(\Box_{k}\right)\subset\left\{ 0\right\} \cup\left[c,\infty\right)$,
for some $c>0$, by using its equivalence with the closed range hypothesis
for the Kohn Laplacian $\Box_{b}$ on the unit circle. It is however
crucial for our proof of the Bergman kernel expansion \prettyref{thm:Bergman kernel expansion}
that the size of the spectral gap is $O\left(k^{2/r}\right)$, as
in Corollary \prettyref{cor: spectral gap Dirac}, or that it is at
least growing with $k$.
\end{rem}

\subsection{\label{subsec:Bergman-kernel exp.}Bergman kernel expansion}

We now investigate the asymptotics of the Bergman kernel. Recall that
this is the Schwartz kernel $\Pi_{k}\left(y_{1},y_{2}\right)$ of
the projector onto the kernel of $\Box_{k}$ with respect to the $L^{2}$
inner product given by the metrics $g^{TY}$, $h^{F}$ and $h^{L}$.
Alternately, if $\left\{ s_{j}\right\} _{j=1}^{N_{k}^{0}}$ denotes
an orthonormal basis of eigensections of $H^{0}\left(X;F\otimes L^{k}\right)$
then 
\begin{equation}
\Pi_{k}\left(y_{1},y_{2}\right)\coloneqq\sum_{j=1}^{N_{k}}s_{j}\left(y_{1}\right)\otimes s_{j}\left(y_{2}\right)^{*}.\label{eq: Bergman kernel}
\end{equation}
We wish to describe the asymptotics of $\Pi_{k}\left(y,y\right)$
along the diagonal in $Y\times Y$.

Next as in \prettyref{subsec:First-eigenvalue expansion}, we fix
a geodesic coordinate system centered at $y\in Y$. By using parallel
transport of an orthonormal basis $\left\{ w\right\} $, $\left\{ l\right\} $,
$\left\{ f_{j}\right\} _{j=1}^{\textrm{rk}\left(F\right)}$ for $T_{y}^{1,0}Y$,
$L_{y}$ , $F$ with respect to $\nabla^{T^{1,0}Y}$, $\nabla^{L}$,
$\nabla^{F}$ respectively we obtain a local orthonormal trivialization
for the corresponding bundles over a geodesic ball $B_{2\varrho}\left(y\right)$.
In this frame and coordinate system, the connection on the tensor
product $\nabla^{\Lambda^{0,*}\otimes F\otimes L^{k}}$ again has
a similar expression as \prettyref{eq:connection tensor product}. 

We now define a modified frame $\left\{ \tilde{w}\right\} $ on $T^{1,0}\mathbb{R}^{2}$
which agrees with $\left\{ w\right\} $ on $B_{\varrho}\left(y\right)$
and with $\left\{ \frac{1}{\sqrt{2}}\left(\partial_{x_{1}}+i\partial_{x_{2}}\right)\right\} $
outside $B_{2\varrho}\left(y\right)$. Also define the modified metric
$\tilde{g}^{TY}$ and almost complex structure $\tilde{J}$ on $\mathbb{R}^{2}$
to be standard in this frame and hence agreeing with $g^{TY}$, $J$
on $B_{\varrho}\left(y\right)$. The Christoffel symbol of the corresponding
modified induced connection on $\Lambda^{0,*}$now satisfies $\tilde{a}^{\Lambda^{0,*}}=0$
outside $B_{2\varrho}\left(y\right)$.

Further we may as before define the modified connections $\tilde{\nabla}^{F},$
$\tilde{\nabla}^{L}$ \prettyref{eq:modified connection-1} as well
as the corresponding tensor product connection $\tilde{\nabla}^{\Lambda^{0,*}\otimes F\otimes L^{k}}$
which agrees with $\nabla^{\Lambda^{0,*}\otimes F\otimes L^{k}}$
on $B_{\varrho}\left(y\right)$. Clearly the curvature of the modified
connection $\tilde{\nabla}^{L}$ is given by $\tilde{R}^{L}$ \prettyref{eq:modified connection-1}.
This modified curvature can also be seen to be semipositive and vanishing
to order $r_{y}-2$ for $\varrho$ sufficiently small. We now define
the modified Kodaira Dirac operator on $\mathbb{R}^{2}$ by the similar
formula $\tilde{D}_{k}=c\circ\tilde{\nabla}^{\Lambda^{0,*}\otimes F\otimes L^{k}}$,
agreeing with $D_{k}$ on $B_{\varrho}\left(y\right)$ by \prettyref{eq:spin-c Dirac}.
This has a similar Lichnerowicz formula

\begin{align}
\tilde{D}_{k}^{2}=2\tilde{\Box}_{k}\coloneqq & \underbrace{\left(\tilde{\nabla}^{\Lambda^{0,*}\otimes F\otimes L^{k}}\right)^{*}\tilde{\nabla}^{\Lambda^{0,*}\otimes F\otimes L^{k}}}_{=\tilde{\Delta}_{k}}+k\left[2\omega\left(\tilde{R}^{L}\right)-\tilde{\tau}^{L}\right]+\left[2\omega\left(\tilde{R}^{F}\right)-\tilde{\tau}^{F}\right]+\frac{1}{2}\omega\left(\tilde{\kappa}\right)\label{eq:model Laplace}
\end{align}
with the adjoint being taken with respect to the metric $\tilde{g}^{TY}$
and corresponding volume form. Also the endomorphisms $\tilde{R}^{F},\tilde{\tau}^{F},\tilde{\tau}^{L}$
and $\omega\left(\tilde{\kappa}\right)$ are the obvious modifications
of \prettyref{eq:formulas Clifford-1} defined using the curvatures
of $\tilde{\nabla}^{F},\tilde{\nabla}^{L}$ and $\tilde{g}^{TY}$
respectively. The above \prettyref{eq:model Laplace} again agrees
with $\Box_{k}$ on $B_{\varrho}\left(y\right)$ while the endomorphisms
$\tilde{R}^{F},\tilde{\tau}^{F},\omega\left(\tilde{\kappa}\right)$
all vanish outside $B_{\varrho}\left(y\right)$. Being semi-bounded
below \prettyref{eq:model Laplace} is essentially self-adjoint. A
similar argument as Corollary \prettyref{cor: spectral gap Dirac}
gives a spectral gap 
\begin{equation}
\textrm{Spec}\left(\tilde{\Box}_{k}\right)\subset\left\{ 0\right\} \cup\left[c_{1}k^{2/r_{y}}-c_{2},\infty\right).\label{eq: spectral gap}
\end{equation}
To repeat some parts of the argument, first note that by construction
the localized Bochner Laplacian in \prettyref{eq:model Laplace} is
the rescaled model Laplacian $\tilde{\Delta}_{k}=k^{2/r}\mathscr{R}\Delta_{0}\mathscr{R}^{-1}$
on the complement of a compact ball $B_{\varrho}\left(y\right)^{c}$.
Using the global subelliptic estimate for the model Laplacian \prettyref{eq:model subelliptic estimate},
one obtains $c_{1},c_{2}>0$ such that 
\begin{equation}
\left\langle \tilde{\Delta}_{k}s,s\right\rangle \geq\left(c_{1}k^{2/r}-c_{2}\right)\left\Vert s\right\Vert ^{2}\label{eq:localized spectral gap}
\end{equation}
for each $s\in C_{c}^{\infty}\left(B_{\varrho}\left(y\right)^{c}\right)$
supported outside the ball. Combinig this with the local subelliptic
estimate on the compact ball $B_{\varrho}\left(y\right)$ one obtains
\prettyref{eq:localized spectral gap} for each $s\in C_{c}^{\infty}\left(\mathbb{R}^{2}\right)$
and hence for all $s$ in $\textrm{Dom}\left(\tilde{\Delta}_{k}\right)$
as an unbouded operator on $L^{2}$. The spectral gap \prettyref{eq: spectral gap}
for $\tilde{\Box}_{k}$ now again follows by the Lichnerwicz formula
as in the proof of \prettyref{cor: spectral gap Dirac}.

By elliptic regularity, the projector $\tilde{\Pi}_{k}$ from $L^{2}\left(\mathbb{R}^{2};\Lambda_{y}^{0,*}\otimes F_{y}\otimes L_{y}^{\otimes k}\right)$
onto $\ker\left(\tilde{\Box}_{k}\right)$ then has a smooth Schwartz
kernel with respect to the Riemannian volume of $\tilde{g}^{TY}$.

We are now ready to prove the Bergman kernel expansion \prettyref{thm:Bergman kernel expansion}. 
\begin{proof}[Proof of \prettyref{thm:Bergman kernel expansion}]
\label{bergman exp proof} First choose $\varphi\in\mathcal{S}\left(\mathbb{R}_{s}\right)$
satisfying $\varphi\left(0\right)=1$. For $c>0$, set $\varphi_{1}\left(s\right)=1_{\left[c,\infty\right)}\left(s\right)\varphi\left(s\right)$.
On account of the spectral gap Corollary \prettyref{cor: spectral gap Dirac},
and as $\varphi_{1}$ decays at infinity, we have 
\begin{align}
\varphi\left(\Box_{k}\right)-\Pi_{k} & =\varphi_{1}\left(\Box_{k}\right)\quad\textrm{ with }\nonumber \\
\left\Vert \Box_{k}^{a}\varphi_{1}\left(\Box_{k}\right)\right\Vert _{L^{2}\rightarrow L^{2}} & =O\left(k^{-\infty}\right)\label{eq: function approx. projector}
\end{align}
$\forall a\in\mathbb{N}$. Combining the above with semiclassical
Sobolev and elliptic estimates gives 
\begin{equation}
\left|\varphi\left(\Box_{k}\right)-\Pi_{k}\right|_{C^{l}\left(Y\times Y\right)}=O\left(k^{-\infty}\right),\label{eq:bergman vs schw.}
\end{equation}
$\forall l\in\mathbb{N}_{0}$. Next, we may write $\varphi\left(\Box_{k}\right)=\frac{1}{2\pi}\int_{\mathbb{R}}e^{i\xi\Box_{k}}\hat{\varphi}\left(\xi\right)d\xi$
via Fourier inversion. Since $\Box_{k}=\tilde{\Box}_{k}$ on $B_{\varrho}\left(y\right)$,
we may use a finite propagation argument as in \prettyref{eq: heat localization on diagonal}
to conclude 
\[
\varphi\left(\Box_{k}\right)\left(y',y\right)=\varphi\left(\tilde{\Box}_{k}\right)\left(y',0\right)+O\left(k^{-\infty}\right)
\]
for $y'\in B_{\frac{\varrho}{2}}\left(y\right)$. Next, since the
operator $\tilde{\Box}_{k}$ also satisfies a spectral gap \prettyref{eq: spectral gap},
similar arguments as above for the localized Kodaira Laplacian give
$\left\Vert \tilde{\Box}_{k}^{a}\varphi_{1}\left(\tilde{\Box}_{k}\right)\right\Vert _{L_{\textrm{loc}}^{2}\rightarrow L_{\textrm{loc}}^{2}}=O\left(k^{-\infty}\right)$.
And there after local elliptic regularity gives $\left|\varphi\left(\tilde{\Box}_{k}\right)-\tilde{\Pi}_{k}\right|_{C_{\textrm{loc}}^{l}\left(Y\times Y\right)}=O\left(k^{-\infty}\right)$
similar to \prettyref{eq:bergman vs schw.}. Thus we now have a localization
of the Bergman kernel 
\begin{align}
\Pi_{k}\left(.,y\right) & =O\left(k^{-\infty}\right),\quad\textrm{ on }B_{\varrho}\left(y\right)^{c}\nonumber \\
\Pi_{k}\left(.,y\right)-\tilde{\Pi}_{k}\left(.,0\right) & =O\left(k^{-\infty}\right),\quad\textrm{ on }B_{\varrho}\left(y\right).\label{eq:Bergman localization}
\end{align}
It thus suffices to consider the Bergman kernel of the model Kodaira
Laplacian \prettyref{eq:model Laplace} on $\mathbb{R}^{2}$.

Next, with the dilation $\delta_{k^{-1/r}}y=\left(k^{-1/r}y_{1},\ldots,k^{-1/r}y_{n-1}\right)$
as in Section \prettyref{subsec:First-eigenvalue expansion}, the
rescaled Kodaira Laplacian 
\begin{equation}
\boxdot\coloneqq k^{-2/r_{y}}\left(\delta_{k^{-1/r}}\right)_{*}\tilde{\Box}_{k}\label{eq:rescaled Dirac}
\end{equation}
satisfies 
\begin{align}
\varphi\left(\frac{\tilde{\Box}_{k}}{k^{2/r_{y}}}\right)\left(y,y'\right) & =k^{2/r_{y}}\varphi\left(\boxdot\right)\left(yk^{1/r_{y}},y'k^{1/r_{y}}\right)\label{eq: rescaling Schw kernel}
\end{align}
for $\varphi\in C_{c}^{\infty}\left(\mathbb{R}\right)$ as in \prettyref{eq: matrix coefficient in terms of model}.
Using a Taylor expansion via \prettyref{eq:modified connection-1},
the rescaled Kodaira Laplacian again has an expansion 
\begin{eqnarray}
\boxdot & = & \left(\sum_{j=0}^{N}k^{-j/r_{y}}\boxdot_{j}\right)+k^{-2\left(N+1\right)/r_{y}}\mathrm{E}_{N+1},\;\forall N,\label{eq: Taylor expansion Dirac}\\
\textrm{where each }\;\boxdot_{j} & = & a_{j;pq}\left(y\right)\partial_{y_{p}}\partial_{y_{q}}+b_{j;p}\left(y\right)\partial_{y_{p}}+c_{j}\left(y\right)\label{eq:operators in expansion}
\end{eqnarray}
is a $k$-independent self-adjoint, second-order differential operator
while each 
\begin{equation}
\mathrm{E}_{j}=\sum_{\left|\alpha\right|=N+1}y^{\alpha}\left[a_{j;pq}^{\alpha}\left(y;k\right)\partial_{y_{p}}\partial_{y_{q}}+b_{j;p}^{\alpha}\left(y;k\right)\partial_{y_{p}}+c_{j}^{\alpha}\left(y;k\right)\right]\label{eq: error operators}
\end{equation}
is a $k$-dependent self-adjoint, second-order differential operator
on $\mathbb{R}^{2}$ . Furthermore the functions appearing in \prettyref{eq:operators in expansion}
are again polynomials with degrees satisfying the same conditions
in \prettyref{eq:inequalities and restrictions on coefficients} and
\prettyref{eq:number of derivatives in coefficients}. While the coefficients
$a_{j;pq}^{\alpha}\left(y;k\right),b_{j;p}^{\alpha}\left(y;k\right),c_{j}^{\alpha}\left(y;k\right)$
of \prettyref{eq: error operators} are uniformly $C^{\infty}$ bounded
in $k$. Using \prettyref{eq:modified connection-1}, \prettyref{eq:model Bochner in coordinates},
\prettyref{eq: model Kodaira Laplace} and \prettyref{eq:model Lichnerowicz}
the leading term of \prettyref{eq: Taylor expansion Dirac} is computed
\begin{equation}
\boxdot_{0}=\Box_{y}\coloneqq\Box_{g^{TY},j_{y}^{r_{y}-2}R^{L},J^{TY}}\label{eq:leading term}
\end{equation}
in terms of the model Kodaira Laplacian on the tangent space $TY$
\prettyref{eq: model Kodaira Laplace}.

Next, we obtain an expansion for the right-hand side of \prettyref{eq: rescaling Schw kernel}
by a resolvent expansion for $\boxdot$ as in \prettyref{eq:Bochner functional expansion}.
Namely, we let $I_{j}\coloneqq\left\{ p=\left(p_{0},p_{1},\ldots\right)|p_{\alpha}\in\mathbb{N},\sum p_{\alpha}=j\right\} $
denote the set of partitions of the integer $j$ and define 
\begin{equation}
\mathtt{C}_{j}^{z}=\sum_{p\in I_{j}}\left(z-\boxdot_{0}\right)^{-1}\left[\Pi_{\alpha}\left[\boxdot_{p_{\alpha}}\left(z-\boxdot_{0}\right)^{-1}\right]\right].\label{eq: jth term kernel expansion}
\end{equation}
Then by repeated applications of the local elliptic estimate using
\prettyref{eq: Taylor expansion Dirac} we have 
\begin{equation}
\left(z-\mathrm{\boxdot}\right)^{-1}-\left(\sum_{j=0}^{N}k^{-j/r_{y}}\mathtt{C}_{j}^{z}\right)=O_{H_{\textrm{loc}}^{s}\rightarrow H_{\textrm{loc}}^{s+2}}\left(k^{-\left(N+1\right)/r_{y}}\left|\textrm{Im}z\right|^{-2Nr_{y}-2}\right),\label{eq: resolvent expansion}
\end{equation}
for each $N\in\mathbb{N},\,s\in\mathbb{R}$. Plugging the above into
the Helffer-Sjöstrand formula gives 
\begin{equation}
\varphi\left(\boxdot\right)-\sum_{j=0}^{N}k^{-j/r_{y}}\mathtt{C}_{j}^{\varphi}=O_{H_{\textrm{loc}}^{s}\rightarrow H_{\textrm{loc}}^{s+2}}\left(k^{-\left(N+1\right)/r_{y}}\right)\label{eq:functions Kodaira expansion}
\end{equation}
$\forall l,N\in\mathbb{N}_{0}$ and for some $k$-independent $\mathtt{C}_{j}^{\varphi}\in C^{\infty}\left(\mathbb{R}^{2}\times\mathbb{R}^{2}\right)$,
$j=0,1,\ldots$. The leading term $\mathtt{C}_{0}^{\varphi}=\varphi\left(\boxdot_{0}\right)=\varphi\left(\Box_{y}\right)$
is given in terms of the modal Kodaira Laplacian. Again a similar
argument as \prettyref{eq: regularized expansion}, replacing \prettyref{eq: resolvent expansion}
by the resolvent expansion for $\left(\mathrm{\boxdot}+1\right)^{-M}\left(z-\mathrm{\boxdot}\right)^{-1}$,
shows that \prettyref{eq:functions Kodaira expansion} is valid at
the level of kernels in $C^{l}\left(\mathbb{R}^{2}\times\mathbb{R}^{2}\right)$,
$\forall l\in\mathbb{N}$. Finally choosing $\varphi$ supported,
and equal to one, near $0$ gives \prettyref{eq:Bergmankernelexpansion}
from the spectral gap \prettyref{eq: spectral gap} as well as \prettyref{eq: rescaling Schw kernel}
and \prettyref{eq:functions Kodaira expansion}. The leading term
$c_{0}\left(y\right)=\mathtt{C}_{0}\left(0,0\right)=\Pi_{0}\left(0,0\right)\coloneqq\Pi^{\boxdot_{0}}\left(0,0\right)$
is seen to be the model Bergman kernel on the tangent space. See the
argument in \prettyref{sec:Model-operators} at the bottom of page
29 for the positivity $c_{0}\left(y\right)>0$. From this identification
of $c_{0}\left(y\right)$ with the model kernel one sees that it has
a locally smooth extension $c_{0,r_{y}}\left(y'\right)$ for $y'$
near $y$, depending only on the type $r_{y}$ at $y$. However such
an extension might have nothing to do with the Bergman kernel at points
$y'$ other than $y$. Finally, to show that there are no odd powers
of $k^{-j/r_{y}}$, one again notes that the operators $\boxdot_{j}$
\prettyref{eq:operators in expansion} change sign by $\left(-1\right)^{j}$
under $\delta_{-1}x\coloneqq-x$. Thus the Schwartz kernel for $\mathtt{C}_{j}^{z}$
\prettyref{eq: jth term kernel expansion} changes sign by $\left(-1\right)^{j}$
giving $\mathtt{C}_{j}^{z}\left(0,0\right)=0$ for $j$ odd. 
\end{proof}
We end by giving an example where semipositive bundles arise and where
the first term of the Bergman kernel expansion \prettyref{eq:Bergmankernelexpansion}
above can further be made explicit. 
\begin{example}
\label{exa:(Branched-coverings)}(Branched coverings) Consider $f:Y\rightarrow Y_{0}$
a branched covering of a Riemann surface $Y_{0}$ with branch points
$\left\{ y_{1},\ldots,y_{M}\right\} \subset Y$. The Hermitian holomorphic
line bundle on $Y$ is pulled back $\left(L,h^{L}\right)=\left(f^{*}L_{0},f^{*}h^{L_{0}}\right)$
from one on $Y_{0}$. If $\left(L_{0},h^{L_{0}}\right)$ is assumed
positive, then $\left(L,h^{L}\right)$ is semipositive with curvature
vanishing at the branch points. In particular, near a branch point
$y\in Y$ of local degree $\frac{r}{2}$ one may find holomorphic
geodesic coordinate such that the curvature is given by $R^{L}=\frac{r^{2}}{4}\left(z\bar{z}\right)^{r/2-1}R_{f\left(y\right)}^{L_{0}}+O\left(\left|z\right|^{r-1}\right)$.
The leading term of \prettyref{eq:Bergmankernelexpansion} is given
by the model Bergman kernel $\Pi^{\boxdot_{0}}\left(0,0\right)$ of
the operator $\boxdot_{0}=bb^{\dagger}$, $b^{\dagger}=2\partial_{\bar{z}}+a$,
$a=\frac{r}{4}z\left(z\bar{z}\right)^{r/2-1}R_{f\left(y\right)}^{L_{0}}$.
An orthonormal basis for $\textrm{ker}\left(\boxdot_{0}\right)$ is
then seen to be
\begin{align*}
s_{\alpha}\left(z\right) & \coloneqq\left(\frac{1}{2\pi}\frac{r}{\Gamma\left(\frac{2\left(\alpha+1\right)}{r}\right)}\left[R_{f\left(y\right)}^{L_{0}}\right]^{\frac{2\left(\alpha+1\right)}{r}}\right)^{1/2}z^{\alpha}e^{-\Phi},\quad\alpha\in\mathbb{N}_{0},\quad\textrm{with}\\
\Phi\left(z\right) & \coloneqq\frac{1}{4}\left(z\bar{z}\right)^{r/2}R_{f\left(y\right)}^{L_{0}}.
\end{align*}
Since $s_{\alpha}\left(0\right)=0$ for $\alpha\geq1$, the model
Bergman kernel at the origin is now computed
\[
c_{0}\left(y\right)=\Pi_{0}\left(0,0\right)\coloneqq\Pi^{\boxdot_{0}}\left(0,0\right)=\left|s_{0}\left(0\right)\right|^{2}=\frac{1}{2\pi}\frac{r}{\Gamma\left(\frac{2}{r}\right)}\left[R_{f\left(y\right)}^{L_{0}}\right]^{\frac{2}{r}}
\]
at the vanishing or branch point $y$.
\end{example}

\appendix

\section{\label{sec:Model-operators}Model operators}

Here we define certain model Bochner, Kodaira Laplacians and Dirac
operators acting on a vector space $V$. The Bochner Laplacian is
intrinsically associated to a triple $\left(V,g^{V},R^{V}\right)$
consisting of a metric $g^{V}$ and a non-vanishing tensor $0\neq R^{V}\in S^{r-2}V^{*}\otimes\Lambda^{2}V^{*}$,
$r\geq2$. While the Kodaira Laplacian depends on an additional complex
structure $J^{V}$ on $V$. Throughout the article, the vector space
$V=T_{y}Y$ is the tangent space of the manifold, with $g^{V}=g_{y}^{TY}$
the Riemannian metric, $J^{V}=J_{y}$ the complex structure and $R^{V}=j^{r-2}R_{y}^{L}$
the first non-vanishing jet of the auxiliary curvature $R^{L}$ at
a point $y\in Y$.

We say that the tensor $R^{V}$ is nondegenerate if the following
is satisfied
\begin{equation}
S^{r-s-2}V^{*}\otimes\Lambda^{2}V^{*}\ni i_{v}^{s}\left(R^{V}\right)=0,\,\forall s\leq r-2\,\implies T_{y}Y\ni v=0.\label{eq:non-degeneracy condition}
\end{equation}
Above $i^{s}$ denotes the $s$-fold contraction of the symmetric
part of $R^{V}$.

For $v_{1}\in V$, $v_{2}\in T_{v_{1}}V=V$, contraction of the antisymmetric
part, denoted by $\iota$, of $R^{V}$ gives $\iota_{v_{2}}R^{V}\in S^{r-2}V^{*}\otimes V^{*}$.
The contraction may then be evaluated $\left(\iota_{v_{2}}R^{V}\right)\left(v_{1}^{r}\right)$
at $v_{1}^{r}\coloneqq v_{1}^{\odot r}\otimes v_{1}$ for $v_{1}\in V$
and hence viewed as a homogeneous degree $r-1$ polynomial function
on $V$. The tensor $R^{V}$ now determines a one form $a^{R^{V}}\in\Omega^{1}\left(V\right)$
via 
\begin{equation}
a_{v_{1}}^{R^{V}}\left(v_{2}\right)\coloneqq\int_{0}^{1}d\rho\left(\iota_{v_{2}}R^{V}\right)\left(\left(\rho v_{1}\right)^{r}\right)=\frac{1}{r}\left(\iota_{v_{2}}R^{V}\right)\left(v_{1}^{r}\right),\label{eq:connection from tensot}
\end{equation}
which we may view as a unitary connection $\nabla^{R^{V}}=d+ia^{R^{V}}$
on a trivial Hermitian vector bundle $E$ of arbitrary rank over $V$.
The curvature of this connection is clearly $R^{V}$ now viewed as
a homogeneous degree $r-2$ polynomial function on $V$ valued in
$\Lambda^{2}V^{*}$. One now defines the model Bochner Laplacian,
intrinsically associated to the tuple $\left(V,g^{V},R^{V}\right)$,
via 
\begin{equation}
\Delta_{0}=\Delta_{g^{V},R^{V}}\coloneqq\left(\nabla^{R^{V}}\right)^{*}\nabla^{R^{V}}:C^{\infty}\left(V;E\right)\rightarrow C^{\infty}\left(V;E\right).\label{eq:model Bochner Laplacian}
\end{equation}
depending on the pair $\left(g^{V},R^{V}\right)$. An orthonormal
basis $\left\{ e_{1},e_{2},\ldots,e_{n}\right\} $, determines components
$R_{pq,\alpha}\coloneqq R^{V}\left(e^{\odot\alpha};e_{p},e_{q}\right)$,
$\alpha\in\mathbb{N}_{0}^{n-1}$, $\left|\alpha\right|=r-2$, as well
as linear coordinates $\left(y_{1},\ldots,y_{n}\right)$ on $V$.
The connection form in these coordinates is given by $a_{p}^{R^{V}}=\frac{i}{r}y^{q}y^{\alpha}R_{pq,\alpha}.$
While the model Laplacian \prettyref{eq:model Bochner Laplacian}
is given 
\begin{equation}
\Delta_{0}=-\sum_{q=1}^{n}\left(\partial_{y_{p}}+\frac{i}{r}y^{q}y^{\alpha}R_{pq,\alpha}\right)^{2}.\label{eq:model Bochner in coordinates}
\end{equation}
As in \prettyref{eq: Fourier decomposition Laplacian}, the above
may now be related to the nilpotent sR Laplacian on the product $S_{\theta}^{1}\times V$
given by 
\begin{equation}
\hat{\Delta}_{0}=\hat{\Delta}_{g^{V},R^{V}}\coloneqq-\sum_{q=1}^{n}\left(\partial_{y_{p}}+\frac{i}{r}y^{q}y^{\alpha}R_{pq,\alpha}\partial_{\theta}\right)^{2},\label{eq:nilpotent Laplacian}
\end{equation}
and corresponding to the sR structure $\left(S_{\theta}^{1}\times V,\ker\left(d\theta+a^{R^{V}}\right),\text{\ensuremath{\pi}}^{*}g^{V},\,d\theta\textrm{vol}g^{V}\right)$
where the sR metric corresponds to $g^{V}$ under the natural projection
$\pi:S_{\theta}^{1}\times V\rightarrow V$. Note that the above differs
from the usual nilpotent approximation of the sR Laplacian since it
acts on the product with $S^{1}$. Following \cite[Part III, Sec. 16]{Rothschild-Stein76},
the above satisfies a subelliptic estimate: there exists $C>0$ such
that 
\begin{equation}
\left\Vert s\right\Vert _{H^{1/r}}^{2}\leq C\left[\left\langle \hat{\Delta}_{0}s,s\right\rangle +\left\Vert s\right\Vert _{L^{2}}^{2}\right],\quad\forall s\in C_{c}^{\infty}\left(S_{\theta}^{1}\times V\right).\label{eq:model subelliptic estimate}
\end{equation}
As \prettyref{eq:Fourier mode heat kernel}, the heat kernels of \prettyref{eq:model Bochner Laplacian},
\prettyref{eq:nilpotent Laplacian} are now related 
\begin{equation}
e^{-t\Delta_{0}}\left(y,y'\right)=\int e^{-i\theta}e^{-t\hat{\Delta}_{0}}\left(y,0;y',\theta\right)d\theta.\label{eq: model heat ker. reln.}
\end{equation}

Next, assume that the vector space $V$ is of even dimension and additionally
is equipped with an orthogonal endomorphism $J^{V}\in O\left(V\right)$;
$\left(J^{V}\right)^{2}=-1$. This gives rise to a linear integrable
almost complex structure on $V$ , a decomposition $V\otimes\mathbb{C}=V^{1,0}\oplus V^{0,1}$
into $\pm i$ eigenspaces of $J$ and a Clifford multiplication endomorphism
$c:V\rightarrow\textrm{End}\left(\Lambda^{*}V^{0,1}\right)$. We further
assume that $R^{V}$ is a $\left(1,1\right)$ form with respect to
$J$ that is $S^{k}V^{*}\ni R^{V}\left(w_{1},w_{2}\right)=0$, $\forall w_{1},w_{2}\in V^{1,0}$.
The $\left(0,1\right)$ part of the connection form \prettyref{eq:connection from tensot}
then gives a holomorphic structure on the trivial Hermitian line bundle
$\mathbb{C}$ with holomorphic derivative $\bar{\partial}_{\mathbb{C}}=\bar{\partial}+\left(a^{V}\right)^{0,1}$.
One may now define the model Kodaira Dirac and Laplace operators,
intrinsically associated to the tuple $\left(V,g^{V},R^{V},J^{V}\right)$,
via 
\begin{align}
D_{0}=D_{g^{V},R^{V},J^{V}}\coloneqq & \sqrt{2}\left(\bar{\partial}_{\mathbb{C}}+\bar{\partial}_{\mathbb{C}}^{*}\right)\label{eq:model Kodaira Dirac}\\
\Box_{0}=\Box_{g^{V},R^{V},J^{V}}\coloneqq & \frac{1}{2}\left(D_{g^{V},R^{V},J^{V}}\right)^{2}\label{eq: model Kodaira Laplace}
\end{align}
acting on $C^{\infty}\left(V;\Lambda^{*}V^{0,1}\right)$. The above
\prettyref{eq:model Bochner Laplacian}, \prettyref{eq: model Kodaira Laplace}
are related by the Lichnerowicz formula 
\begin{equation}
\Box_{0}=\Delta_{0}+c\left(R^{V}\right)\label{eq:model Lichnerowicz}
\end{equation}
where $c\left(R^{V}\right)=\sum_{p<q}R_{pq}^{i_{1}\ldots i_{r-2}}y_{i_{1}\ldots}y_{i_{r-2}}c\left(e_{p}\right)c\left(e_{q}\right)$.
We may choose a complex orthonormal basis $\left\{ w_{j}\right\} _{j=1}^{m}$
of $V^{1,0}$ that diagonalizes the tensor $R^{V}$: $R^{V}\left(w_{i},\bar{w}_{j}\right)=\delta_{ij}R_{j\bar{j}}$;
$R_{i\bar{j}}\in S^{r-2}V^{*}$. This gives complex coordinates on
$V$ in which \prettyref{eq: model Kodaira Laplace} may be written
as 
\begin{align}
\Box_{0} & =\sum_{q=1}^{\textrm{dim}V/2}b_{j}b_{j}^{\dagger}+2\left(\partial_{z_{j}}a_{j}+\partial_{\bar{z}_{j}}\bar{a}_{j}\right)\bar{w}_{j}i_{\bar{w}_{j}}\label{eq:model Kodaira Lap.}\\
\textrm{where }\quad b_{j}\coloneqq-2\partial_{z_{j}}+\bar{a}_{j},\quad & b_{j}^{\dagger}=2\partial_{\bar{z}_{j}}+a_{j},\quad\textrm{ for }\quad a_{j}=\frac{1}{r}R_{j\bar{j}}z_{j},\nonumber 
\end{align}
with each $R_{j\bar{j}}\left(z\right)$, $1\leq j\leq\textrm{dim}V/2$,
being a real homogeneous function of order $r-2$.

Being symmetric with respect to the standard Euclidean density and
semi-bounded below, both $\Delta_{0}$ and $\Box_{0}$ are essentially
self-adjoint on $L^{2}$. We shall need the following information
regarding their spectrum. 
\begin{prop}
\label{prop:model spectra} There exists $c>0$ such that $\textrm{Spec}\left(\Delta_{0}\right)\subset\left[c,\infty\right)$.
For $R^{V}$ satisfying the non-degeneracy condition \prettyref{eq:non-degeneracy condition}
one has $\textrm{EssSpec}\left(\Delta_{0}\right)=\emptyset$. While
for $\textrm{dim}V=2$ with $R^{V}\left(w,\bar{w}\right)\geq0$ semipositive
one has $\textrm{Spec}\left(\Box_{0}\right)\subset\left\{ 0\right\} \cup\left[c,\infty\right).$ 
\end{prop}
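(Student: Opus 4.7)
The three parts of the proposition admit somewhat distinct arguments, and my plan is to use part (a) as input for part (c), while parts (a) and (b) will rest on the commutator identity $[\nabla_p,\nabla_q]=iR^V_{pq}$ and on the polynomial (hence trapping) structure of $a^{R^V}$.

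For part (a), the commutator identity immediately rules out zero as an $L^2$ eigenvalue: any $\psi$ with $\Delta_{g^V,R^V}\psi=0$ must satisfy $\nabla_p\psi=0$ for every $p$, whence $R^V_{pq}(y)\psi(y)\equiv 0$; since $R^V\not\equiv 0$, $\psi$ vanishes on a nonempty open set, and parallel transport along the bracket-generating family $\{\nabla_p\}$ propagates this to $\psi\equiv 0$. To upgrade strict positivity to a uniform gap $c>0$, I would apply partial Fourier transform in variables along which $a^{R^V}$ is polynomial, decomposing $\Delta_{g^V,R^V}$ as a direct integral of lower-dimensional Schr\"odinger operators with polynomial (and hence trapping) effective potentials, and verify that the fiberwise ground-state energies are uniformly bounded below. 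Alternatively, one can invoke the Helffer--Mohamed criterion for magnetic Schr\"odinger operators with polynomial coefficients, which applies under $R^V\neq 0$.

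For part (b), I would in fact prove the stronger statement that $\Delta_{g^V,R^V}$ has compact resolvent. By compactness of the unit sphere in $V$ together with the homogeneity of $j^{r-2}R^V$, the non-degeneracy hypothesis \eqref{eq:non-degeneracy condition} is equivalent to the coercivity estimate $|R^V(y)|\geq c_0 |y|^{r-2}$ for $|y|$ large. A local rescaling $\delta_{|y_0|^{-(r-2)/r}}$ centered at a point $y_0$ at infinity (entirely analogous to the dilation in Section 3 of the paper) transforms the magnetic Laplacian on a shrinking ball around $y_0$ into a rescaled model Bochner Laplacian whose nonvanishing jet has norm comparable to $|y_0|^{r-2}$; applying part (a) to this rescaled operator yields a local lower bound
\begin{equation*}
\langle\Delta_{g^V,R^V}\psi,\psi\rangle \geq c\,|y_0|^{2(r-2)/r}\|\psi\|^2
\end{equation*}
for $\psi$ supported in the corresponding ball. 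Combined with a partition of unity and Persson's characterization of the essential spectrum, the divergence of this local lower bound at infinity forces $\mathrm{EssSpec}(\Delta_{g^V,R^V})=\emptyset$.

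Part (c) then follows from part (a) via the Lichnerowicz formula \eqref{eq:model Lichnerowicz} combined with McKean--Singer. Since $\dim V=2$ and $R^V$ is semi-positive of type $(1,1)$, a direct computation (matching that behind \eqref{eq: semipositivity curvature estimate}) shows that the Clifford action $c(R^V)$ acts on the $(0,1)$-part of $\Lambda^* V^{0,1}$ as multiplication by a nonnegative polynomial $\tau=R^V(w,\bar w)\geq 0$. Restricting $\Box_{g^V,R^V,J^V}$ to $\Omega^{0,1}$ and using part (a),
\begin{equation*}
\Box_{g^V,R^V,J^V}\bigl|_{\Omega^{0,1}} = \Delta_{g^V,R^V} + \tau\cdot\mathrm{Id} \geq \Delta_{g^V,R^V} \geq c>0.
\end{equation*}
The Dirac operator $D_{g^V,R^V,J^V}$ exchanges $\Omega^{0,0}$ and $\Omega^{0,1}$, and its polar decomposition furnishes a unitary equivalence between $\bigl(\ker\Box^0\bigr)^\perp\subset\Omega^{0,0}$ and $\bigl(\ker\Box^1\bigr)^\perp\subset\Omega^{0,1}$ intertwining the respective restrictions of $\Box$; the spectral gap on $\Omega^{0,1}$ therefore transfers to the nonzero part of the spectrum on $\Omega^{0,0}$, yielding $\mathrm{Spec}(\Box_{g^V,R^V,J^V})\subset\{0\}\cup[c,\infty)$. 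The main obstacle is the quantitative version of part (a) when $R^V$ is degenerate: the commutator argument alone is qualitative, and producing a uniform constant $c>0$ requires the fiberwise Schr\"odinger analysis (or the appeal to Helffer--Mohamed) outlined above.
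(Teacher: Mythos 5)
There are two genuine gaps in your argument. The most serious is in part~(b): your claimed equivalence of the non-degeneracy condition \prettyref{eq:non-degeneracy condition} with the coercivity estimate $|R^V(y)|\geq c_0|y|^{r-2}$ is \emph{false}. Take $V=\mathbb{R}^2$, $r=4$, $R^V = y_1y_2\,dy_1\wedge dy_2$. Its symmetric part is $dy_1\odot dy_2$ and $i_v(dy_1\odot dy_2)=\frac{1}{2}(v_2\,dy_1 + v_1\,dy_2)\neq 0$ for $v\neq 0$, so $R^V$ is non-degenerate; yet $R^V(y)$ vanishes on both coordinate axes, and the coercivity fails on the unit circle. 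Non-degeneracy only guarantees that the \emph{order of vanishing} of the homogeneous polynomial $R^V$ drops strictly below $r-2$ at every $y\neq 0$, which is strictly weaker than pointwise nonvanishing. Your local rescaling at $y_0\to\infty$ implicitly assumes $R^V(y_0)\neq 0$; where that fails, the scaling exponent and lower bound both change, and patching this across the different vanishing orders is essentially the content of the paper's argument. For part~(a), the commutator identity excludes $0$ only as an eigenvalue, not from the spectrum; you flag this, but the two suggested fixes do not close the gap: separation of variables by partial Fourier transform would require $a^{R^V}$ to be independent of the dualized coordinates, whereas $a^{R^V}_p = \frac{1}{r}y^q y^\alpha R_{pq,\alpha}$ entangles all variables, and the Helffer--Mohamed criteria concern compact resolvent under a non-degeneracy hypothesis, not a uniform spectral gap when the magnetic field is merely $\not\equiv 0$ and allowed to vanish on a thin set.

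For comparison, the paper's proof of parts~(a) and~(b) runs through a single scaling identity: setting $\Delta_k := \Delta_{g^V,kR^V}$, the dilation $(\mathscr{R}u)(y) = u(k^{1/r}y)$ gives $\Delta_k = k^{2/r}\mathscr{R}\,\Delta_{g^V,R^V}\,\mathscr{R}^{-1}$, hence $\textrm{Spec}(\Delta_k) = k^{2/r}\,\textrm{Spec}(\Delta_{g^V,R^V})$ and likewise for the essential spectrum. Applying the circle-bundle subelliptic estimate as in \prettyref{prop: gen. spectral gap} to the homogeneous sR Laplacian \prettyref{eq:nilpotent Laplacian} on $V\times S^1_\theta$ and restricting to the $k$-th Fourier mode yields $\textrm{Spec}(\Delta_k)\subset[c_1k^{2/r}-c_2,\infty)$ with $k$-independent $c_1,c_2$; dividing by $k^{2/r}$ and letting $k\to\infty$ gives $c=c_1>0$. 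For the essential spectrum the subelliptic gain improves to $1/(r-1)$ on $C_c^\infty(V\setminus B_1(0))$ because the step drops away from the origin under non-degeneracy, and Persson's criterion combined with the scaling relation forces $\textrm{EssSpec}(\Delta_{g^V,R^V})=\emptyset$. Your part~(c), via the model Lichnerowicz formula \prettyref{eq:model Lichnerowicz} and the supersymmetry between $\Omega^{0,0}$ and $\Omega^{0,1}$, is essentially what the paper does (up to normalization).
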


\begin{proof}
The proof of the first part is similar to that of \prettyref{eq:asymptotic bounds for l0}.
Introduce the deformed Laplacian $\Delta_{k}\coloneqq\Delta_{g^{V},kR^{V}}$
obtained by rescaling the tensor $R^{V}$. From \prettyref{eq:model Bochner in coordinates}
$\Delta_{k}=k^{2/r}\mathscr{R}\Delta_{0}\mathscr{R}^{-1}$ are conjugate
under the rescaling $\mathscr{R}:C^{\infty}\left(V;E\right)\rightarrow C^{\infty}\left(V;E\right)$,
$\left(\mathscr{R}u\right)\left(x\right)\coloneqq u\left(yk^{1/r}\right)$
implying 
\begin{align}
\textrm{Spec}\left(\Delta_{k}\right) & =k^{2/r}\textrm{Spec}\left(\Delta_{0}\right)\nonumber \\
\textrm{EssSpec}\left(\Delta_{k}\right) & =k^{2/r}\textrm{EssSpec}\left(\Delta_{0}\right)\label{eq: rescaling of spectra}
\end{align}
By an argument similar to \prettyref{eq:asymptotic bounds for l0},
one has $\textrm{Spec}\left(\Delta_{k}\right)\subset\left[c_{1}k^{2/r}-c_{2},\infty\right)$
for some $c_{1},c_{2}>0$ for $R^{V}\neq0$. From here $\textrm{Spec}\left(\Delta_{0}\right)\subset\left[c,\infty\right)$
follows. Next, under the non-degeneracy condition, the order of vanishing
of the homogeneous curvature $R^{V}$ (of the homogeneous connection
$a^{R^{V}}$\prettyref{eq:connection from tensot}) is seen to be
maximal at the origin: $\textrm{ord}_{y}\left(R^{V}\right)<r-2$ for
$y\neq0$. Following a similar sub-elliptic estimate \prettyref{eq:local subelliptic estimate}
on $V\times S_{\theta}^{1}$ as in \prettyref{eq:asymptotic bounds for l0},
we have 
\[
k^{2/\left(r-1\right)}\left\Vert u\right\Vert ^{2}\leq C\left[\left\langle \Delta_{k}u,u\right\rangle +\left\Vert u\right\Vert _{L^{2}}^{2}\right],\quad\forall u\in C_{c}^{\infty}\left(V\setminus B_{1}\left(0\right)\right),
\]
holds on the complement of the unit ball centered at the origin. Combining
the above with Persson's characterization of the essential spectrum
\cite[Ch. 3]{Agmon-book-1982} 
\[
\textrm{EssSpec}\left(\Delta_{k}\right)=\sup_{R}\inf_{\substack{\left\Vert u\right\Vert =1\\
u\in C_{c}^{\infty}\left(V\setminus B_{R}\left(0\right)\right)
}
}\left\langle \Delta_{k}u,u\right\rangle ,
\]
we have $\textrm{EssSpec}\left(\Delta_{k}\right)\subset\left[c_{1}k^{2/\left(r-1\right)}-c_{2},\infty\right)$.
From here and using \prettyref{eq: rescaling of spectra}, $\textrm{EssSpec}\left(\Delta_{0}\right)=\emptyset$
follows.

For the final part, similarly set $\Box_{k}\coloneqq\Box_{g^{V},kR^{V},J^{V}}$
and note that $k^{2/r}\textrm{Spec}\left(\Box_{0}\right)=\textrm{Spec}\left(\Box_{k}\right)\subset\left\{ 0\right\} \cup\left[c_{1}k^{2/r}-c_{2},\infty\right)$
by an argument similar to Corollary \prettyref{cor: spectral gap Dirac}. 
\end{proof}
Next, the heat $e^{-t\Delta_{0}}$, $e^{-t\Box_{0}}$ and wave $e^{it\sqrt{\Delta_{0}}}$,
$e^{it\sqrt{\Box_{0}}}$ operators being well-defined by functional
calculus, a finite propagation type argument as in \prettyref{eq:heat exponential decay off-diagonal}
gives $\varphi\left(\Delta_{0}\right)\left(.,0\right)\in\mathcal{S}\left(V\right)$,
$\varphi\left(\Box_{0}\right)\left(.,0\right)\in\mathcal{S}\left(V\right)$
are of Schwartz class for $\varphi\in\mathcal{S}\left(\mathbb{R}\right)$.
When $\textrm{EssSpec}\left(\Delta_{0}\right)=\emptyset$ any eigenfunction
of $\Delta_{0}$ also lies in $\mathcal{S}\left(V\right)$. Finally,
under the hypothesis of \prettyref{prop:model spectra}, the Schwartz
kernel $\Pi_{0}\left(.,0\right)\in\mathcal{S}\left(V\right)$ of the
projector $\Pi_{0}=\Pi^{\Box_{0}}$ onto the kernel of $\Box_{0}$
is also seen to be of Schwartz class, on choosing $\varphi$ supported
close to the origin. 

The constant $a_{0}\coloneqq\Pi_{0}\left(0,0\right)$ is also the
leading term in the boundary expansion $\Pi_{D}\left(z,z\right)\sim a_{0}\left(-\rho\right)^{-2-\frac{2}{r}}$
for the Bergman kernel of a weakly pseudoconvex finite type domain
$D\coloneqq\left\{ \rho<0\right\} \subset\mathbb{C}^{2}$ as $z\rightarrow x'\in\partial D$
a point on its boundary \cite[Thm. 2]{HsiaoSavale-2022}. Here $r=r\left(x'\right)$
is the typer of the point on the boundary. In this case, \cite[Thm. 2]{Catlin89}
proved the lower bound $\Pi_{D}\left(z,z\right)\geq c\left(-\rho\right)^{-2-\frac{2}{r}}$
for some $c>0$. Thus $a_{0}>0$.

We now state another proposition regarding the heat kernel of $\Delta_{0}$.
Below we denote $\lambda_{0}\left(\Delta_{0}\right)\coloneqq\textrm{inf Spec}\left(\Delta_{0}\right)$
. 
\begin{prop}
\label{prop:large time heat =00003D000026 bottom of spec}For each
$\varepsilon>0$ there exist $t,R>0$ such that the integral of the
heat kernel satisfies
\[
\frac{\int_{B_{R}\left(0\right)}dx\left[\Delta_{0}e^{-t\Delta_{0}}\right]\left(x,x\right)}{\int_{B_{R}\left(0\right)}dx\,e^{-t\Delta_{0}}\left(x,x\right)}\leq\lambda_{0}\left(\Delta_{0}\right)+\varepsilon
\]
\end{prop}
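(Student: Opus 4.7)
The approach is spectral-theoretic. Write $\Delta := \Delta_{g^{V},R^{V}}$, $\lambda_{0} := \lambda_{0}(\Delta)$, $K_{t}(x,y) := e^{-t\Delta}(x,y)$, and let $P_{B_{R}}$ denote orthogonal projection onto $L^{2}(B_{R}(0))$. By pointwise smoothness of the heat kernel, $e^{-t\Delta/2}P_{B_{R}}$ is Hilbert--Schmidt and hence $P_{B_{R}}f(\Delta)P_{B_{R}}$ is trace class for any bounded Borel $f$ with $|f(\lambda)|\leq C e^{-s\lambda}$, $s>0$. The functional $\nu_{R}(f):=\operatorname{tr}(P_{B_{R}}f(\Delta)P_{B_{R}})$ then extends to a positive Borel measure supported in $[\lambda_{0},\infty)$, and the spectral theorem identifies
\[
N(t,R):=\int_{B_{R}}K_{t}(x,x)\,dx=\nu_{R}(e^{-t\lambda}),\qquad M(t,R):=\int_{B_{R}}[\Delta K_{t}](x,x)\,dx=\nu_{R}(\lambda e^{-t\lambda}),
\]
reducing the claim to a one-dimensional estimate on the measure $\nu_{R}$.

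To produce mass of $\nu_{R}$ near $\lambda_{0}$, I use that $C_{c}^{\infty}(V;E)$ is a form core for $\Delta$ together with the variational characterization of $\lambda_{0}$: for any $\delta\in(0,1)$ there exists $\psi\in C_{c}^{\infty}(V;E)$ with $\|\psi\|=1$, $\langle\Delta\psi,\psi\rangle<\lambda_{0}+(\delta/4)^{2}$, and $\operatorname{supp}\psi\subset B_{R}$ for some $R$. Markov's inequality for the probability measure $\mu_{\psi}$ (whose mean is $<\lambda_{0}+(\delta/4)^{2}$) then yields $\mu_{\psi}([\lambda_{0},\lambda_{0}+\delta/4])>1-\delta/4$. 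Completing $\psi$ to an orthonormal basis of $L^{2}(B_{R})$ shows $\nu_{R}\geq\mu_{\psi}$ on nonnegative Borel functions, whence
\[
N(t,R)\geq(1-\delta/4)\,e^{-t(\lambda_{0}+\delta/4)}.
\]

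For the ratio, I split $M(t,R)$ at $\lambda_{0}+\delta$: the low-frequency contribution is bounded by $(\lambda_{0}+\delta)N(t,R)$, while the high-frequency contribution is controlled via $\lambda e^{-t\lambda}\leq e^{-1}e^{-(t-1)\lambda}$ combined with $e^{-(t-1)\lambda}\leq e^{-\lambda}e^{-(t-2)(\lambda_{0}+\delta)}$ on $\lambda\geq\lambda_{0}+\delta$, giving $\int_{(\lambda_{0}+\delta,\infty)}\lambda e^{-t\lambda}\,d\nu_{R}\leq e^{-1}N(1,R)e^{-(t-2)(\lambda_{0}+\delta)}$. Combining these bounds,
\[
\frac{M(t,R)}{N(t,R)}\leq \lambda_{0}+\delta+C_{R,\delta}\,e^{-3t\delta/4}.
\]
Taking $t$ large enough to absorb the second term and setting $\delta=\varepsilon/2$ gives the claim. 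The main technical step is the measure-theoretic setup, especially the identification of the two integrals with $\nu_{R}(e^{-t\lambda})$ and $\nu_{R}(\lambda e^{-t\lambda})$ and the pointwise domination $\nu_{R}\geq\mu_{\psi}$; both follow from smoothness of $K_{t}$ and standard spectral calculus, after which the remainder is a quantitative Laplace-method estimate.
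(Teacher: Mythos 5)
Your proof is correct. It follows the same overall strategy as the paper's argument — split the spectral measure of the trace at a threshold slightly above $\lambda_{0}$, bound the low-frequency part trivially by $(\lambda_{0}+\delta)\cdot(\text{denominator})$, bound the high-frequency part by exponential decay in $t$, and close the argument by exhibiting a near-ground-state whose localization forces the denominator to be at least $\sim e^{-t(\lambda_{0}+\text{small})}$ — but the mechanism for producing that lower bound differs. The paper shifts to $P=\Delta-\lambda_{0}$, takes a quasimode $\psi_{\varepsilon}$ with $\|P\psi_{\varepsilon}\|\leq\varepsilon$ from the characterization of spectrum via approximate eigenvectors, shows via the spectral theorem that $\|\psi_{\varepsilon}-\Pi^{P}_{[0,2\varepsilon]}\psi_{\varepsilon}\|\leq\tfrac12$, and then truncates to a ball $B_{R_{\varepsilon}}$ carrying most of the mass of $\psi_{\varepsilon}$. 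You instead invoke the fact that $C_{c}^{\infty}(V;E)$ is a form core (which follows from the essential self-adjointness stated in the appendix) to get a \emph{compactly supported} near-minimizer of the Rayleigh quotient directly, and then use Markov's inequality on the spectral probability measure $\mu_{\psi}$ to concentrate mass in $[\lambda_{0},\lambda_{0}+\delta/4]$. Your route avoids the separate localization step because $\operatorname{supp}\psi\subset B_{R}$ is built into the choice of test function, at the cost of invoking the form-core/variational characterization; the paper's route is slightly more elementary spectrally (only needs $0\in\operatorname{Spec}(P)$) but has to produce the ball $B_{R_{\varepsilon}}$ by a separate argument. The explicit introduction of the measure $\nu_{R}(f)=\operatorname{tr}(P_{B_{R}}f(\Delta)P_{B_{R}})$ and the domination $\nu_{R}\geq\mu_{\psi}$ is a clean way to organize the inequalities; the paper works with Schwartz kernels directly, but the underlying operator inequalities are the same.

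One small point worth making explicit: the step $\nu_{R}\geq\mu_{\psi}$ "on nonnegative Borel functions" should first be established for nonnegative $f$ with $\nu_{R}(f)<\infty$ (via $\operatorname{tr}(T)\geq\langle T\psi,\psi\rangle$ for positive trace-class $T$ and unit $\psi\in L^{2}(B_{R})$), and then extended to general nonnegative Borel $f$ by monotone convergence; this is routine but the phrase "completing to an orthonormal basis" glosses over the extension. Also, you implicitly use $N(1,R)<\infty$ in the high-frequency bound — correct, since this is exactly $\operatorname{tr}(P_{B_{R}}e^{-\Delta}P_{B_{R}})$, finite by the Hilbert--Schmidt property you established at the start.
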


\begin{proof}
Setting $P\coloneqq\Delta_{0}-\lambda_{0}\left(\Delta_{0}\right)$
it suffices to show 
\[
\frac{\int_{B_{R}\left(0\right)}dx\left[Pe^{-tP}\right]\left(x,x\right)}{\int_{B_{R}\left(0\right)}dx\,e^{-tP}\left(x,x\right)}\leq\varepsilon
\]
for some $t,R>0$. With $\Pi_{\left[0,x\right]}^{P}$ denoting the
spectral projector onto $\left[0,x\right]$, we split the numerator
\[
\int_{B_{R}\left(0\right)}dx\left[Pe^{-tP}\right]\left(x,x\right)=\int_{B_{R}\left(0\right)}dx\left[\Pi_{\left[0,4\varepsilon\right]}^{P}Pe^{-tP}\right]\left(x,x\right)+\int_{B_{R}\left(0\right)}dx\left[\left(1-\Pi_{\left[0,4\varepsilon\right]}^{P}\right)Pe^{-tP}\right]\left(x,x\right).
\]
From $P\geq0$, $\Pi_{\left[0,4\varepsilon\right]}^{P}Pe^{-tP}\leq4\varepsilon e^{-tP}$
and $\left(1-\Pi_{\left[0,4\varepsilon\right]}^{P}\right)Pe^{-tP}\leq ce^{-3\varepsilon t}$,
$\forall t\geq1$, we may bound 
\begin{equation}
\frac{\int_{B_{R}\left(0\right)}dx\left[Pe^{-tP}\right]\left(x,x\right)}{\int_{B_{R}\left(0\right)}dx\,e^{-tP}\left(x,x\right)}\leq4\varepsilon+\frac{ce^{-3\varepsilon t}R^{n-1}}{\int_{B_{R}\left(0\right)}dx\,e^{-tP}\left(x,x\right)}\label{eq:breakup est.}
\end{equation}
$\forall R,t\geq1$. Next, as $0\in\textrm{Spec}\left(P\right)$ there
exists $\left\Vert \psi_{\varepsilon}\right\Vert _{L^{2}}=1$, $\left\Vert P\psi_{\varepsilon}\right\Vert _{L^{2}}\leq\varepsilon$.
It now follows that $\left\Vert \psi_{\varepsilon}-\Pi_{\left[0,2\varepsilon\right]}^{P}\psi_{\varepsilon}\right\Vert \leq\frac{1}{2}$
and hence 
\[
\begin{split}\frac{1}{2} & =-\frac{1}{4}+\int_{B_{R_{\varepsilon}}\left(0\right)}dx\left|\psi_{\varepsilon}\left(x\right)\right|^{2}\leq\int_{B_{R_{\varepsilon}}\left(0\right)}dx\left|\int dy\Pi_{\left[0,2\varepsilon\right]}^{P}\left(x,y\right)\psi_{\varepsilon}\left(y\right)\right|^{2}\\
 & \leq\int_{B_{R_{\varepsilon}}\left(0\right)}dx\left(\int dy\Pi_{\left[0,2\varepsilon\right]}^{P}\left(x,y\right)\Pi_{\left[0,2\varepsilon\right]}^{P}\left(y,x\right)\right)=\int_{B_{R_{\varepsilon}}\left(0\right)}dx\Pi_{\left[0,2\varepsilon\right]}^{P}\left(x,x\right),
\end{split}
\]
for some $R_{\varepsilon}>0$, using $\left(\Pi_{\left[0,2\varepsilon\right]}^{P}\right)^{2}=\Pi_{\left[0,2\varepsilon\right]}^{P}$
and Cauchy-Schwartz. This gives 
\[
\int_{B_{R_{\varepsilon}}\left(0\right)}dx\,e^{-tP}\left(x,x\right)\geq\frac{e^{-2\varepsilon t}}{2}\,,\quad t>1.
\]
Plugging this last inequality into \prettyref{eq:breakup est.} gives
\[
\frac{\int_{B_{R_{\varepsilon}}\left(0\right)}dx\left[Pe^{-tP}\right]\left(x,x\right)}{\int_{B_{R_{\varepsilon}}\left(0\right)}dx\,e^{-tP}\left(x,x\right)}\leq4\varepsilon+ce^{-\varepsilon t}R_{\varepsilon}^{n-1}
\]
from which the theorem follows on choosing $t$ large. 
\end{proof}

\section*{Declarations}

\subsection*{Conflict of interest }

On behalf of all authors, the corresponding author states that there
is no conflict of interest.

\subsection*{Data sharing }

Data sharing not applicable to this article as no datasets were generated
or analysed during the current study.

\bibliographystyle{siam}
\bibliography{biblio}

\end{document}